\newtheorem{theorem}{Theorem}[section]
\newtheorem{lemma}[theorem]{Lemma}
\newtheorem{proposition}[theorem]{Proposition}
\newtheorem{remark}[theorem]{Remark}
\newtheorem{assumption}[theorem]{Assumption}
\numberwithin{equation}{section}
\newcommand{\Norm}[1]{{\left\|{#1} \right\|}}
\newcommand{\NNorm}[1]{{\left\vert\kern-0.25ex\left\vert\kern-0.25ex\left\vert #1     \right\vert\kern-0.25ex\right\vert\kern-0.25ex\right\vert}}
\newcommand{\SemiNorm}[1]{{\left|{#1} \right|}}
\newcommand{\jump}[1]{\left[\!\left[#1\right]\!\right]}
\newcommand{\bbR}{\mathbb{R}}
\newcommand{\calT}{\mathcal{T}}
\newcommand{\calE}{\mathcal{E}}
\newcommand{\calM}{\mathcal{M}}
\newcommand{\e}{e}
\newcommand{\Rbb}{\mathbb R}
\newcommand{\Nbb}{\mathbb N}
\newcommand{\Pbb}{\mathbb P}
\newcommand{\tauh}{\calT_h}
\newcommand{\taun}{\tauh}
\newcommand{\Ecalh}{\calE_h}
\newcommand{\EcalhI}{\calE^I_h}
\newcommand{\EcalhB}{\calE^B_h}
\newcommand{\EcalEc}{\EcalE_c}
\newcommand{\EcalEs}{\EcalE_s}
\newcommand{\E}{K}
\newcommand{\hE}{h_\E}
\newcommand{\he}{h_\e}
\newcommand{\gammae}{\gamma_\e}
\newcommand{\Ie}{I_\e}
\newcommand{\elle}{\ell_\e}
\newcommand{\boldalpha}{\boldsymbol \alpha}
\newcommand{\nbf}{\mathbf n}
\newcommand{\nbfE}{\nbf_\E}
\newcommand{\nbfEp}{\nbf_{\E^+}}
\newcommand{\nbfEm}{\nbf_{\E^-}}
\newcommand{\nbfe}{\nbf_\e}
\newcommand{\Vh}{V_h}
\newcommand{\VhE}{\Vh(\E)}
\newcommand{\uh}{u_h}
\newcommand{\vh}{v_h}
\newcommand{\wh}{w_h}
\newcommand{\EcalE}{\mathcal E^\E}
\newcommand{\Ecalhc}{\mathcal E_{h,c}}
\newcommand{\Ecalhs}{\mathcal E_{h,s}}
\newcommand{\qtilde}{\widetilde q}
\newcommand{\qtildekmo}{\qtilde_{k-1}}
\newcommand{\Pitilde}{\widetilde \Pi}
\newcommand{\qn}{q_n}
\newcommand{\qk}{q_k}
\newcommand{\Pitildeze}{\Pitilde_{k-1}^{0,\e}}
\newcommand{\PizEkmt}{\Pi^{0,\E}_{k-2}}
\newcommand{\Pitildenabla}{\Pitilde_k^{\nabla,\E}}
\newcommand{\Pinabla}{\Pi_{k}^{\nabla,\E}}
\newcommand{\Pbbtilde}{\widetilde{\Pbb}}
\newcommand{\ah}{a_h}
\newcommand{\aE}{a^\E}
\newcommand{\ahE}{\ah^\E}
\newcommand{\SE}{S^\E}
\newcommand{\abf}{\mathbf a}
\newcommand{\bbf}{\mathbf b}
\newcommand{\upi}{u_\pi}
\newcommand{\vpi}{v_{\pi}}
\newcommand{\vIE}{v_I^\E}
\newcommand{\vpE}{v_p^\E}
\newcommand{\phiI}{\phi_I}
\newcommand{\vI}{v_I}
\newcommand{\nbfhat}{\widehat{\nbf}}
\newcommand{\ehat}{\widehat \e}
\newcommand{\nbfhatehat}{\nbfhat_{\ehat}}
\newcommand{\fh}{f_h}
\newcommand{\Ncalh}{\mathcal N_h}
\newcommand{\uI}{u_I}
\newcommand{\cpartialE}{c^{\partial\E}}
\newcommand{\s}{s}
\newcommand{\ord}{\eta}
\title{\large{The nonconforming virtual element method with curved edges}}
\author{\normalsize{
Lourenco Beir\~ao Da Veiga\thanks{Dipartimento di Matematica e Applicazioni, Università degli Studi di Milano-Bicocca, 20125 Milano, Italy,
\tt{lourenco.beirao@unimib.it, yi.liu@unimib.it, lorenzo.mascotto@unimib.it, alessandro.russo@unimib.it}}
\thanks{IMATI-CNR, 27100, Pavia, Italy},
Yi Liu\footnotemark[1]~\thanks{School of Mathematical Sciences, Nanjing Normal University, Nanjing 210023, China, \tt{200901005@njnu.edu.cn}},
Lorenzo Mascotto\footnotemark[1]~\footnotemark[2]~\thanks{Fakult\"at f\"ur Mathematik, Universit\"at Wien, 1090 Vienna, Austria},
Alessandro Russo\footnotemark[1]~\footnotemark[2]}}
\date{}
\begin{document}
\maketitle

\begin{abstract}
\noindent We introduce a nonconforming virtual element method for the Poisson equation
on domains with curved boundary and internal interfaces.
We prove arbitrary order optimal convergence
in the energy and~$L^2$ norms,
and validate the theoretical results with numerical experiments.
Compared to existing nodal virtual elements on curved domains,
the proposed scheme has the advantage that it can be designed in any dimension.

\medskip\noindent
\textbf{AMS subject classification}: 65N15; 65N30.

\medskip\noindent
\textbf{Keywords}: nonconforming virtual element method; polytopal mesh; curved domain; optimal convergence.
\end{abstract}

\section{Introduction} \label{section:introduction}

Partial differential equations are often posed on domains with curved boundaries and internal interfaces.
The geometric error between a curved interface/boundary
and a corresponding ``flat'' approximation
affects the accuracy of the standard finite element method,
leading to a loss of convergence for higher-order elements~\cite{Strang-Berger:1973,Thomee:1973}.
This phenomenon has been addressed in many different ways in the literature, 
the most classical being to employ isoparametric finite elements~\cite{Ergatoudis-Irons-Zienkiewicz:1968, Lenoir:1986},
which require a polynomial approximation of the curved boundary and a careful choice of the isoparametric nodes;
another notable approach, which applies to CAD domains, is that of the Isogeometric Analysis~\cite{Cottrell-Hughes-Bazilevs:2009}.

Both issues can be avoided employing curved virtual elements~\cite{BeiraodaVeiga-Russo-Vacca:2019}.
The virtual element method~\cite{BeiradaVeiga-Brezzi-Cangiani-Manzini-Marini-Russo:2013, BeiradaVeiga-Brezzi-Marini-Russo:2014}
was designed a decade ago as a generalization of the finite element method to a Galerkin method based on polytopal meshes.
Basis functions are defined as solutions to local partial differential problems with polynomial data.
An explicit representation of the basis functions is not required;
rather, the scheme is designed only based on a suitable choice of the degrees of freedom.

In~\cite{BeiraodaVeiga-Russo-Vacca:2019},
test and trial virtual element functions are defined (in 2D)
so as their restrictions on curved edges
are mapped polynomials.
Other variants were developed later.
In~\cite{BeiraodaVeiga-Brezzi-Marini-Russo:2020},
again focusing on the 2D case only,
virtual element functions over curved edges
are restrictions of polynomials.
In~\cite{Bertoluzza-Pennacchio-Prada:2019}, a boundary correction technique
tracing back to the pioneering work~\cite{Bramble-Dupont-Thomee:1972}
was generalized to the virtual element setting;
here, normal-directional Taylor expansions are used to correct function values on the boundary.
The gospel of~\cite{BeiraodaVeiga-Russo-Vacca:2019}
has been applied to the approximation of solutions to the wave equation in~\cite{Dassi-Fumagalli-Mazzieri-Scotti-Vacca}.
Mixed virtual elements on curved domains are analyzed in two and three dimensions in~\cite{Dassi-Fumagalli-Losapio-Scialo-Scotti-Vacca, Dassi-Fumagalli-Scotti-Vacca}.

Other polytopal element method have been designed for curved domains.
Amongst them, we recall
the extended hybridizable discontinuous Galerkin method~\cite{Gurkan-SalaLardies-Kronbichler-FernandezMandez:2016};
the unfitted hybrid high-order method~\cite{Burman-Ern:2019,Burman-Cicuttin-Delay-Ern:2021};
the hybrid high-order method for the Poisson~\cite{Botti-DiPietro:2018, Yemm:2022}
and (singularly perturbed) fourth order problems~\cite{Dong-Ern:2022};
the Trefftz-based finite element method~\cite{Anand-Ovall-Reynolds-Weisser:2020}.

\medskip

In this paper, we focus on the Poisson problem,
design a nonconforming virtual element method on curved domains and with internal curved interfaces,
prove arbitrary order optimal convergence estimates in the energy and $L^2$ norms,
and validate the theoretical results with numerical results.

The proposed scheme is a generalization of the standard nonconforming virtual element method to the case of curved boundaries and internal interfaces.
Indeed, when the curved boundaries happen to be straight, 
the proposed virtual space boils down to that in~\cite{AyusodeDios-Lipnikov-Manzini:2016}.
Compared to its conforming nodal version~\cite{BeiraodaVeiga-Russo-Vacca:2019},
in principle the nonconforming scheme can be designed and analyzed in any dimension at once.

The proposed method is based on the computation of a novel Ritz-Galerkin operator that,
due to computability reasons,
is different from the standard $H^1$ projection operator
typically employed in virtual elements.
A noteworthy challenge of the forthcoming analysis
resides in developing optimal approximation estimates for such a Ritz-Galerkin operator;
this result is interesting per se and could be borrowed
also by other methods handling curved boundaries/interfaces.

Even though the standard nonconforming virtual element
can be algebraically equivalent to the hybrid high-order method~\cite{Cockburn-DiPietro-Ern:2016}
for a particular choice of the stabilization,
the method presented in this paper differs from the hybrid-high order methods on curved domains available in the literature.

\paragraph*{Preliminary notation.}
We denote the usual Sobolev space
of order~$m$, $m\geq 0$,
on an open bounded Lipschitz domain~$D$ in~$\Rbb^d$, $d \in \Nbb$, by $H^m(D)$.
We endow it with the inner-product $(\cdot,\cdot)_{m,D}$,
the norm $\|\cdot\|_{m,D}$, and the seminorm $|\cdot|_{m,D}$.
Let $H_0^1(D)$ the subspace of~$H^1(D)$ of functions with zero trace over the boundary~$\partial D$ of~$D$.
When $m=0$, the space $H^0(D)$ is the space $L^2(D)$
of square integrable functions over~$D$.
In this case, $(\cdot,\cdot)_{0,D} = (\cdot,\cdot)_D$
denotes the standard $L^2$ inner-product.
Sobolev spaces of negative order can be defined by duality.
The notation $\langle\cdot,\cdot\rangle$ stands for the duality pairing~$H^{-\frac12} - H^{\frac12}$ on a given domain.

Further, we introduce the Sobolev spaces~$W^{m,\infty}(D)$, $m\in \Nbb$,
of functions having weak derivatives up to order~$m$, which are bounded almost everywhere in~$D$.
The case~$m=0$ coincides with the usual space~$L^\infty(D)$;
the spaces of noninteger order~$m \ge 0$, $m \not\in \Nbb$, are constructed, e.g., by interpolation theory.
The corresponding norm and seminorm are~$\Norm{\cdot}_{W^{m,\infty}(D)}$
and~$\SemiNorm{\cdot}_{W^{m,\infty}(D)}$.

\paragraph*{Model problem.}
Let $\Omega\subset\Rbb^d$, $d=2,3$,
be a Lipschitz domain with (possibly) curved boundary~$\partial\Omega$.
Given $f$ in~$L^2(\Omega)$ and~$g$ in~$H^{\frac12}(\partial 
\Omega)$,
we consider the following Poisson problem:
Find $u$ such that
\begin{equation}\label{model}
\begin{cases}
-\Delta u = f   &\text{in } \Omega,\\
u = g           &\text{on } \partial\Omega.
\end{cases}
\end{equation}
Introduce~$V_g=\{u\in H^1(\Omega):u_{|\partial\Omega} = g\}$,
$V:=H^1_0(\Omega)$, and the bilinear form
\[
a(u,v) :
= \int_\Omega\nabla u\cdot\nabla v\ \mathrm{d}\Omega 
\qquad \forall \ u,v\in H^1(\Omega).
\]
A variational formulation of~\eqref{model} reads as follows:
\begin{equation}\label{weak}
\begin{cases}
\text{Find }u \in V_g \text{ such that}\\
a(u,v)= (f,v)_{0,\Omega} \qquad \forall v\in V.
\end{cases}
\end{equation}
We assume that the boundary~$\partial \Omega$ is the union
a finite number of smooth curved edges/faces
$\{\Gamma_i\}_{i=1,\cdots,N}$, i.e.,
\[
\bigcup_{i=1}^N\Gamma_i = \partial\Omega.
\]
Each $\Gamma_i$
is of class $\mathcal{C}^{\ord}$, for an integer $\ord \geq 1$,
which will fixed in Assumption~\ref{assumption:eta} below:
if $d=2$,
there exists a given regular and invertible $\mathcal{C}^{\ord}$-parametrization
$\gamma_i:I_i\rightarrow\Gamma_i$ for $i = 1, \dots , N$, 
where $I_i:= [a_i, b_i]\subset \bbR$ is a closed interval;
if $d=3$, there exists a given regular and invertible $\mathcal{C}^{\ord}$-parametrization
$\gamma_i:F_i\rightarrow\Gamma_i$ for $i = 1, \dots , N$,
where~$F_i$ is a straight polygon.
The smoothness parameter~$\ord$ depends on the order of the numerical scheme
and will be specified later.

Since all the~$\Gamma_i$ can be treated analogously in the forthcoming analysis,
 we drop the index~$i$
and assume that~$\partial \Omega$ contains only one curved face~$\Gamma$.
To further simplify the presentation,
we focus on the two dimensional case
and postpone the discussion of the three dimensional case to Section~\ref{section:3D} below.
We further assume that $\gamma:[0,1] \to \Gamma$.

\begin{remark} \label{remark:internal-interfaces}
The forthcoming analysis can be extended to
the case of internal interfaces (and jumping coefficients) with minor modifications.
To simplify the presentation, we stick to the case of~$\Gamma$ being a curved boundary face;
however, we shall present numerical experiments for jumping coefficients
across internal curved interfaces.
\end{remark}

\paragraph*{Structure of the paper.}
In Section~\ref{section:meshes},
we introduce regular polygonal meshes,
and broken and nonconforming polynomial and Sobolev spaces.
In Section~\ref{section:VEM}, we design a novel nonconforming virtual element method for curved domain;
show its well posedness;
discuss its lack of polynomial consistency.
In Sections~\ref{section:stability} and~\ref{section:interpolation},
we prove stability and interpolation properties of the new virtual element functions.
Section~\ref{section:convergence} is devoted to the proof of the rate of convergence in the $H^1$ and $L^2$ norms.
Details on the 3D version of the method are discussed in Section~\ref{section:3D}.
In Section~\ref{section:numerical-esperiments}, we present numerical experiments that verify the theory established the previous sections.

\section{Meshes and broken spaces} \label{section:meshes}
In this section, we introduce regular polygonal meshes
with whom we associate broken polynomial and Sobolev spaces,
and nonconforming polynomial spaces.

\subsection{Mesh assumptions} \label{subsection:meshes}
Let~$\{\taun\}$ be a sequence of partitions of~$\Omega$ into polygons,
possibly with curved edges along the curved boundary.
We denote the diameter of each element~$\E$ by~$\hE$
and the mesh size function of~$\taun$ by~$h := \max_{\E \in \taun}\hE$.
Let~$\Ecalh$ be the set of edges in~$\taun$.
We denote the size of any edge~$\e$ by~$\he$,
where the size of a (possibly curved) edge is the distance between its two endpoints;
see Remark \ref{lengthofedges} below.
Let $\EcalhI$ and $\EcalhB$ be the sets of all interior and boundary edges in~$\taun$\;
with~$\Ecalhc$ and~$\Ecalhs$ denoting the sets
of curved and straight edges in~$\tauh$, respectively.

For each element $\E\in\taun$,
we denote the sets of its edges by~$\EcalE$,
which we split into straight~$\EcalEs$
and curved edges~$\EcalEc$, respectively.
We denote the set of elements containing at least one edge in~$\Ecalh^B$ by~$\taun^B$;
an element $\E$ in~$\taun^B$ may contain more than one edge in~$\Ecalh^B$.
With each element~$\E$, we associate the outward unit normal vector~$\nbfE$;
with each edge~$\e$, we associate a unit normal vector~$\nbfe$ out of the available two.

Henceforth, we demand the following regularity assumptions on the sequence~$\{\taun\}$:
there exists a positive constant~$\rho$ such that
\begin{itemize}
\item[\textbf{(G1)}] each element~$\E$ is star-shaped with respect to a ball of radius larger than or equal to~$\rho \hE$;
\item[\textbf{(G2)}] for each element~$\E$ and any of its (possibly curved) edges~$\e$,
$\he$ is larger than or equal to~$\rho \hE$.
\end{itemize}
Assumptions (\textbf{G1})--(\textbf{G2}) imply that
each element has a uniformly bounded number of edges.

We introduce a parametrization of the edges:
\begin{itemize}
\item for any straight edge~$\e$ with endpoints $\bm{x}_{\e}^1$
and $\bm{x}_{\e}^2$,
we introduce the parametrization $\gammae(t) =\dfrac{t}{\he} (\bm{x}_{\e}^2-\bm{x}_{\e}^1) + \bm{x}_{\e}^1$;
\item for any curved edge $\e$,
we introduce the parametrization $\gammae:\ \Ie \subset I :=[0,1] \rightarrow e$
as the restriction of the global parametrization
$\gamma:\ I \rightarrow \Gamma$ to the interval~$\Ie$.
\end{itemize}

\begin{remark}\label{lengthofedges}
Let the lenght of a curved edge $\elle =\int_{\Ie}\|\gamma'(t)\|\mathrm{d}t$.
Since $\gamma$ and~$\gamma^{-1}$ are fixed once and for all,
and are of class~$W^{1,\infty}$, the quantity $\he$ introduced above is comparable with~$\elle$.
Therefore in the following we shall simply refer to both quantities as ``length''.
\end{remark}

We shall write $x\lesssim y$ and $x\gtrsim y$ instead of $x\leq Cy$ and $x\geq Cy$, respectively,
for a positive constant $C$ independent of~$\taun$.
Moreover, $x\approx y$ stands for 
$x \lesssim b$ and~$b \lesssim a$ at once.
The involved constants will be written explicitly only when necessary.

The validity of (\textbf{G1})--(\textbf{G2})
guarantees that the constants in the forthcoming trace and inverse inequalities are uniformly bounded.

\subsection{Broken and nonconforming spaces} \label{subsection:broken-spaces}
Let~$\Pbb_n(\E)$, $n\in \Nbb$,
be the space of polynomials of maximum degree~$n$ over each element~$\E$;
we use the convention $\Pbb_{-1}(\E) = \{0\}$.
Given~$\bm{x}_{\E}$ the centroid of~$\E$,
we introduce a basis for the space~$\Pbb_n(\E)$ given by the set of scaled and shifted monomials
\[
\calM_n(\E) =
\left\{
\left(\dfrac{\bm{x}-\bm{x}_{\E}}{\hE}\right)^{\boldalpha}
\; \forall  \boldalpha \in \Nbb^2, \; \vert \boldalpha \vert \le n,\;\;
\forall \bm x \in \E \right\}.
\]
Here, $\bm{\alpha}$ denotes a multi-index~$\bm{\alpha}=(\alpha_1,\alpha_2)$.

Similarly, let~$\Pbb_n(\Ie)$, $n\in \Nbb$,
be the space of polynomials of maximum degree~$n$ over the interval~$\Ie$.
Given~${x}_{\Ie}$ and~$h_{\Ie}$ the midpoint and length of~$\Ie$,
we introduce a basis for the space~$\Pbb_n(\Ie)$ given by the set of scaled and shifted monomials
\[
\calM_n(\Ie) =
\left\{
\left(\dfrac{{x}-{x}_{\Ie}}{h_{\Ie}}\right)^{\alpha}
\; \forall  \alpha \in \Nbb, \;  \alpha \le n,\;\;
\forall x \in \Ie \right\}.
\]
Recalling that~$\gammae:\Ie\subset I \to \e$ denotes the parametrization of the edge~$\e$,
we consider the following mapped polynomial space and scaled monomial set:
\[
\begin{split}
\Pbbtilde_n(\e)      = \{\widetilde{q} = q\ \circ\ \gammae^{-1}:\ q\in {P}_n(\Ie)\},
\qquad
\widetilde{\calM}_n(\e)  = \{\widetilde{m} = m\ \circ\ \gammae^{-1}:\ m \in \calM_n(\Ie)\}.
\end{split}
\]
If~$\e$ is straight, then $\Pbbtilde_n(\e)$ and~$\widetilde{\calM}_n(\e)$
boil down to a standard polynomial space
and scaled monomial set, respectively.
For any~$s>0$, we introduce the broken Sobolev space over a mesh~$\taun$ as
\[
H^s(\taun) := \{ v\in L^2(\Omega):\, v_{|\E} \in H^s(\E)\quad \forall \E \in \taun \} ,
\]
and equip it with the broken norm and seminorm
\[
\Norm{v}_{s,h}^2 := 
\sum_{\E\in\taun}\Norm{v}_{s,\E}^2 ,
\qquad
\SemiNorm{v}_{s,h}^2 :=
\sum_{\E\in\taun}\SemiNorm{v}_{s,\E}^2 .
\]
We define the jump across the edge~$\e$ of any~$v$ in~$H^1(\taun)$ as
\[
\jump{v}:=
\begin{cases}
v_{|\E^+}\nbfEp +v_{|\E^-} \nbfEm   & \text{if~$\e\in\EcalhI$,  $\e \subset \partial\E^+ \cap \partial \E^-$ for given~$\E^+$, $\E^- \in \taun$} \\
v\nbfe                               & \text{if~$\e\in\EcalhB$, $\e \subset \E$ for a given~$\E \in \taun$.}
\end{cases}
\]
The nonconforming Sobolev space of order~$k$ over~$\taun$ is given as follows:
\[
H^{1,nc}(\taun,k) 
:= \left\{
v\in H^1(\taun) \middle|  
\int_{\e}\llbracket v \rrbracket\cdot\nbfe \ q{\mathrm{d}} s =0 ,\;
\forall q\in \Pbbtilde_{k-1}(\e),\ \forall e\in\Ecalh
\right\}.
\]
In the straight edges case,
this space coincides with the standard nonconforming Sobolev space in~\cite{AyusodeDios-Lipnikov-Manzini:2016}.

\section{The nonconforming virtual element method on curved polygons} \label{section:VEM}

In this section, we introduce the nonconforming virtual element method for the approximation of solutions to~\eqref{weak}.
In Section~\ref{subsection:ve-space},
we introduce the local and global virtual element spaces
and endow them with suitable sets of degrees of freedom (DoFs).
In Section~\ref{subsection:polynomial-projections-bf},
we discretize the bilinear form
by means of computable polynomial projectors and stabilizing bilinear forms.
With this at hand, we introduce the method in Section~\ref{subsection:VEM}.

\subsection{Nonconforming virtual element spaces}
\label{subsection:ve-space}
We define a local virtual element space of order $k$ in~$\Nbb$ on the (possibly curved) element~$\E$:
\begin{equation} \label{local-space}
\VhE
:= \left\{ \vh \in H^1(\E) 
\middle| \ \Delta \vh \in \Pbb_{k-2}(\E),\
\nbfE \cdot \nabla \vh \in \Pbbtilde_{k-1}(\e)\ 
\forall e \in \EcalE
\right\}.
\end{equation}
We have that $\Pbb_n(\E)_{|\e} = \Pbbtilde_n(\e)$ if $\e$ is a straight edge;
instead, if~$\e$ is a curved edge,
$\Pbb_0(\E)_{|\e} \subset\Pbbtilde_n(\e)$
but in general, $\Pbb_n(\E)_{|\e} \not\subset \Pbbtilde_n(\e)$.
This implies that on curved elements the space~$\VhE$ contains constant functions
but not the space~$\Pbb_k(\E)$.
\medskip

We can define the following sets of degrees of freedom
for the space~$\VhE$.

\begin{itemize}
\item on each edge~$\e$ of~$\E$, the moments
\begin{equation}\label{edge-dofs}
\bm{D}_{\e}^{i}(\vh) 
= |\e|^{-1} \int_\e \vh \widetilde{m}_i\ \mathrm{d}s\qquad
\forall \widetilde{m}_i \in \widetilde{\calM}_{k-1}(\e);
\end{equation}
\item the bulk moments
\begin{equation}\label{bulk-dofs}
\bm{D}_{\E}^{j}(\vh)
= |\E|^{-1} \int_\E \vh m_j\ \mathrm{d}K
\qquad  \forall m_j \in \calM_{k-2}(\E).
\end{equation}
\end{itemize}
In Figure~\ref{figure:dof}, we give a graphic representation for such linear functionals in the case $k=2$.

\begin{figure}[H]
\centering
{\scalefont{1}
\begin{tikzpicture}
\draw[very thick] (-1.95,0.9)--(-2.6,-1.0)--(-1.95,-2.6)--(1.3,-2.6)--(2,-0.9)--(1.5,0.6);
\draw[red,very thick] (1.5,0.6) arc (25:145:2);
\filldraw [black](-2.4,-0.43) circle [radius=4pt];
\filldraw [black](-2.2,0.2) circle [radius=4pt];
\filldraw [black](-2.38,-1.6) circle [radius=4pt];
\filldraw [black](-2.1,-2.2) circle [radius=4pt];
\filldraw [black](-0.9,-2.6) circle [radius=4pt];
\filldraw [black](0.4,-2.6) circle [radius=4pt];
\filldraw [black](1.5,-2.2) circle [radius=4pt];
\filldraw [black](1.8,-1.4) circle [radius=4pt];
\filldraw [black](1.85,-0.5) circle [radius=4pt];
\filldraw [black](1.65,0.1) circle [radius=4pt];
\filldraw [red](-0.9,1.65) circle [radius=4pt];
\filldraw [red](0.7,1.5) circle [radius=4pt];
\draw [orange,thick](-0.3,-0.5) circle [radius=4pt];
\end{tikzpicture}}
\caption{\small{Representation of the DoFs for $k=2$. The edge moments~\eqref{edge-dofs} are the black balls (for straight edges) and red balls (for curved edges);
the bulk moments~\eqref{bulk-dofs} are the internal orange circles.}}
\label{figure:dof}
\end{figure}
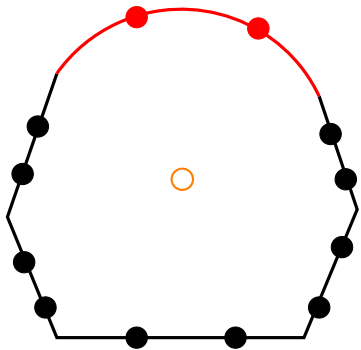

The following result generalizes~\cite[Lemma~$3.1$]{AyusodeDios-Lipnikov-Manzini:2016} to the case of curved elements.
\begin{lemma}
The sets of linear functionals~\eqref{edge-dofs} and~\eqref{bulk-dofs}
are a set of unisolvent DoFs for the space~$\VhE$.
\end{lemma}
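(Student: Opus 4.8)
The plan is to follow the standard virtual element unisolvence argument, adapted to the curved setting. First I would count dimensions. The number of edge moments \eqref{edge-dofs} is $k \cdot (\#\EcalE)$ since $\dim \widetilde{\calM}_{k-1}(\e) = k$, and the number of bulk moments \eqref{bulk-dofs} is $\dim \Pbb_{k-2}(\E) = \binom{k}{2}$. I would then argue that $\dim \VhE$ equals exactly this number by exhibiting the natural surjection. Specifically, any $\vh \in \VhE$ is uniquely determined by the triple consisting of its Laplacian $\Delta\vh \in \Pbb_{k-2}(\E)$, its boundary data $\vh|_{\partial\E}$, and the observation that a harmonic-type problem $-\Delta\vh = p_{k-2}$ in $\E$ with Neumann data $\nbfE\cdot\nabla\vh = \qtilde_{k-1}$ on each edge is solvable up to constants, with the compatibility condition $\int_\E p_{k-2} + \int_{\partial\E}\qtilde_{k-1} = 0$ linking the bulk and edge data. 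Counting: the edge normal derivatives contribute $k\cdot(\#\EcalE)$ parameters, the Laplacian contributes $\binom{k}{2}$, minus one for the compatibility constraint, plus one for the additive constant — these cancel, giving the same total as the DoFs.

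Next, for unisolvence it suffices to show that if $\vh \in \VhE$ has all DoFs in \eqref{edge-dofs} and \eqref{bulk-dofs} equal to zero, then $\vh = 0$. The key computation is the integration-by-parts identity
\[
\int_\E |\nabla \vh|^2 \,\mathrm{d}K
= -\int_\E \vh\, \Delta \vh \,\mathrm{d}K + \sum_{\e \in \EcalE}\int_\e \vh \,(\nbfE\cdot\nabla\vh)\,\mathrm{d}s.
\]
Since $\Delta\vh \in \Pbb_{k-2}(\E)$, the first term on the right vanishes because the bulk moments \eqref{bulk-dofs} against $\calM_{k-2}(\E)$ are zero. For the boundary term, on each edge $\e$ we have $\nbfE\cdot\nabla\vh \in \Pbbtilde_{k-1}(\e) = \operatorname{span}\widetilde{\calM}_{k-1}(\e)$, so this integral is a linear combination of the edge moments \eqref{edge-dofs}, which are all zero. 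Hence $\nabla\vh = 0$ on $\E$, so $\vh$ is constant. Finally, that constant is zero: if $k \ge 2$ it is killed by the bulk moment against $m_0 \equiv 1$; if $k=1$ there are no bulk moments, but $\widetilde{\calM}_0(\e)$ contains the constant $1$, so the edge moment $|\e|^{-1}\int_\e \vh\,\mathrm{d}s = \vh$ forces $\vh = 0$.

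The step I expect to require the most care is the dimension count establishing that $\dim\VhE$ equals the number of proposed DoFs — that is, that the map sending $\vh$ to its $(\Delta\vh, \{\nbfE\cdot\nabla\vh|_\e\})$ data is onto the affine subspace cut out by the single compatibility relation, and that its kernel is exactly the constants. In the straight-edge case this is \cite[Lemma~$3.1$]{AyusodeDios-Lipnikov-Manzini:2016}; the only genuinely new point here is that on a curved edge $\nbfE\cdot\nabla\vh$ is required to lie in the mapped polynomial space $\Pbbtilde_{k-1}(\e)$ rather than an honest polynomial space, and that the resulting Poisson--Neumann problem on the curved polygon $\E$ is still well posed (unique up to constants) — this follows from the Lax--Milgram theorem on $H^1(\E)/\Pbb_0(\E)$ together with the trace theorem, using the mesh regularity assumptions \textbf{(G1)}--\textbf{(G2)}, and does not depend on whether the edges are straight or curved. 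Once the dimension count is in place, the unisolvence follows immediately from the injectivity argument above, since an injective linear map between spaces of equal finite dimension is bijective.
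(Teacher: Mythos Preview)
Your proposal is correct and follows essentially the same route as the paper: integrate by parts, use the vanishing bulk and edge moments to kill both terms, conclude $\vh$ is constant, then use a moment against $1$ to kill the constant. The paper simply asserts the dimension count without justification, so your explicit Neumann-problem argument fills in a detail the paper omits; for the final step the paper uses the edge moment against $1$ for every $k$, whereas you split into $k\ge 2$ (bulk) and $k=1$ (edge), but both variants are fine.
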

\begin{proof}
The number of linear functionals in~\eqref{edge-dofs} and~\eqref{bulk-dofs} equals the dimension of the space~$\VhE$.
So, we only have to prove the unisolvence.
Let~$\vh$ in~$\VhE$ be such that
\begin{equation} \label{muvh}
\bm{D}_{\e}^{i}(\vh) = 0\quad \forall e\in\EcalE
\ \text{ and } \ \bm{D}_{\E}^{j}(\vh) = 0
\quad \forall i= 1,\cdots,k;\ j = 1,\cdots,k(k-1)/2.
\end{equation}
Then, it suffices to prove~$\vh = 0$.
To this aim, we integrate by parts, use~\eqref{muvh}, and obtain
\[
\int_\E|\nabla \vh|^2\ \mathrm{d} K
= \int_\E-\Delta \vh \vh\ \mathrm{d} K
    + \sum_{\e \in \EcalE} 
    \int_{\e} \nbfE \cdot \nabla \vh \vh \ \mathrm{d} s
= 0.
\]
We deduce~$\nabla \vh=0$ in~$\E$,
whence~$\vh$ is constant.
Thanks to~\eqref{muvh}, $\vh$ has zero average over each edge.
The assertion follows.
\end{proof}

The global nonconforming virtual element space  is constructed
by a standard coupling of the interface degrees of freedom~\eqref{edge-dofs}:
\begin{equation}\label{global-space}
\Vh(\taun)
:=
\left\{
\vh \in H^{1,nc}(\taun,k) \middle|
\vh{}_{|\E} \in \VhE \; \forall \E \in \taun \right\}.
\end{equation}
Further, we define nonconforming virtual element spaces with weakly imposed boundary conditions:
if~$g$ is in~$L^1(\partial \Omega)$,
\begin{equation}\label{Vkg}
\Vh^g(\taun)
:= \left\{
\vh \in \Vh(\taun) \middle| 
\int_\e (\vh-g) \qtildekmo = 0 \quad
\forall \qtildekmo \in \Pbbtilde_{k-1}(\e),\ \e\in\Ecalh^B
\right\}.
\end{equation}

\noindent The above spaces and degrees of freedom are a generalization to curved elements
of their straight counterparts in~\cite{AyusodeDios-Lipnikov-Manzini:2016}.
Interpolation estimates are derived in Section~\ref{section:interpolation} below.

\begin{remark}
The enhanced version of the space $V_h(K)$~\cite{Ahmad-Alsaedi-Brezzi-Marini-Russo:2013}
involves a modification of the space inside the element and not on the (curved) edges.
Therefore, designing an enhanced version of the local spaces in~\eqref{local-space}
is straightforward by combining the tools in~\cite{Ahmad-Alsaedi-Brezzi-Marini-Russo:2013}
with the techniques presented here.
\end{remark}

\subsection{Polynomial projectors and discrete bilinear forms}
\label{subsection:polynomial-projections-bf}
Here, we introduce projections onto polynomial spaces,
stabilizing bilinear forms,
and a discrete bilinear form.

\paragraph*{Projections onto polynomial spaces.}
On each element~$\E$,
we introduce projection operators onto polynomial spaces
of maximum degree~$n$ in~$\Nbb$:
\begin{itemize}
\item the (possibly curved) edge $L^2$ projection $\Pitilde_{n}^{0,\e} : L^2(\e)\rightarrow \Pbbtilde_n(\e)$
given by
\begin{equation} \label{L2-projection-edge}
\int_\e\qtilde_n  (v -\Pitilde_n^{0,\e} v )\ \mathrm{d}s =0
\qquad \forall \qtilde_n\in  \Pbbtilde_n(\e);
\end{equation}
\item the Ritz-Galerkin projection
$\Pitilde_n^{\nabla,\E} : H^1(\E)\rightarrow \Pbb_n(\E)$
satisfying, for all~$\qn$ in~$\Pbb_n(\E)$,
\begin{equation} \label{RG-projection-bulk}
\begin{split}
& \int_\E \nabla \qn \cdot \nabla\Pitilde_n^{\nabla,\E} v \ \mathrm{d}K
= -\int_\E\Delta \qn  v \ \mathrm{d}K
+\sum_{\e\in\calE_{h}^K} \int_\e\Pitilde_{n-1}^{0,\e}
(\nbfE \cdot \nabla \qn) v \ \mathrm{d}s\\ 
& = -\int_\E\Delta \qn  v \ \mathrm{d}K
+\sum_{\e\in\calE_{h,s}^K}\int_\e 
(\nbfE \cdot \nabla \qn)  v \ \mathrm{d}s
+\sum_{\e\in\calE_{h,c}^K}\int_\e\Pitilde_{n-1}^{0,\e}
(\nbfE \cdot \nabla \qn) v \ \mathrm{d}s,
\end{split}
\end{equation}
together with
\begin{equation} \label{RG-projection-zero-average}
\begin{cases}
\int_{\partial \E} (v -\Pitilde_n^{\nabla,\E} v )\ \mathrm{d}s =0   & \text{if } n=1,\\
\int_{\E} (v -\Pitilde_n^{\nabla,\E} v )\ \mathrm{d}\E =0           & \text{if } n\geq 2;
\end{cases}
\end{equation}
\item the $L^2$ projection
$\Pi_n^{0,\E} : L^2(\E)\rightarrow \Pbb_n(\E)$ given by
\begin{equation} \label{L2-projection-bulk}
\int_\E \qn  (v -\Pi_n^{0,\E} v )\ \mathrm{d}K =0
\qquad \forall \qn \in \Pbb_n(\E). 
\end{equation}
\end{itemize}

\begin{remark} \label{remark:RG-projection-inconsistency}
If all the edges of an element~$\E$ are straight,
then $\Pitildenabla \qk= \qk$ for all~$\qk$ in~$\Pbb_k(\E)$ 
and~$\Pitildenabla$ boils down to the standard VEM operator $\Pi^{\nabla,\E}_k$~\cite{BeiradaVeiga-Brezzi-Cangiani-Manzini-Marini-Russo:2013}.
Instead, if at least one edge of~$\E$ is curved,
then $\Pitildenabla \qk \neq \qk$ unless~$k=0$.
This fact is the reason of the lack of polynomial consistency of the method on curved elements.
\end{remark}

Next, we show that the three projectors above are computable by means of the DoFs.
\begin{proposition}
Given the DoFs~\eqref{edge-dofs}--\eqref{bulk-dofs} of a given~$\vh$ in~$\VhE$,
we can compute~$\Pitildeze \vh{}_{|\e}$,
$\Pi_{k-2}^{0,\E} \vh$,
and~$\Pitildenabla \vh$,
where the three operators are defined in~\eqref{L2-projection-edge},
\eqref{RG-projection-bulk}--\eqref{RG-projection-zero-average},
and~\eqref{L2-projection-bulk}, respectively.
\end{proposition}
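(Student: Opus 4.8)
The plan is to show that each of the three projectors is determined by data that can be recovered from the DoFs \eqref{edge-dofs}--\eqref{bulk-dofs}, proceeding in the natural order of dependence: first the edge $L^2$ projections, then the bulk $L^2$ projection, and finally the Ritz--Galerkin projection, which uses the first two.

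\textbf{Edge projections.} For a fixed edge $\e$, the projection $\Pitildeze \vh{}_{|\e}$ is, by definition \eqref{L2-projection-edge}, characterized by the values $\int_\e \vh \,\qtilde\,\mathrm{d}s$ for $\qtilde$ ranging over a basis of $\Pbbtilde_{k-1}(\e)$. Since $\widetilde{\calM}_{k-1}(\e)$ is such a basis, these are exactly the quantities $|\e|\,\bm{D}_{\e}^{i}(\vh)$ from \eqref{edge-dofs}. Solving the resulting small linear system (with the mass matrix of $\widetilde{\calM}_{k-1}(\e)$ on $\e$, which is a fixed computable object) yields the coefficients of $\Pitildeze \vh{}_{|\e}$.

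\textbf{Bulk $L^2$ projection.} By \eqref{L2-projection-bulk} with $n = k-2$, the polynomial $\Pi_{k-2}^{0,\E}\vh$ is characterized by the moments $\int_\E \vh\, m_j\,\mathrm{d}K$ for $m_j \in \calM_{k-2}(\E)$. These are precisely $|\E|\,\bm{D}_{\E}^{j}(\vh)$ from \eqref{bulk-dofs}. Again, inverting the (computable) monomial mass matrix on $\E$ gives the coefficients.

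\textbf{Ritz--Galerkin projection.} This is the main point and the only step requiring care, because \eqref{RG-projection-bulk} involves $\vh$ itself against data that is \emph{not} a priori a DoF. Fix $\qk \in \Pbb_k(\E)$. For the bulk term, $\Delta \qk \in \Pbb_{k-2}(\E)$, so $\int_\E \Delta \qk\, \vh\,\mathrm{d}K = \int_\E \Delta\qk\,(\Pi_{k-2}^{0,\E}\vh)\,\mathrm{d}K$ by the defining property of the $L^2$ projection, and the right-hand side is computable by the previous step. For the edge terms: on a straight edge, $\vh{}_{|\e}$ against the polynomial $(\nbfE\cdot\nabla\qk)_{|\e}\in\Pbbtilde_{k-1}(\e)$ is handled by writing $\int_\e (\nbfE\cdot\nabla\qk)\,\vh\,\mathrm{d}s = \int_\e (\nbfE\cdot\nabla\qk)\,(\Pitildeze\vh)\,\mathrm{d}s$, again using the $L^2$-orthogonality; on a curved edge, the integrand already carries the factor $\Pitilde_{k-1}^{0,\e}(\nbfE\cdot\nabla\qk)\in\Pbbtilde_{k-1}(\e)$ (this is the whole reason the method uses this modified operator), so the same substitution $\vh \leftrightarrow \Pitildeze\vh$ applies and the term is computable from the edge projections. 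Hence the entire right-hand side of \eqref{RG-projection-bulk} is a computable linear functional of $\qk$; letting $\qk$ run over $\calM_k(\E)$ and using that $\int_\E \nabla\qk\cdot\nabla(\cdot)\,\mathrm{d}K$ is a fixed singular matrix on $\Pbb_k(\E)$ with one-dimensional kernel (the constants), we recover $\Pitildenabla\vh$ up to an additive constant. That constant is then fixed by \eqref{RG-projection-zero-average}: for $k=1$ it requires $\int_{\partial\E}(\vh - \Pitildenabla\vh)\,\mathrm{d}s$, which is a sum over edges of edge integrals of $\vh$ against $1 \in \Pbbtilde_0(\e)$, i.e.\ the zeroth DoFs \eqref{edge-dofs}; for $k\ge 2$ it requires $\int_\E \vh\,\mathrm{d}K$, which is the zeroth DoF in \eqref{bulk-dofs}. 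This completes the computation of all three operators. The only mild subtlety worth stating explicitly is that $\nbfE\cdot\nabla\qk$ restricted to a curved edge is generally not in $\Pbbtilde_{k-1}(\e)$, which is exactly why the operator $\Pitildenabla$ is defined with the extra edge projection $\Pitilde_{k-1}^{0,\e}$ in \eqref{RG-projection-bulk}; with that in place, every occurrence of $\vh$ is paired with something in $\Pbbtilde_{k-1}(\e)$ on edges or in $\Pbb_{k-2}(\E)$ in the bulk, and the reduction to computable projected moments goes through.
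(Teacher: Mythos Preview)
Your proof is correct and follows essentially the same approach as the paper's own proof, which argues tersely that the edge and bulk $L^2$ projections are immediate from the respective DoFs, and that each term on the right-hand side of~\eqref{RG-projection-bulk} and the average condition~\eqref{RG-projection-zero-average} are computable from those DoFs. Your version is simply more explicit about the mechanism (reducing each integral of~$\vh$ against a test function to an integral of the appropriate projection of~$\vh$, via $L^2$-orthogonality) and about why the extra edge projection on curved edges is what makes this work.
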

\begin{proof}
The computability of~$\Pitildeze \vh{}_{|\e}$
and~$\Pi_{k-2}^{0,\E}$ follows immediately from the edge and bulk DoFs, respectively.
As for~$\Pitildenabla \vh$,
since the average condition~\eqref{RG-projection-zero-average} is obviously computable,
it suffices to show the computability of the right hand side of \eqref{RG-projection-bulk}.
The first term on the right-hand side is computable from~\eqref{bulk-dofs};
the second and third terms are computable from~\eqref{edge-dofs}.
\end{proof}

\begin{remark} \label{remark:use-of-Pitildenabla}
Differently from the standard nonconforming virtual element framework~\cite{AyusodeDios-Lipnikov-Manzini:2016},
we do not use an $H^1$ projection,
but rather the Ritz-Galerkin projection~$\Pitildenabla$ in \eqref{RG-projection-bulk}--\eqref{RG-projection-zero-average}.
The reason is that the definition of the local space and the choice of the DoFs
would not allow us to compute the standard $H^1$ projection from the degrees of freedom,
due to the noncomputability of $\int_\e (\nbfE \cdot \nabla \qn)  v_h$
on curved edges.
\end{remark}

\paragraph*{A discrete bilinear form.}
We define a discrete local bilinear form
$\aE_h (\cdot,\cdot): \VhE \times \VhE \rightarrow \bbR$
on each element~$\E$ as follows:
\[
\ahE(\uh,\vh) = \aE(\Pitildenabla \uh ,\Pitildenabla \vh)
+\SE((I-\Pitildenabla )\uh,(I-\Pitildenabla )\vh)
\quad \forall \uh,\ \vh \in \VhE.
\]
Above, $\SE(\cdot,\cdot) : \VhE\times \VhE\rightarrow \bbR$ 
is a bilinear form that satisfies two properties:
it is computable via the local set of DoFs over~$\E$;
it satisfies the stability bounds
\begin{equation} \label{stability-bounds}
\SemiNorm{\vh}_{1,\E}^2
\lesssim \SE(\vh,\vh)
\lesssim \SemiNorm{\vh}_{1,\E}^2
\qquad \forall \vh \in \VhE \cap \ker(\Pitildenabla).
\end{equation}
Denoting by $\{ \bm{D}^l \}_{l=1}^{N_{\mathrm{dof}}(\E)}$ the set of all degrees of freedom of $\VhE$,
a possible stabilization satisfying~\eqref{stability-bounds} is
\[
\SE(\uh,\vh) = \sum_{l=1}^{N_{\mathrm{dof}}(\E)}
\bm{D}^l(\uh)\bm{D}^l(\vh),
\]
which can be rewritten in terms of boundary and bulk contributions as
\begin{equation}\label{dofi-dofi}
\SE(\uh,\vh)
= \sum_{\e\in\EcalE} \sum_{i=1}^{k}\bm{D}_{\e}^i(\uh)\bm{D}_{\e}^i(\vh)
+\sum_{j=1}^{k(k-1)/2}\bm{D}_{\E}^j(\uh)\bm{D}_{\E}^j(\vh).
\end{equation}
We postpone to Section~\ref{section:stability} below the analysis of such a stabilizing term.
Of course, other stabilizations satisfying the computability and stability properties can be defined;
we stick to the choice in~\eqref{dofi-dofi} since it is the most popular in the virtual element community.

\begin{remark} \label{remark:inconsistency}
The bilinear form~$\ahE(\cdot,\cdot)$ does not satisfy the usual consistency property of the virtual element method~\cite{AyusodeDios-Lipnikov-Manzini:2016, BeiradaVeiga-Brezzi-Cangiani-Manzini-Marini-Russo:2013}
when~$\E$ is a curved polygon.
The reason is the use of a polynomial projection that is not the usual $H^1$ projection;
see Remark~\ref{remark:RG-projection-inconsistency} for further details.
\end{remark}

Finally, we introduce a global discrete bilinear form
$a_h (\cdot,\cdot): \Vh \times \Vh\rightarrow \bbR$ defined as
\begin{equation}\label{ak}
\ah(\uh,\vh) = \sum_{\E\in\taun}\ahE(\uh,\vh)
\qquad \forall \uh,\ \vh \in \Vh.
\end{equation}

\paragraph*{The discrete right-hand side.}
Here, we construct a computable discretization of the right-hand side~$(f, \vh)_{0,\Omega}$ in~\eqref{weak}.
For $k\geq2$, we introduce
\begin{equation} \label{fk1}
(\fh, \vh)_{0,\Omega}
= \sum_{\E\in\taun}\int_\E  f\ \PizEkmt\vh \mathrm{d}K 
\qquad \qquad \forall \vh \in \Vh;
\end{equation}
Instead, for~$k = 1$, we approximate~$f$ by its piecewise projection onto constants,
average the test function~$\vh$ over the edges of~$\E$,
and write
\begin{equation}
\label{fk2}
(\fh, \vh)_{0,\Omega}
= \sum_{\E\in\taun} 
\left\{ |\E| (\Pi_{0}^{0,\E} f) 
\left( \dfrac{1}{N_{\e}} \sum_{\e\in\EcalE}\bm{D}_{\e}^0(\vh) \right) \right\}.
\end{equation}

\subsection{The method} \label{subsection:VEM}
We propose the following nonconforming virtual element method:
\begin{equation}\label{VEM}
\begin{cases}
& \text{find $\uh\in \Vh^g(\taun)$ such that} \\
& \ah (\uh,\vh)= (\fh, \vh)_{0,\Omega}
        \qquad\qquad \forall \vh\in \Vh^0(\taun).
\end{cases}
\end{equation}
The well posedness of method~\eqref{VEM}
requires further technical tools,
which we derive in Section~\ref{section:stability} below.
For this reason, we postpone its proof to Theorem~\ref{theorem:global-stability&well-posedness}.

\begin{remark}
Method~\eqref{VEM} is designed for meshes with rather general ${\mathcal C}^1$ curved edges.
The assumptions on the meshes in Section~\ref{subsection:meshes},
and notably the fact that curved edges are used only on the (fixed) curved boundary,
will be used to derive convergence estimates in Section~\ref{section:convergence} below.
\end{remark}

\section{Stability analysis}\label{section:stability}
In this section, we prove the stability bounds~\eqref{stability-bounds} for the stabilization~\eqref{ak}.
We proceed in some steps.
First, we recall the inverse inequalities in \cite[Lemma~$6.3$]{BeiraodaVeiga-Lovadina-Russo:2017}
for curved elements.
The proof is independent on whether the element is curved or not.
\begin{lemma}\label{lemma:inverse-L2Laplacian-H1}
Let $\E\in\taun$, for any $v\in H^1(\E)$ such that $\Delta v\in \Pbb_n(\E)$,
we have
\[
\|\Delta v\|_{0,\E}\lesssim \hE^{-1}\SemiNorm{v}_{1,\E}.
\]
\end{lemma}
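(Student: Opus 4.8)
Proof plan for Lemma 4.1 (inverse inequality $\|\Delta v\|_{0,\E}\lesssim \hE^{-1}|v|_{1,\E}$ for $v\in H^1(\E)$ with $\Delta v\in\Pbb_n(\E)$).

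The plan is to exploit the hypothesis $\Delta v \in \Pbb_n(\E)$ so that $\|\Delta v\|_{0,\E}^2$ can be rewritten as an integral against a polynomial, then integrate by parts to move a derivative onto $v$, and finally absorb the resulting boundary and bulk terms using a polynomial inverse inequality on $\E$. Concretely, set $q := \Delta v \in \Pbb_n(\E)$ and write $\|q\|_{0,\E}^2 = \int_\E q\,\Delta v\,\mathrm{d}K$. Integrating by parts gives $\int_\E q\,\Delta v = -\int_\E \nabla q\cdot\nabla v + \int_{\partial\E} q\,(\nbfE\cdot\nabla v)\,\mathrm{d}s$. The first term is immediately controlled: $|\int_\E \nabla q\cdot\nabla v| \le |q|_{1,\E}\,|v|_{1,\E} \lesssim \hE^{-1}\|q\|_{0,\E}\,|v|_{1,\E}$ by a standard polynomial inverse estimate on the star-shaped element $\E$ (valid with constants depending only on $\rho$, by assumption \textbf{(G1)}).

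The main obstacle is the boundary term $\int_{\partial\E} q\,(\nbfE\cdot\nabla v)\,\mathrm{d}s$, since $\nabla v$ is only in $L^2(\E)$ and has no a priori trace on $\partial\E$. The fix is to integrate by parts the other way as well: from $-\int_\E \nabla q\cdot\nabla v$ integrate by parts once more to obtain $\int_\E (\Delta q)\,v - \int_{\partial\E}(\nbfE\cdot\nabla q)\,v\,\mathrm{d}s$; combining the two identities expresses everything in terms of $v$ (no derivatives of $v$) and derivatives of the polynomial $q$. Alternatively — and this is the cleaner route — one avoids traces of $\nabla v$ entirely by choosing the test polynomial more carefully: replace $q$ by $q - \overline{q}$ where $\overline q$ is chosen so that a suitable integration-by-parts identity closes, or invoke the abstract argument of \cite[Lemma~6.3]{BeiraodaVeiga-Lovadina-Russo:2017} directly. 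Following that reference, the key is that for $v$ with $\Delta v = q \in \Pbb_n(\E)$ one has the Green-type identity $\int_\E q\,\Delta v\,\mathrm{d}K = -\int_\E\nabla q\cdot\nabla v\,\mathrm{d}K + \langle \nbfE\cdot\nabla v, q\rangle_{\partial\E}$, but the boundary pairing can be re-expressed, using $\Delta v = q$ again, so that the normal-derivative trace is paired against a polynomial and bounded by $|v|_{1,\E}$ via a trace inequality on the (possibly curved) boundary — here assumptions \textbf{(G1)}--\textbf{(G2)} guarantee the trace constant scales as $\hE^{-1/2}$ uniformly.

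Assembling: $\|q\|_{0,\E}^2 \lesssim \hE^{-1}\|q\|_{0,\E}\,|v|_{1,\E} + (\text{boundary contribution}) \lesssim \hE^{-1}\|q\|_{0,\E}\,|v|_{1,\E}$, and dividing through by $\|q\|_{0,\E}$ yields $\|\Delta v\|_{0,\E} = \|q\|_{0,\E} \lesssim \hE^{-1}|v|_{1,\E}$, which is the claim. I expect the delicate point to be making rigorous the pairing of the normal-derivative trace of $v$ against a polynomial on $\partial\E$ — formally this is a duality pairing $H^{-1/2}$--$H^{1/2}$, and one must check the relevant trace/inverse constants are uniform in the mesh, which is exactly where the regularity hypotheses \textbf{(G1)}--\textbf{(G2)} and the fixed smoothness of $\gamma$ enter; since \cite[Lemma~6.3]{BeiraodaVeiga-Lovadina-Russo:2017} already carries this out and its proof does not distinguish curved from straight elements, the cleanest exposition is to cite it and remark that nothing in the argument uses straightness of the edges.
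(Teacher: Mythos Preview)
The paper does not actually prove this lemma: it simply recalls it from \cite[Lemma~6.3]{BeiraodaVeiga-Lovadina-Russo:2017} and notes that the argument there is oblivious to whether the edges are curved. Your closing recommendation --- cite that result and observe that straightness plays no role --- is therefore exactly what the paper does.

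That said, the independent proof you sketch before reaching that conclusion has a real gap. After writing $\|q\|_{0,\E}^2 = -\int_\E \nabla q\cdot\nabla v + \langle \nbfE\cdot\nabla v, q\rangle_{\partial\E}$, you correctly flag the boundary term as the obstacle, but neither of your proposed fixes closes: integrating by parts again introduces $\int_\E v\,\Delta q$ and $\int_{\partial\E} v\,(\nbfE\cdot\nabla q)$, which still require controlling $\|v\|_{0,\E}$ or boundary traces of $v$ by $|v|_{1,\E}$ without a mean-value assumption; and ``re-expressing'' the $H^{-1/2}$--$H^{1/2}$ pairing does not help, since the Neumann trace bound (your Lemma~\ref{lemma:Neumann-trace-inequality}) would give $\NNorm{\nbfE\cdot\nabla v}_{-1/2,\partial\E} \lesssim |v|_{1,\E} + \hE\|\Delta v\|_{0,\E}$, i.e.\ the very quantity you want to bound reappears on the right with no small prefactor to absorb.

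The standard device that actually kills the boundary term --- and is what underlies the cited result --- is a bubble function: choose $b_\E \ge 0$ supported in $\E$, vanishing on $\partial\E$, with $\|b_\E^{1/2} q\|_{0,\E} \approx \|q\|_{0,\E}$ and $|b_\E q|_{1,\E} \lesssim \hE^{-1}\|q\|_{0,\E}$ for polynomials $q$ (both hold by norm equivalence on the star-shaped element, using only \textbf{(G1)}). Then $\|q\|_{0,\E}^2 \lesssim \int_\E b_\E q\,\Delta v = -\int_\E \nabla(b_\E q)\cdot\nabla v \le |b_\E q|_{1,\E}\,|v|_{1,\E} \lesssim \hE^{-1}\|q\|_{0,\E}\,|v|_{1,\E}$, and no boundary term ever appears. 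This construction is insensitive to edge curvature, which is precisely the paper's one-line remark.
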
 

We introduce the scaled norm
\[
\NNorm{w}_{\frac12,\partial \E}
:= \hE^{-\frac12}\Norm{w}_{0,\partial \E}
    +\SemiNorm{w}_{\frac12,\partial \E},
\]
and the corresponding negative norm
\[
\NNorm{w}_{-\frac12,\partial \E}
:= \sup_{q\in H^{\frac12}(\partial \E)}
    \frac{\int_{ \partial \E}qw\ \mathrm{d}s}
{\NNorm{q}_{\frac12,\partial \E}}.
\]

We recall the Neumann trace inequality for Lipschitz domains;
see, e.g., \cite[Theorem A.33]{Schwab:1998}.
\begin{lemma} \label{lemma:Neumann-trace-inequality}
Given an element~$K$ and~$v$ in~$H^1(\E)$ such that
$\Delta v$ belongs to~$L^2(\E)$, we have
\[
\NNorm{\nbfE \cdot \nabla v}_{-\frac12,\partial \E}
\lesssim |v|_{1,\E}+\hE\|\Delta v\|_{0,\E}.
\]
\end{lemma}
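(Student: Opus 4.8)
The plan is to prove the Neumann trace inequality $\NNorm{\nbfE \cdot \nabla v}_{-\frac12,\partial \E} \lesssim |v|_{1,\E} + \hE\|\Delta v\|_{0,\E}$ by duality, reducing it to the usual trace theorem applied to an auxiliary harmonic-type lifting. First I would unpack the definition of the negative norm: we need to bound $\int_{\partial \E} q\, (\nbfE\cdot\nabla v)\,\mathrm{d}s$ by $\NNorm{q}_{\frac12,\partial \E}$ times the claimed right-hand side, for every $q\in H^{\frac12}(\partial\E)$. The natural move is to pick a lifting $w\in H^1(\E)$ with $w_{|\partial\E}=q$ and $\|w\|_{1,\E}\lesssim \NNorm{q}_{\frac12,\partial\E}$ — here the scaled norm $\NNorm{\cdot}_{\frac12,\partial\E}$ is precisely chosen so that the lifting constant is scale-invariant, i.e.\ uniform under the affine rescaling $\E\mapsto \E/\hE$, using (G1) to control the trace/extension constant on the rescaled star-shaped domain.

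Next I would integrate by parts. Since $\Delta v\in L^2(\E)$ and $w\in H^1(\E)$, Green's identity gives
\[
\int_{\partial\E} q\,(\nbfE\cdot\nabla v)\,\mathrm{d}s
= \int_\E \nabla w\cdot\nabla v\,\mathrm{d}K + \int_\E w\,\Delta v\,\mathrm{d}K.
\]
The first term is bounded by $|w|_{1,\E}\,|v|_{1,\E}\le \|w\|_{1,\E}\,|v|_{1,\E}$ by Cauchy--Schwarz; the second by $\|w\|_{0,\E}\,\|\Delta v\|_{0,\E}$. To get the factor $\hE$ in front of $\|\Delta v\|_{0,\E}$, I would use a scaled Poincaré-type bound $\|w\|_{0,\E}\lesssim \hE\,|w|_{1,\E} + (\text{edge-average term})$, or more cleanly observe that the lifting $w$ can be chosen with $\hE^{-1}\|w\|_{0,\E} + |w|_{1,\E}\lesssim \NNorm{q}_{\frac12,\partial\E}$, so that the first term contributes $\NNorm{q}_{\frac12,\partial\E}\,|v|_{1,\E}$ and the second contributes $\NNorm{q}_{\frac12,\partial\E}\,\hE\|\Delta v\|_{0,\E}$. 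Combining and dividing by $\NNorm{q}_{\frac12,\partial\E}$ then taking the supremum over $q$ yields the claim.

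The main obstacle is the scale-invariance of the lifting estimate on the possibly curved star-shaped element $\E$: one must verify that a bounded right-inverse of the trace operator exists with a constant depending only on the chord-radius parameter $\rho$ from (G1), and with the correct powers of $\hE$ attached — which is exactly what the triple-bar norms $\NNorm{\cdot}_{\pm\frac12,\partial\E}$ are engineered to encode. I would handle this by rescaling $\E$ to unit diameter, invoking the standard extension/trace theorem (as in \cite[Theorem A.33]{Schwab:1998}) on the rescaled domain whose geometry is uniformly controlled by (G1), and then tracking how the $H^1$, $L^2$, and $H^{\frac12}$ norms transform under dilation. Since the statement in the excerpt attributes the result to that reference, I would lean on it for the extension operator and present the rescaling bookkeeping as the only genuinely new content.
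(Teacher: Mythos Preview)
The paper does not actually prove this lemma; it merely recalls it from the literature, citing \cite[Theorem~A.33]{Schwab:1998}. Your sketch is the standard duality--Green's-identity argument behind that reference, with the rescaling to a unit-diameter domain being exactly how the $\hE$ factor and the scale-invariant constant under (\textbf{G1}) arise. So there is nothing to compare: your proposal is correct and is essentially the proof the cited reference gives, whereas the paper treats the result as known.
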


Next, we state a polynomial inverse inequality on the boundary of an element~$\E$.

\begin{lemma}\label{lemma:inv012}
Given an element~$\E$ and~$\vh$ in~$\VhE$,
we have
\[
\Norm{\nbfE \cdot \nabla \vh}_{0,\partial \E}
\lesssim \hE^{-\frac12} \NNorm{\nbfE \cdot \nabla \vh}_{-\frac12,\partial \E}.
\]
\end{lemma}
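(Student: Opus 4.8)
The plan is to exploit the fact that the trace $\nbfE \cdot \nabla \vh$ on $\partial\E$ is, edge by edge, a (mapped) polynomial of degree at most $k-1$: on each straight edge $\e$ it lies in $\Pbb_{k-1}(\e)$, and on each curved edge $\e$ it lies in $\widetilde{\Pbb}_{k-1}(\e) = \Pbb_{k-1}(\Ie)\circ\gammae^{-1}$. Write $p := \nbfE\cdot\nabla\vh$. The statement is then a purely polynomial inverse estimate: for a piecewise (mapped) polynomial $p$ on $\partial\E$ of bounded degree, the $L^2$ norm is controlled by $h_\E^{-1/2}$ times the scaled negative-$\tfrac12$ norm $\NNorm{\cdot}_{-\frac12,\partial\E}$. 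The natural route is duality: by definition of the negative norm,
\[
\NNorm{p}_{-\frac12,\partial\E}
= \sup_{q \in H^{\frac12}(\partial\E)}
\frac{\int_{\partial\E} q\,p\,\mathrm{d}s}{\NNorm{q}_{\frac12,\partial\E}}
\;\ge\;
\frac{\int_{\partial\E} p^2\,\mathrm{d}s}{\NNorm{p}_{\frac12,\partial\E}},
\]
so it suffices to prove the polynomial inverse inequality $\NNorm{p}_{\frac12,\partial\E} \lesssim h_\E^{-1/2}\Norm{p}_{0,\partial\E}$, i.e. both $h_\E^{-1/2}\Norm{p}_{0,\partial\E} \lesssim h_\E^{-1/2}\Norm{p}_{0,\partial\E}$ (trivial) and $\SemiNorm{p}_{\frac12,\partial\E} \lesssim h_\E^{-1/2}\Norm{p}_{0,\partial\E}$. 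Combining these two and dividing through gives exactly the claim.

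The core step is therefore the $\SemiNorm{\cdot}_{\frac12}$ inverse estimate for (mapped) polynomials of degree $\le k-1$. First I would localize: since $\SemiNorm{p}_{\frac12,\partial\E}^2$ is equivalent, up to constants depending only on $\rho$ and $k$ (hence on the bounded number of edges and the regularity assumptions (G1)--(G2)), to $\sum_{\e\in\EcalE}\SemiNorm{p}_{\frac12,\e}^2$ plus cross terms that are themselves controlled by the edgewise $L^2$ norms — one can pass to a single reference edge of unit size. On a straight edge, scale to the unit interval: there $\SemiNorm{q}_{\frac12}\lesssim \Norm{q}_{0}$ for $q\in\Pbb_{k-1}$ by equivalence of norms on the finite-dimensional space $\Pbb_{k-1}$, and rescaling back to length $\he\approx h_\E$ produces the factor $h_\E^{-1/2}$ (the $\SemiNorm{\cdot}_{1/2}$ seminorm scales like $h^{-1/2}$ relative to $\Norm{\cdot}_0 \sim h^{1/2}$). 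On a curved edge, pull back through $\gammae^{-1}$: since $\gamma,\gamma^{-1}\in W^{1,\infty}$ (indeed $\mathcal C^\ord$, $\ord\ge1$) with norms fixed once and for all, the map $\widetilde q \mapsto q = \widetilde q\circ\gammae$ is a bounded isomorphism between $H^{1/2}(\e)$ and $H^{1/2}(\Ie)$ with constants independent of the mesh (using $\he\approx\elle\approx |\Ie|\, \|\gamma'\|$), so the estimate transfers from $\Pbb_{k-1}(\Ie)$ to $\widetilde\Pbb_{k-1}(\e)$ at the cost of a fixed constant.

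The main obstacle I anticipate is bookkeeping the behaviour of the $H^{1/2}$ seminorm across edge junctions: $\SemiNorm{\cdot}_{\frac12,\partial\E}$ is a nonlocal (double-integral) norm over the whole boundary curve, and the jump of a piecewise-polynomial $p$ at the vertices contributes. One must argue that the contribution of pairs of points lying on different edges is dominated by the diagonal edge contributions and by $\Norm{p}_{0,\partial\E}^2/h_\E$ — this is where the finite, uniformly-bounded number of edges (a consequence of (G1)--(G2)) and the comparability $\he \approx h_\E$ enter decisively, together with the fact that on curved edges the parametrization $\gamma$ and its inverse are bi-Lipschitz with fixed constants so that geodesic and chordal distances along $\partial\E$ are comparable at scale $h_\E$. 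A clean way to sidestep the junction subtlety is to use instead the characterization $\NNorm{p}_{\frac12,\partial\E}^2 \approx \inf\{\SemiNorm{v}_{1,\E}^2 + h_\E^{-2}\Norm{v}_{0,\E}^2 : v\in H^1(\E),\ v_{|\partial\E}=p\}$ together with a polynomial harmonic (or discrete) extension of $p$ into $\E$ whose full $H^1$ energy is controlled by $h_\E^{-1}\Norm{p}_{0,\partial\E}$ via a scaling argument on the reference element; either route closes the proof, and the rest is routine rescaling.
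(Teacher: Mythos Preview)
Your duality strategy is the right idea, and the paper's own argument is essentially the same observation that $p:=\nbfE\cdot\nabla\vh$ is an edgewise (mapped) polynomial, after which it defers to a known inverse estimate. The difficulty is that your specific realization of the duality step does not go through as written.

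The gap is the choice $q=p$. The function $p$ is a piecewise (mapped) polynomial on $\partial\E$ with \emph{no continuity constraint} at the vertices; generically it jumps there (already for $k=1$ the normal trace is a different constant on each edge). A jump discontinuity on a one-dimensional closed curve forces $\SemiNorm{p}_{\frac12,\partial\E}=+\infty$, so $p\notin H^{\frac12}(\partial\E)$ and it is not an admissible test function in the dual norm. Your anticipated ``bookkeeping'' of cross-edge contributions cannot save this: the off-diagonal part of the Gagliardo double integral near a jump vertex diverges logarithmically, it is not merely ``dominated'' by $\hE^{-1}\Norm{p}_{0,\partial\E}^2$. The alternative you propose, extending $p$ into $\E$ and using the trace characterization of $\NNorm{\cdot}_{\frac12,\partial\E}$, fails for exactly the same reason: a discontinuous $p$ has no $H^1(\E)$ extension.

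The standard repair keeps your duality skeleton but replaces $q=p$ by a test function that is forced to vanish at the vertices. On each edge $\e$ set $q_{|\e}:=b_\e\,p_{|\e}$ with $b_\e$ the quadratic edge bubble (pulled back through $\gammae$ on curved edges). Then $q$ is continuous on $\partial\E$, hence in $H^{\frac12}(\partial\E)$; by equivalence of norms on the finite-dimensional space $\Pbbtilde_{k-1}(\e)$ one has $\int_\e p\,q \gtrsim \Norm{p}_{0,\e}^2$ and $\Norm{q}_{0,\e}\lesssim\Norm{p}_{0,\e}$; and since $q$ vanishes at all vertices, the cross-edge terms in $\SemiNorm{q}_{\frac12,\partial\E}$ are now genuinely controlled by $\hE^{-1}\sum_\e\Norm{q}_{0,\e}^2$, so the edgewise scaling argument you sketched gives $\NNorm{q}_{\frac12,\partial\E}\lesssim \hE^{-\frac12}\Norm{p}_{0,\partial\E}$. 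Inserting this $q$ in the definition of $\NNorm{p}_{-\frac12,\partial\E}$ yields the claim. The regularity of $\gamma$ and $\gamma^{-1}$ enters exactly where you said, to transfer the estimates between $\e$ and $\Ie$ with mesh-independent constants.
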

\begin{proof}
Using that~$\vh$ belongs to~$\VhE$,
we have that~$\nbfE \cdot \nabla \vh$
belongs to~$\Pbbtilde_{k-1}(\e)$.
So, proving the assertion boils down to proving
a mapped polynomial inverse estimate;
see, e.g., \cite[Lemma~$2.3$]{BeiraodaVeiga-Mascotto:2022}.
The uniformity of the constant follows from the regularity
of the parametrization of the curved edge.
\end{proof}

We are ready to show the continuity of the stabilization~$\SE(\cdot,\cdot)$ in~\eqref{dofi-dofi}.

\begin{proposition}\label{proposition:stability-SE}
Given an element~$\E$, $\vh$ and~$\wh$ in~$\VhE$,
and~$\SE(\cdot,\cdot)$ as in~\eqref{dofi-dofi},
we have the continuity property
\[
\SE(\vh,\wh)
\lesssim
(\hE^{-2} \Norm{\vh}^2_{0,\E} + \SemiNorm{\vh}^2_{1,\E})^{\frac12} 
(\hE^{-2} \Norm{\wh}^2_{0,\E} + \SemiNorm{\wh}^2_{1,\E})^{\frac12}.
\]
If~$\vh$ and~$\wh$ have zero average
on ~$\E$ or~$\partial \E$, we further deduce
\[
\SE(\vh,\wh)
\lesssim
\SemiNorm{\vh}_{1,\E} \SemiNorm{\wh}_{1,\E} .
\]
\end{proposition}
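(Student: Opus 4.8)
The plan is to bound each of the degrees of freedom appearing in $\SE(\cdot,\cdot)$ by computable multiples of the scaled $H^1$-type norm, and then conclude by Cauchy--Schwarz. I would split the argument into a bulk part and an edge part, treating them separately before recombining.

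First, for the bulk moments $\bm{D}_{\E}^j(\vh) = |\E|^{-1}\int_\E \vh\, m_j\,\mathrm{d}K$ with $m_j \in \calM_{k-2}(\E)$: since $|m_j| \le 1$ on $\E$ and there are only finitely many such moments (bounded in number thanks to $k$ being fixed), a Cauchy--Schwarz inequality on each integral gives $|\bm{D}_{\E}^j(\vh)| \le |\E|^{-1/2}\Norm{\vh}_{0,\E} \approx \hE^{-1}\Norm{\vh}_{0,\E}$, so $\sum_j \bm{D}_{\E}^j(\vh)\bm{D}_{\E}^j(\wh) \lesssim \hE^{-2}\Norm{\vh}_{0,\E}\Norm{\wh}_{0,\E}$, which is controlled by the right-hand side.

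Second, for the edge moments $\bm{D}_{\e}^i(\vh) = |\e|^{-1}\int_\e \vh\,\widetilde{m}_i\,\mathrm{d}s$: again $|\widetilde{m}_i|\le 1$ on $\e$, so $|\bm{D}_{\e}^i(\vh)| \le |\e|^{-1/2}\Norm{\vh}_{0,\e}$, and summing over the (uniformly bounded number of) edges of $\E$ and over $i$ yields $\sum_{\e}\sum_i \bm{D}_{\e}^i(\vh)\bm{D}_{\e}^i(\wh) \lesssim \hE^{-1}\Norm{\vh}_{0,\partial\E}\Norm{\wh}_{0,\partial\E}$, using $|\e|\approx\hE$ from \textbf{(G2)}. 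The main obstacle is then to absorb the boundary term $\hE^{-1}\Norm{\vh}_{0,\partial\E}^2$ into $\hE^{-2}\Norm{\vh}_{0,\E}^2 + \SemiNorm{\vh}_{1,\E}^2$; this is exactly a scaled (multiplicative) trace inequality on the possibly curved element $\E$, namely $\hE^{-1}\Norm{\vh}_{0,\partial\E}^2 \lesssim \hE^{-2}\Norm{\vh}_{0,\E}^2 + \SemiNorm{\vh}_{1,\E}^2$, whose constant is uniform under \textbf{(G1)}--\textbf{(G2)}. Combining the bulk and edge estimates and applying Cauchy--Schwarz in the discrete sum gives the first claimed bound.

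For the second statement, when $\vh$ and $\wh$ have zero average on $\E$ (case $n\ge 2$) or on $\partial\E$ (case $n=1$), a Poincar\'e--Wirtinger inequality on the star-shaped element $\E$ gives $\Norm{\vh}_{0,\E} \lesssim \hE\SemiNorm{\vh}_{1,\E}$, so $\hE^{-2}\Norm{\vh}_{0,\E}^2 + \SemiNorm{\vh}_{1,\E}^2 \lesssim \SemiNorm{\vh}_{1,\E}^2$, and likewise for $\wh$; substituting into the first bound yields $\SE(\vh,\wh) \lesssim \SemiNorm{\vh}_{1,\E}\SemiNorm{\wh}_{1,\E}$. The only subtlety is the uniformity of the Poincar\'e and trace constants on curved elements, which follows from the mesh regularity \textbf{(G1)}--\textbf{(G2)} together with the fixed, smooth parametrization $\gamma$ of the curved boundary.
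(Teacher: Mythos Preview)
Your proposal is correct and follows essentially the same route as the paper: reduce to the diagonal estimate $\SE(\vh,\vh)\lesssim \hE^{-2}\Norm{\vh}_{0,\E}^2+\SemiNorm{\vh}_{1,\E}^2$ by Cauchy--Schwarz, bound the bulk moments using $\Norm{m_j}_{L^\infty(\E)}\le 1$, bound the edge moments by Cauchy--Schwarz plus the scaled trace inequality, and conclude the zero-average case by a Poincar\'e inequality. The only cosmetic difference is that the paper controls the edge terms via $\Norm{\widetilde m_i}_{0,\e}^2\lesssim\Norm{m_i}_{0,\Ie}^2\lesssim\he$ (a change of variables plus an $L^2$ bound), whereas you use the equivalent pointwise bound $|\widetilde m_i|\le 1$ directly.
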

\begin{proof}
By the standard Cauchy-Schwarz inequality, it is sufficient to prove that

\begin{equation}\label{svv}
\begin{aligned}
\SE(\vh,\vh)
&= \sum_{\e\in\EcalE} \sum_{i=1}^{k} \bm{D}_{\e}^i(\vh)^2
    +\sum_{i=1}^{k(k-1)/2} \bm{D}_{\e}^j(\vh)^2 \\
&\lesssim 
\hE^{-2}\Norm{\vh}^2_{0,\E}+\SemiNorm{\vh}^2_{1,\E}
\qquad \forall \vh,\wh \in \VhE.
\end{aligned}
\end{equation}
We bound the edge and boundary contributions on the left-hand side separately.
We start with the first one.
The standard trace inequality asserts that
\[
\hE^{-1}\Norm{\vh}_{\partial \E}^2
\lesssim \hE^{-2}\Norm{\vh}_{0,\E}^2+\SemiNorm{\vh}_{1,\E}^2.
\]
Recall that $m_i$ in~$\calM_{k-1}(\Ie)$ are shifted and scaled monomials of maximum degree~$k-1$ over~$\Ie$.
Then, using the regularity of the element, we have
$\|m_i\|_{\Ie}^2\lesssim h_{\Ie} \approx \he \approx \hE$.
Recalling that each element has a uniformly bounded number of edges, it follows that
\[
\begin{split}
& \sum_{\e\in\EcalE} \sum_{i=1}^{k}\bm{D}_{\e}^i(\vh)^2
= \sum_{\e\in\EcalE} |\e|^{-2}
\sum_{i=1}^{k} \left(\int_\e \vh \widetilde{m}_i\ \mathrm{d}s  \right)^2 \\
&\leq \sum_{\e\in\EcalE} |\e|^{-2}\sum_{i=1}^{k}\Norm{\vh}_{0,\e}^2 \|\widetilde{m}_i\|_{0,\e}^2
\lesssim \sum_{\e\in\EcalE} |\e|^{-2}\Norm{\vh}_{0,\e}^2\sum_{i=1}^{k}\|m_i\|_{0,\Ie}^2\\
&\lesssim \sum_{\e\in\EcalE}|\e|^{-1} \Norm{\vh}_{0,\e}^2
\lesssim \hE^{-1}\Norm{\vh}_{0,\partial \E}^2 
\lesssim \hE^{-2}\Norm{\vh}^2_{0,\E}+\SemiNorm{\vh}^2_{1,\E},
\end{split}
\]
which is the bound on the edge contributions of the stabilization in~\eqref{dofi-dofi}.

Next, we show the upper bound for the second term on the left-hand side of~\eqref{svv}.
Since we have that~$\Norm{m_j}_{L^\infty(\E)} \leq 1$
for any shifted and scaled monomial $m_j$ in~$\calM_{k-2}(\E)$,
it is possible to infer
\[
\begin{aligned}
& \sum_{j=1}^{k(k-1)/2} \bm{D}_{\e}^j(\vh)^2
= |\e|^{-2} \sum_{j=1}^{k(k-1)/2}
\left(\int_\E \vh m_j\ \mathrm{d}K\right)^2 \\
&\leq |\e|^{-2} \!\!\! \sum_{j=1}^{k(k-1)/2} \!\!\!
\left(\int_\E \vh \ \mathrm{d}K\right)^2
\leq |\e|^{-1} \!\!\!\sum_{j=1}^{k(k-1)/2}\!\!\!
\Norm{\vh}_{0,\E}^2
\lesssim \hE^{-2} \Norm{\vh}_{0,\E}^2,
\end{aligned}
\]
which concludes the main part of the proof. 
The final assertion follows immediately from known Poincar\'e-type inequalities on Lipschitz domains.
\end{proof}

In order to show the coercivity of the stabilization~$\SE(\cdot,\cdot)$ in~\eqref{dofi-dofi}, we need a further technical result;
see, e.g., \cite[Lemma~$4.1$]{Chen-Huang:2018} for the straight edge case
(the extension to curved edges follows with a mapping argument).
 
\begin{lemma}\label{lemma:norm-equivalence}

Let $e$ an edge of $K$ and $a := \sum_{i} a_{\e,i} \widetilde{m}_{\e,i}$ belong to~$\Pbbtilde_n(\e)$
where~$\widetilde{m}_{\e,i}$ are the shifted and scaled monomials in~$\widetilde{\calM}_n(\e)$,
and collect the coefficients~$a_{\e,i}$ in the vector~$\abf$.
Then, we have the following norm equivalence:
\[
\he\|\abf\|_{\ell^2}^2\lesssim \|a\|_{0,\e}^2\lesssim \he\|\abf\|_{\ell^2}^2.
\]
Let~$b := \sum_{j} b_j m_j$ belong to~$\Pbb_n(\E)$
where~$m_j$ are the shifted and scaled monomials in~$\calM_n(\E)$,
and collect the coefficients~$b_j$ in the vector~$\bbf$.
Then, we have the following norm equivalence
\[
\hE^2 \|\bbf\|_{\ell^2}^2
\lesssim \|b\|_{0,\E}^2
\lesssim \hE^2\|\bbf\|_{\ell^2}^2.
\] 
\end{lemma}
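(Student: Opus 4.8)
The plan is to prove the norm equivalences by a standard scaling argument, reducing everything to fixed reference configurations where the equivalence of norms on a finite-dimensional space is automatic, and then tracking how the scaling factors enter. I would treat the two statements (on edges and in the bulk) in parallel, since they follow the same pattern.

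First, for the edge estimate: recall that $\widetilde{m}_{\e,i} = m_{\e,i}\circ\gammae^{-1}$, where $m_{\e,i}\in\calM_n(\Ie)$ are the shifted and scaled monomials on the interval $\Ie$. Writing $\|a\|_{0,\e}^2 = \int_\e |a|^2\,\mathrm{d}s = \int_{\Ie} |a\circ\gammae|^2 \|\gamma'(t)\|\,\mathrm{d}t$ and using that $\gamma,\gamma^{-1}$ are fixed and of class $W^{1,\infty}$ (Remark~\ref{lengthofedges}), we have $\|\gamma'\|\approx 1$ uniformly, so $\|a\|_{0,\e}^2 \approx \int_{\Ie} |\sum_i a_{\e,i} m_{\e,i}(t)|^2\,\mathrm{d}t$. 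Now I would further rescale $\Ie$ to the reference interval $[-1,1]$ via the affine map $t\mapsto (t-x_{\Ie})/h_{\Ie}$: the shifted-and-scaled monomials $m_{\e,i}$ become the standard monomials $\hat m_i(\hat t)=\hat t^{\,i}$ on $[-1,1]$, and $\mathrm{d}t = h_{\Ie}\,\mathrm{d}\hat t$, giving $\|a\|_{0,\e}^2 \approx h_{\Ie}\int_{-1}^1 |\sum_i a_{\e,i}\hat t^{\,i}|^2\,\mathrm{d}\hat t$. On the fixed finite-dimensional space $\Pbb_n([-1,1])$, the two norms $p\mapsto (\int_{-1}^1 |p|^2)^{1/2}$ and $p\mapsto \|\abf\|_{\ell^2}$ (coefficients of $p$ in the monomial basis) are equivalent with constants depending only on $n$. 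Combining with $h_{\Ie}\approx\he$ (again Remark~\ref{lengthofedges} and assumption (\textbf{G2})) yields $\he\|\abf\|_{\ell^2}^2 \lesssim \|a\|_{0,\e}^2 \lesssim \he\|\abf\|_{\ell^2}^2$.

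For the bulk estimate the argument is analogous but cleaner since no curved map is involved: the change of variables $\bm x\mapsto \hat{\bm x} = (\bm x-\bm x_\E)/\hE$ maps $\E$ to a ``reference'' domain $\widehat\E$ contained in the unit ball and containing a ball of radius $\rho$ (by (\textbf{G1})), with $\mathrm{d}K = \hE^2\,\mathrm{d}\hat K$, and sends the scaled monomials $m_j$ to the standard monomials $\hat{\bm x}^{\boldsymbol\alpha}$. Thus $\|b\|_{0,\E}^2 = \hE^2\int_{\widehat\E}|\sum_j b_j\,\hat{\bm x}^{\boldsymbol\alpha_j}|^2\,\mathrm{d}\hat K$. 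The subtlety here is that $\widehat\E$ is not a single fixed domain, but it ranges over a compact family of star-shaped domains (parametrised by the ball radius $\rho$ and the bounded number of edges), so a compactness/uniformity argument — or the explicit observation that any such $\widehat\E$ contains a fixed ball $B_\rho$ and is contained in $B_1$, combined with monotonicity of the $L^2$ norm in the domain and the equivalence of all norms on $\Pbb_n$ — gives $\|\sum_j b_j\hat{\bm x}^{\boldsymbol\alpha_j}\|_{0,\widehat\E}^2\approx\|\bbf\|_{\ell^2}^2$ uniformly. This step, establishing uniformity of the constants over the admissible family of element shapes rather than over a single fixed element, is the only genuine point requiring care; everything else is the routine affine-scaling bookkeeping and the finite-dimensional norm equivalence. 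Multiplying back by $\hE^2$ gives the stated $\hE^2\|\bbf\|_{\ell^2}^2 \lesssim \|b\|_{0,\E}^2 \lesssim \hE^2\|\bbf\|_{\ell^2}^2$.
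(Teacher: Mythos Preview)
Your proof is correct and matches the paper's intent: the paper does not spell out a proof of this lemma but cites the literature for the straight-edge and bulk cases and remarks that the curved-edge extension follows by a mapping argument, which is exactly the pullback-and-scaling argument you have written out.
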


We are now ready to show the coercivity of the stabilization~$\SE(\cdot,\cdot)$ in~\eqref{dofi-dofi}.
\begin{proposition}\label{proposition:stability-ahE}
Given an element~$\E$, $\vh$ in~$\VhE$,
and~$\SE(\cdot,\cdot)$ as in~\eqref{dofi-dofi},
we have the coercivity property
\begin{equation}\label{Sstability2}
\SE(\vh,\vh)\gtrsim \SemiNorm{\vh}^2_{1,\E}.
\end{equation}
\end{proposition}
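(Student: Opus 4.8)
The plan is to establish the coercivity bound $\SE(\vh,\vh) \gtrsim \SemiNorm{\vh}^2_{1,\E}$ by combining the inverse inequalities recalled above (Lemmas~\ref{lemma:inverse-L2Laplacian-H1}, \ref{lemma:Neumann-trace-inequality}, \ref{lemma:inv012}) with the norm equivalence between polynomial $L^2$-norms and the $\ell^2$-norm of their coefficients (Lemma~\ref{lemma:norm-equivalence}). The starting point is the integration-by-parts identity used in the unisolvence proof: for $\vh \in \VhE$,
\[
\SemiNorm{\vh}_{1,\E}^2
= \int_\E |\nabla \vh|^2 \,\mathrm{d}K
= -\int_\E (\Delta \vh)\, \vh \,\mathrm{d}K
+ \int_{\partial \E} (\nbfE \cdot \nabla \vh)\, \vh \,\mathrm{d}s .
\]
The first term is controlled via Cauchy--Schwarz and Lemma~\ref{lemma:inverse-L2Laplacian-H1} by $\|\Delta\vh\|_{0,\E}\|\vh\|_{0,\E} \lesssim \hE^{-1}\SemiNorm{\vh}_{1,\E}\|\vh\|_{0,\E}$, so after a Young inequality it is absorbed up to $\hE^{-2}\|\vh\|_{0,\E}^2$; and $\hE^{-2}\|\vh\|_{0,\E}^2 \lesssim \hE^{-2}\sum_{j}(\bm D_\E^j(\vh))^2$ (plus lower-order edge moments) is exactly what the bulk part of $\SE$ controls — more precisely, one must be slightly careful here since only $k(k-1)/2$ bulk moments are available, i.e.\ moments against $\calM_{k-2}(\E)$, which do not span all of $\VhE$; however, combined with the edge DoFs this still controls $\|\vh\|_{0,\E}$, or alternatively one works modulo constants (Poincar\'e) and uses the edge moments to pin down the constant.

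The genuinely delicate term is the boundary one, $\int_{\partial\E}(\nbfE\cdot\nabla\vh)\vh\,\mathrm{d}s$. Here I would \emph{not} use a crude Cauchy--Schwarz on $\partial\E$ directly, but rather exploit the negative-norm duality: bound it by $\NNorm{\nbfE\cdot\nabla\vh}_{-\frac12,\partial\E}\,\NNorm{\vh}_{\frac12,\partial\E}$. The second factor is controlled by $\hE^{-1}\|\vh\|_{0,\E} + \SemiNorm{\vh}_{1,\E}$ via the scaled trace inequality; the first factor, by Lemma~\ref{lemma:Neumann-trace-inequality}, is bounded by $\SemiNorm{\vh}_{1,\E} + \hE\|\Delta\vh\|_{0,\E} \lesssim \SemiNorm{\vh}_{1,\E}$ using Lemma~\ref{lemma:inverse-L2Laplacian-H1} again. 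Thus the boundary term is $\lesssim \SemiNorm{\vh}_{1,\E}(\hE^{-1}\|\vh\|_{0,\E} + \SemiNorm{\vh}_{1,\E})$, which after Young is absorbed into $\epsilon\SemiNorm{\vh}_{1,\E}^2 + C(\hE^{-2}\|\vh\|_{0,\E}^2)$.

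Putting these together yields $\SemiNorm{\vh}_{1,\E}^2 \lesssim \hE^{-2}\|\vh\|_{0,\E}^2$, and it remains to convert the right-hand side into $\SE(\vh,\vh)$. For this I would split $\vh$ as its $\partial\E$-average (or $\E$-average) plus the remainder: the average is captured by a single DoF (an edge moment $\bm D_\e^0$), while the zero-average remainder satisfies a Poincar\'e inequality $\|\vh - \overline{\vh}\|_{0,\E} \lesssim \hE\SemiNorm{\vh}_{1,\E}$ — but this reintroduces $\SemiNorm{\vh}_{1,\E}$, so a cleaner route is: use Lemma~\ref{lemma:norm-equivalence} to write $\hE^{-2}\|\PizEkmt\vh\|_{0,\E}^2 \approx \hE^{-2}\cdot\hE^2\|\bbf\|_{\ell^2}^2 = \|\bbf\|_{\ell^2}^2 \approx \sum_j \bm D_\E^j(\vh)^2$ (matching bulk moments with coefficients of $\PizEkmt\vh$), and similarly $\he^{-1}\|\Pitildeze\vh\|_{0,\e}^2 \approx \sum_i \bm D_\e^i(\vh)^2$ on each edge; then $\hE^{-2}\|\vh\|_{0,\E}^2$ is controlled by the projection pieces plus an orthogonal remainder handled by a trace/Poincar\'e argument on the kernel of these projections. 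The main obstacle is precisely this last bookkeeping step — showing $\hE^{-2}\|\vh\|_{0,\E}^2 \lesssim \SE(\vh,\vh) + \epsilon\SemiNorm{\vh}_{1,\E}^2$ with the scaling constants uniform under (\textbf{G1})--(\textbf{G2}) — since the available bulk DoFs only see $\Pbb_{k-2}(\E)$ while $\vh$ lives in the larger curved virtual space; the curvature-robustness of the edge norm equivalence (Lemma~\ref{lemma:norm-equivalence}) and the fact that $\VhE \supset \Pbb_0(\E)$ but $\not\supset \Pbb_k(\E)$ must be tracked carefully.
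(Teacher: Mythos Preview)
Your absorption step for the boundary term does not work. After bounding
\[
\int_{\partial\E}(\nbfE\cdot\nabla\vh)\,\vh\,\mathrm{d}s
\le \NNorm{\nbfE\cdot\nabla\vh}_{-\frac12,\partial\E}\,\NNorm{\vh}_{\frac12,\partial\E}
\lesssim \SemiNorm{\vh}_{1,\E}\bigl(\hE^{-1}\Norm{\vh}_{0,\E}+\SemiNorm{\vh}_{1,\E}\bigr),
\]
the right-hand side already contains a term $C\,\SemiNorm{\vh}_{1,\E}^2$ with $C$ a generic (possibly large) constant. No Young inequality will turn $C\,\SemiNorm{\vh}_{1,\E}^2$ into $\epsilon\,\SemiNorm{\vh}_{1,\E}^2$; the inequality you write down, $\SemiNorm{\vh}_{1,\E}^2 \lesssim \hE^{-2}\Norm{\vh}_{0,\E}^2$, simply does not follow. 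The subsequent step you flag as the ``main obstacle'' --- showing $\hE^{-2}\Norm{\vh}_{0,\E}^2 \lesssim \SE(\vh,\vh)+\epsilon\SemiNorm{\vh}_{1,\E}^2$ --- is therefore moot, and in any case you leave it unresolved.

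The paper's proof avoids both problems by never introducing $\Norm{\vh}_{0,\E}$ as an intermediary. The key observation you are missing is that, by the very definition of $\VhE$, both $\Delta\vh\in\Pbb_{k-2}(\E)$ and $(\nbfE\cdot\nabla\vh)_{|\e}\in\Pbbtilde_{k-1}(\e)$ can be expanded in the scaled monomial bases, so that the integrals appearing in the integration-by-parts identity are \emph{literally} the DoFs: $\int_\E m_j\vh = |\E|\,\bm D_\E^j(\vh)$ and $\int_\e \widetilde m_i\vh = |\e|\,\bm D_\e^i(\vh)$. A Cauchy--Schwarz inequality in $\ell^2$ over the expansion coefficients, combined with Lemma~\ref{lemma:norm-equivalence} to convert $\|\abf\|_{\ell^2}$ and $\|\bbf\|_{\ell^2}$ back into $\|\nbfE\cdot\nabla\vh\|_{0,\e}$ and $\|\Delta\vh\|_{0,\E}$, yields directly
\[
\SemiNorm{\vh}_{1,\E}^2 \lesssim \SemiNorm{\vh}_{1,\E}\,\SE(\vh,\vh)^{1/2},
\]
where Lemmas~\ref{lemma:inverse-L2Laplacian-H1}--\ref{lemma:inv012} are used to bound the coefficient norms by $\SemiNorm{\vh}_{1,\E}$. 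Dividing through gives the result with no absorption and no detour through $\Norm{\vh}_{0,\E}$.
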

\begin{proof}
Let $\vh$ in~$\VhE$. An integration by parts gives
\begin{equation}\label{nablav2}
\begin{split}
|\vh|_{1,\E} ^2 
& = -\int_\E \Delta \vh \vh\ \mathrm{d}K
 +\int_{\partial \E} (\nbfE \cdot \partial \vh) \vh\ \mathrm{d}s\\
& = -\int_\E \Delta \vh \vh\ \mathrm{d}K
  +\sum_{\e\in\EcalE}\int_{\e} (\nbfE \cdot \partial \vh) \vh\ \mathrm{d}s.
\end{split}
\end{equation}
Expanding~$\Delta \vh$ into a shifted and scaled monomial basis,
and using Lemmas~\ref{lemma:inverse-L2Laplacian-H1} and~\ref{lemma:norm-equivalence},
the first integral on the right-hand side of~\eqref{nablav2} can be dealt with as follows:
\begin{equation}\label{deltavv}
\begin{aligned}
-\int_\E \Delta \vh \vh\ \mathrm{d}K
&= -\sum_{j=1}^{k(k-1)/2}  b_j\int_\E m_j  \vh\ \mathrm{d}K
= -\sum_{j=1}^{k(k-1)/2}  b_j |\E| \bm{D}^j_{\e}(\vh)\\
&\leq  |\E| \|\bbf\|_{\ell^2}
\left( \sum_{j=1}^{k(k-1)/2}\bm{D}^j_{\e}(\vh)^2 \right)^{1/2}
\leq  |\E| \|\bbf\|_{\ell^2}\SE(\vh,\vh)^{1/2}\\
&\lesssim \hE\|\Delta \vh\|_{0,\E}\SE(\vh,\vh)^{1/2}
\lesssim |\vh|_{1,\E}\SE(\vh,\vh)^{1/2}.
\end{aligned}
\end{equation}
As for the boundary integral in~\eqref{nablav2}, first expanding $(\nbfE \cdot \nabla \vh)$
into the $\{\widetilde{m}_{i,\e}\}_{i=1}^k$ basis, then
employing Lemmas~\ref{lemma:inverse-L2Laplacian-H1},
\ref{lemma:inv012}, and~\ref{lemma:norm-equivalence},
we infer
\begin{equation}\label{nablavnv}
\begin{aligned}
& \sum_{\e\in\EcalE}\int_{\e}
(\nbfE \cdot \nabla \vh) \vh \ \mathrm{d}s 
= \sum_{\e\in\EcalE}\sum_{i=1}^{k} a_{i,\e}
\int_{ e}\widetilde{m}_{i,\e}\vh\ \mathrm{d}s
= \sum_{\e\in\EcalE} \sum_{i=1}^{k} a_{i,\e} |\e|\bm{D}^i_{\e}(\vh)\\
&\leq  \sum_{\e\in\EcalE}|\e| \|\abf\|_{\ell^2}
\left(\sum_{i=1}^{k}\bm{D}^i_{\e}(\vh)^2 \right)^{1/2}
\leq \sum_{\e\in\EcalE}|\e| \|\abf\|_{\ell^2}\SE(\vh,\vh)^{1/2}\\
&\lesssim \sum_{\e\in\EcalE} \he^{\frac12} 
\Norm{\nbfE \cdot \nabla \vh}_{0,\e} \SE(\vh,\vh)^{1/2}
\lesssim \hE^{\frac12}
\Norm{\nbfE \cdot \nabla \vh}_{0,\partial \E}
 \SE(\vh,\vh)^{1/2}\\
&\lesssim \NNorm{\nbfE \cdot \nabla \vh}_{-\frac12, \partial \E} \SE(\vh,\vh)^{1/2}
\lesssim (|\vh|_{1,\E} + \hE\|\Delta \vh\|_{0,\E})\SE(\vh,\vh)^{1/2}\\
&\lesssim |\vh|_{1,\E}\SE(\vh,\vh)^{1/2} .
\end{aligned}
\end{equation}
Collecting \eqref{deltavv} and \eqref{nablavnv} in \eqref{nablav2},
we obtain the bound in \eqref{Sstability2}.
\end{proof}

Consider the operator~$\Pi_0^0 : H^1(\E) \to \Rbb$ given by
\[
\Pi_0^0 v := \frac{1}{|\partial \E|}\int_{\partial \E} v\ \mathrm{d}s.
\]
The following scaled Poincar\'e inequality is valid:
\begin{equation}\label{poincare}
\|v-\Pi_0^0 v\|_{0,\E}\lesssim \hE\SemiNorm{v}_{1,\E}.
\end{equation}
To analyze the stability properties of the local discrete bilinear form, we need the following result,
which states the stability in~$H^1$ of the operator~$\Pitildenabla$ defined in~\eqref{RG-projection-bulk}--\eqref{RG-projection-zero-average}.

\begin{lemma}\label{PTcont}
Given an element~$\E$ and $v_h$ in~$\VhE$, we have
\begin{equation}\label{tpikbd}
|\Pitildenabla v_h|_{1,\E}
\lesssim \SemiNorm{v_h}_{1,\E}.
\end{equation}
\end{lemma}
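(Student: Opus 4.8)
The plan is to start from the defining relation \eqref{RG-projection-bulk} for $\Pitildenabla v_h$, test it with the polynomial $q_k = \Pitildenabla v_h \in \Pbb_k(\E)$, and bound the resulting right-hand side. This gives
\[
|\Pitildenabla v_h|_{1,\E}^2
= -\int_\E \Delta (\Pitildenabla v_h)\, v_h\ \mathrm{d}K
+ \sum_{\e\in\EcalE}\int_\e \Pitilde_{k-1}^{0,\e}(\nbfE\cdot\nabla \Pitildenabla v_h)\, v_h\ \mathrm{d}s,
\]
so the task reduces to estimating a bulk term and a boundary term, each linear in $v_h$ and linear in the polynomial $p := \Pitildenabla v_h$, and then dividing through by $|p|_{1,\E}$.

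Here is the subtlety: the naive bounds produce $\|v_h\|_{0,\E}$ and $\|v_h\|_{0,\partial\E}$, i.e. full norms rather than the $H^1$-seminorm, so the estimate would only be stable up to lower-order terms and would not give \eqref{tpikbd} as stated. The fix is to exploit the zero-average normalization. First I would replace $v_h$ by $v_h - \Pi_0^0 v_h$ in both integrals: in the bulk term this is legitimate because $\int_\E \Delta p \cdot c\ \mathrm{d}K = \int_{\partial\E}(\nbfE\cdot\nabla p)c\ \mathrm{d}s$ for a constant $c$, and combining this with the constant's contribution to the boundary term — using that $\Pitilde_{k-1}^{0,\e}$ reproduces constants and $\nbfE\cdot\nabla p$ restricted to $\e$ lies in $\Pbbtilde_{k-1}(\e)$ so that $\int_\e \Pitilde_{k-1}^{0,\e}(\nbfE\cdot\nabla p)\, c\ \mathrm{d}s = \int_\e (\nbfE\cdot\nabla p)\, c\ \mathrm{d}s$ — shows the two constant contributions cancel by the divergence theorem. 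Hence we may freely assume $v_h$ has zero boundary average, and then invoke the scaled Poincaré inequality \eqref{poincare} to get $\|v_h - \Pi_0^0 v_h\|_{0,\E}\lesssim \hE|v_h|_{1,\E}$ and, via the trace inequality, $\hE^{-1/2}\|v_h - \Pi_0^0 v_h\|_{0,\partial\E}\lesssim |v_h|_{1,\E}$.

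With that reduction in place the two terms are routine. For the bulk term: Cauchy–Schwarz, Lemma~\ref{lemma:inverse-L2Laplacian-H1} applied to $p$ (whose Laplacian is a polynomial) giving $\|\Delta p\|_{0,\E}\lesssim \hE^{-1}|p|_{1,\E}$, and then \eqref{poincare}, yield a bound $\lesssim |p|_{1,\E}|v_h|_{1,\E}$. For the boundary term: $L^2(\partial\E)$ Cauchy–Schwarz, then $\|\Pitilde_{k-1}^{0,\e}(\nbfE\cdot\nabla p)\|_{0,\e}\le\|\nbfE\cdot\nabla p\|_{0,\e}$ since $L^2$-projections are contractions, followed by Lemma~\ref{lemma:inv012} and Lemma~\ref{lemma:Neumann-trace-inequality} applied to $p$ to pass from $\|\nbfE\cdot\nabla p\|_{0,\partial\E}$ to $\hE^{-1/2}(|p|_{1,\E}+\hE\|\Delta p\|_{0,\E})\lesssim\hE^{-1/2}|p|_{1,\E}$, and the trace estimate on $v_h-\Pi_0^0 v_h$; this again contributes $\lesssim|p|_{1,\E}|v_h|_{1,\E}$. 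Summing and cancelling one factor of $|p|_{1,\E}=|\Pitildenabla v_h|_{1,\E}$ gives \eqref{tpikbd}. The main obstacle is precisely the bookkeeping around the zero-average shift and verifying the cancellation of the constant contributions; once that is handled, everything else is a direct application of the lemmas already established.
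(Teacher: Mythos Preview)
Your overall strategy is sound, but the execution contains a recurring slip that touches exactly the crux of the curved-edge setting: you twice assume that for the polynomial $p=\Pitildenabla v_h$ the restriction $(\nbfE\cdot\nabla p)_{|\e}$ lies in $\Pbbtilde_{k-1}(\e)$. On curved edges this is \emph{false}---it is precisely why $\Pitildenabla$ differs from $\Pinabla$; see Remark~\ref{remark:RG-projection-inconsistency}. This affects two places.

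\emph{(i) The constant shift.} Your claim $(\nbfE\cdot\nabla p)_{|\e}\in\Pbbtilde_{k-1}(\e)$ is wrong, but the identity $\int_\e\Pitildeze(\nbfE\cdot\nabla p)\,c\,\mathrm{d}s=\int_\e(\nbfE\cdot\nabla p)\,c\,\mathrm{d}s$ still holds, for a different reason: since $c\in\Pbbtilde_{k-1}(\e)$ and $\Pitildeze$ is the $L^2(\e)$-orthogonal projection onto that space, the defining orthogonality gives $\int_\e c\,(I-\Pitildeze)w\,\mathrm{d}s=0$ for any $w$. So the cancellation survives once justified correctly.

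\emph{(ii) The boundary bound.} Lemma~\ref{lemma:inv012} is stated for $v_h\in\VhE$ and its proof uses $\nbfE\cdot\nabla v_h\in\Pbbtilde_{k-1}(\e)$; the polynomial $p$ does not belong to $\VhE$ on a curved element, so that lemma does not apply to $\nbfE\cdot\nabla p$. The fix is simpler than your detour through Lemmas~\ref{lemma:inv012} and~\ref{lemma:Neumann-trace-inequality}: apply the ordinary trace inequality and a polynomial inverse estimate to $\nabla p\in[\Pbb_{k-1}(\E)]^2$ to obtain $\hE^{1/2}\|\nbfE\cdot\nabla p\|_{0,\partial\E}\lesssim|p|_{1,\E}$ directly.

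For comparison, the paper works with $\bar v_h=v_h-\Pi_0^0 v_h$ from the outset (so no separate cancellation argument is needed), and after substituting $q_k=\Pitildenabla\bar v_h$ integrates by parts \emph{back} to obtain
\[
|\Pitildenabla v_h|_{1,\E}^2=\int_\E\nabla\bar v_h\cdot\nabla\Pitildenabla\bar v_h\,\mathrm{d}K-\sum_{\e\in\EcalEc}\int_\e\bar v_h\,(I-\Pitildeze)(\nbfE\cdot\nabla\Pitildenabla\bar v_h)\,\mathrm{d}s.
\]
The first term is handled by Cauchy--Schwarz and only a curved-edge correction remains; this bypasses Lemma~\ref{lemma:inverse-L2Laplacian-H1} and the negative-norm machinery entirely. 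Your direct route (bounding bulk and boundary separately) is equally valid once the two points above are repaired.
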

\begin{proof}
Introduce $\bar v_h = v_h-\Pi_0^0 \vh$.
By definition of~$\Pitildenabla$,
we have $|\Pitildenabla \bar v_h|^2_{1,\E}  = |\Pitildenabla \vh|^2_{1,\E} $.  
Therefore, substituting $q_k = \Pitildenabla \bar v_h$ in \eqref{RG-projection-bulk} we obtain
$$
\begin{aligned}
& |\Pitildenabla \vh|_{1,\E}^2 = |\Pitildenabla \bar v_h |_{1,\E}^2 = \\
& = -\int_\E \bar v_h \Delta \Pitildenabla \bar \vh \ \mathrm{d}K
+\sum_{\e\in\calE_{s}^K}\int_\e\bar v_h
(\nbfE \cdot \nabla \Pitildenabla \bar v_h)  \ \mathrm{d}s
+ \sum_{\e\in\calE_{c}^K}\int_\e\bar v_h 
\Pitildeze (\nbfE \cdot \nabla \Pitildenabla \bar v_h)  \ \mathrm{d}s\\
&= \int_\E \nabla\bar v_h\cdot\nabla \Pitildenabla \bar v_h \ \mathrm{d}K
-\sum_{\e\in\calE_{c}^K}\int_\e
\bar v_h  (I- \Pitildeze)
(\nbfE \cdot \nabla \Pitildenabla \bar v_h)\ \mathrm{d}s\\
&\leq |\bar v_h|_{1,\E}|\Pitildenabla \bar v_h|_{1,\E}
+\|\nbfE \cdot \nabla \Pitildenabla \bar v_h \|_{0,\partial \E}
\sum_{\e\in\calE_{c}^K} \|\bar v_h\|_{0,\e}.
\end{aligned}
$$
Applying  standard trace inequality and the scaled Poincar\'e inequality, we infer
\[
\hE^{-\frac12}\|\bar v_h\|_{0,\e}
\lesssim \hE^{-1}\|\bar v_h\|_{0,\E}+|\bar v_h|_{1,\E}
\lesssim|\bar v_h|_{1,\E}
\]
and
\[
\hE^{\frac12}
\| \nbfE \cdot \nabla \Pitildenabla \bar v_h \|_{0,\partial \E}
\lesssim | \Pitildenabla \bar v_h |_{1,\E}
+\hE|\nabla \Pitildenabla \bar v_h |_{1,\E}
\lesssim |\Pitildenabla \bar v_h|_{1,\E}.
\]
Combining the three above estimate, we get the assertion:
\[
|\Pitildenabla \vh|_{1,\E}^2 
\lesssim |\bar v_h|_{1,\E}|\Pitildenabla \bar v_h|_{1,\E}
\lesssim |{\vh}|_{1,\E}|\Pitildenabla{\vh}|_{1,\E}.
\]
\end{proof}

We conclude this section, by proving stability estimates on the local discrete bilinear form~$\ahE(\cdot,\cdot)$;
stability estimates for the global discrete bilinear form~$\ah(\cdot,\cdot)$ are an immediate consequence.
\begin{proposition}\label{prop:Xnew}
Given an element~$\E$ and the discrete bilinear form~$\ahE(\cdot,\cdot)$ based on the stabilization in~\eqref{dofi-dofi},
we have the stability estimates
\[
\SemiNorm{\vh}^2_{1,\E}
\lesssim \ahE(\vh,\vh)
\lesssim \SemiNorm{\vh}^2_{1,\E} 
\qquad\qquad \forall \vh \in \VhE .
\]
\end{proposition}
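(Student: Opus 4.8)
The plan is to establish the two inequalities in the statement separately, using the decomposition of $\ahE$ into the polynomial part $\aE(\Pitildenabla\cdot,\Pitildenabla\cdot)$ and the stabilizing part $\SE((I-\Pitildenabla)\cdot,(I-\Pitildenabla)\cdot)$, together with the three tools already at hand: the continuity/coercivity of $\SE$ on the kernel of $\Pitildenabla$ (Propositions~\ref{proposition:stability-SE} and~\ref{proposition:stability-ahE}), and the $H^1$-stability of $\Pitildenabla$ (Lemma~\ref{PTcont}).

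\medskip

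\textbf{Upper bound.} First I would write $\ahE(\vh,\vh) = |\Pitildenabla\vh|_{1,\E}^2 + \SE((I-\Pitildenabla)\vh,(I-\Pitildenabla)\vh)$. The first term is bounded by $|\vh|_{1,\E}^2$ directly by Lemma~\ref{PTcont}. For the second term, note that $(I-\Pitildenabla)\vh$ lies in $\VhE$ and, by the averaging condition~\eqref{RG-projection-zero-average}, has zero average on $\partial\E$ (if $k=1$) or on $\E$ (if $k\geq 2$); hence the second bound of Proposition~\ref{proposition:stability-SE} applies and gives $\SE((I-\Pitildenabla)\vh,(I-\Pitildenabla)\vh) \lesssim |(I-\Pitildenabla)\vh|_{1,\E}^2 \leq 2|\vh|_{1,\E}^2 + 2|\Pitildenabla\vh|_{1,\E}^2 \lesssim |\vh|_{1,\E}^2$, again invoking Lemma~\ref{PTcont}. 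Summing yields $\ahE(\vh,\vh)\lesssim |\vh|_{1,\E}^2$.

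\medskip

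\textbf{Lower bound.} Here I would again split $|\vh|_{1,\E}^2 \leq 2|\Pitildenabla\vh|_{1,\E}^2 + 2|(I-\Pitildenabla)\vh|_{1,\E}^2$. The first summand equals the polynomial part of $\ahE(\vh,\vh)$, hence is $\leq 2\,\ahE(\vh,\vh)$ since the stabilizing term is nonnegative. For the second summand, $(I-\Pitildenabla)\vh$ belongs to $\VhE\cap\ker(\Pitildenabla)$, so the stability bound~\eqref{stability-bounds} (equivalently Proposition~\ref{proposition:stability-ahE}) gives $|(I-\Pitildenabla)\vh|_{1,\E}^2 \lesssim \SE((I-\Pitildenabla)\vh,(I-\Pitildenabla)\vh) \leq \ahE(\vh,\vh)$. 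Combining, $|\vh|_{1,\E}^2 \lesssim \ahE(\vh,\vh)$.

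\medskip

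The argument is essentially the standard VEM stability template, so no single step is a genuine obstacle; the point requiring care is that the usual argument relies on polynomial consistency ($\Pitildenabla$ being a projection and $\aE(\Pinabla\vh,\Pinabla\vh)$ reproducing polynomial energy), which fails on curved elements per Remark~\ref{remark:inconsistency}. The resolution is precisely that we never need consistency: the upper bound is carried entirely by the $H^1$-stability of $\Pitildenabla$ from Lemma~\ref{PTcont} (which holds on curved elements), and the lower bound only uses that $I-\Pitildenabla$ lands in $\ker(\Pitildenabla)$ together with the triangle inequality. One should also double-check that $\Pitildenabla(I-\Pitildenabla)\vh = 0$, i.e.\ that $\Pitildenabla$ is idempotent up to the constant fixed by~\eqref{RG-projection-zero-average}; this follows because $\Pitildenabla$ restricted to $\Pbb_n(\E)$ is the identity modulo the average condition, which is exactly what makes $(I-\Pitildenabla)\vh$ lie in the kernel on which $\SE$ is both continuous and coercive.
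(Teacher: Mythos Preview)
Your upper bound is fine and matches the paper's argument. The lower bound, however, has a real gap that is specific to the curved setting.

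You claim that $(I-\Pitildenabla)\vh$ belongs to $\VhE\cap\ker(\Pitildenabla)$, and then invoke the coercivity of $\SE$ (Proposition~\ref{proposition:stability-ahE}) on it. Neither membership holds on a curved element. First, $\Pitildenabla\vh\in\Pbb_k(\E)$ but $\Pbb_k(\E)\not\subset\VhE$: on a curved edge~$\e$ the normal derivative of a bulk polynomial is \emph{not} a mapped polynomial in $\Pbbtilde_{k-1}(\e)$, so $(I-\Pitildenabla)\vh\notin\VhE$. Since the proof of Proposition~\ref{proposition:stability-ahE} expands $\Delta\vh$ and $\nbfE\cdot\nabla\vh$ in the monomial bases of $\Pbb_{k-2}(\E)$ and $\Pbbtilde_{k-1}(\e)$, it genuinely requires membership in $\VhE$ and does not apply here. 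Second, your justification that $\Pitildenabla$ is idempotent (``$\Pitildenabla$ restricted to $\Pbb_n(\E)$ is the identity modulo the average condition'') is exactly what fails on curved elements: Remark~\ref{remark:RG-projection-inconsistency} states that $\Pitildenabla q_k\neq q_k$ for $k\ge1$ when $\E$ has a curved edge, so $\Pitildenabla(I-\Pitildenabla)\vh\neq0$ in general.

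The paper circumvents this by never applying the coercivity of $\SE$ to $(I-\Pitildenabla)\vh$. Instead it sets $\bar v_h=\vh-\Pi_0^0\vh$, which \emph{does} lie in $\VhE$ (constants belong to $\VhE$), applies Proposition~\ref{proposition:stability-ahE} once to $\bar v_h$ to get $|\vh|_{1,\E}^2=|\bar v_h|_{1,\E}^2\lesssim\SE(\bar v_h,\bar v_h)$, and only then splits
\[
\SE(\bar v_h,\bar v_h)\lesssim \SE((I-\Pitildenabla)\vh,(I-\Pitildenabla)\vh)+\SE(\Pitildenabla\bar v_h,\Pitildenabla\bar v_h).
\]
The first term is part of $\ahE(\vh,\vh)$; the second is controlled by the \emph{continuity} of $\SE$ (Proposition~\ref{proposition:stability-SE}, whose proof only uses trace bounds and therefore extends to any $H^1$ function) together with a Poincar\'e inequality and Lemma~\ref{PTcont}. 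The point is that continuity of $\SE$ is robust to the argument leaving $\VhE$, while coercivity is not; the paper arranges to use coercivity only where it is valid.
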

\begin{proof}
We first prove the lower bound.
Let $\bar v_h = \vh-\Pi_0^0 \vh$.
Using Proposition~\ref{proposition:stability-ahE}, we have
\[
\begin{aligned}
\aE(\vh,\vh) 
\!=\! \aE(\bar v_h,\bar v_h) 
& \!\lesssim\! \SE(\bar v_h,\bar v_h)
\!\lesssim\! \SE((I-\Pitildenabla)\bar v_h,(I-\Pitildenabla)\bar v_h)
\!+\! \SE(\Pitildenabla \bar v_h,\Pitildenabla \bar v_h).
\end{aligned}
\]
Since~$\Pitildenabla$ preserves constants, we can write
$(I-\Pitildenabla)\bar v_h = (I-\Pitildenabla)\vh$.
From Proposition~\ref{proposition:stability-SE}
applied to the second term on the right-hand side
of the inequality above, we infer
\[
\SE(\Pitildenabla \bar v_h,\Pitildenabla \bar v_h) 
\lesssim \hE^{-2}\|\Pitildenabla \bar v_h\|^2_{0,\E} +|\Pitildenabla \bar v_h|^2_{1,\E}.
\]
By the definition of~$\Pitildenabla$, it follows that
\[
\Pi_0^0\Pitildenabla \bar v_h 
= \Pi_0^0\bar v_h =\Pi_0^0(\vh-\Pi_0^0\vh)=0.
\]
Using~\eqref{poincare} and $|\Pitildenabla \bar v_h|^2_{1,\E}  = |\Pitildenabla \vh|^2_{1,\E} $, we deduce
\[
\SE(\Pitildenabla \bar v_h,\Pitildenabla \bar v_h) 
 \lesssim |\Pitildenabla \vh|^2_{1,\E} .
\]
Combining the above inequalities yields
\[
\aE(v,v) 
\lesssim  \SE((I-\Pitildenabla)\vh,(I-\Pitildenabla)\vh)  
    +|\Pitildenabla \vh|^2_{1,\E} = \ahE(\vh,\vh).
\]
Next, we focus on the upper bound.
Recalling that $\Pi_0^0\Pitildenabla \vh =\Pi_0^0 \vh $,
the definition of~$\ahE(\cdot,\cdot)$ in~\eqref{ak},
and inequality~\eqref{tpikbd},
we conclude the proof:
\[
\begin{aligned}
\ahE(\vh,\vh)
 &= |\Pitildenabla \vh|^2_{1,\E}+\SE((I-\Pitildenabla)\vh,
 (I-\Pitildenabla)\vh) \\
&\lesssim \SemiNorm{\vh}^2_{1,\E}+h^{-2}_{\E}\|(I-\Pitildenabla)\vh\|_{0,\E}^2+|(I-\Pitildenabla)\vh|_{1,\E}^2\\
&\lesssim \SemiNorm{\vh}^2_{1,\E}+|(I-\Pitildenabla)\vh|_{1,\E}^2
\lesssim \SemiNorm{\vh}^2_{1,\E} .
\end{aligned}
\]
\end{proof}

Upon using Proposition~\ref{prop:Xnew}
and the Brenner-Poincar\'e inequality~\cite{Brenner:2003},
we derive the following global stability estimates
and the well posedness of method~\eqref{VEM}.
\begin{theorem} \label{theorem:global-stability&well-posedness}
Given the global discrete bilinear form~$\ah(\cdot,\cdot)$ based on the local stabilizations in~\eqref{dofi-dofi},
we have the global stability estimates
\[
\SemiNorm{\vh}^2_{1,h}
\lesssim \ah(\vh,\vh)
\lesssim \SemiNorm{\vh}^2_{1,h}
\qquad \qquad \vh \in \Vh.
\]
As a consequence, method~\eqref{VEM} is well posed.
\end{theorem}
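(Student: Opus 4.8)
The plan is to obtain the two global bounds by a direct summation of the local estimates and then to upgrade the resulting coercivity to a genuine norm by means of a discrete Poincar\'e-Friedrichs inequality. First I would sum the element-wise estimates of Proposition~\ref{prop:Xnew}: since $\ah(\vh,\vh)=\sum_{\E\in\taun}\ahE(\vh,\vh)$ and $\SemiNorm{\vh}_{1,h}^2=\sum_{\E\in\taun}\SemiNorm{\vh}_{1,\E}^2$, the sandwich $\SemiNorm{\vh}_{1,\E}^2\lesssim\ahE(\vh,\vh)\lesssim\SemiNorm{\vh}_{1,\E}^2$ adds up to $\SemiNorm{\vh}_{1,h}^2\lesssim\ah(\vh,\vh)\lesssim\SemiNorm{\vh}_{1,h}^2$ for every $\vh\in\Vh$, with $h$-independent constants because the hidden constants in Proposition~\ref{prop:Xnew} are uniform under (G1)--(G2).

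For the well posedness of~\eqref{VEM} I would argue as in the standard (conforming or nonconforming) virtual element setting. Writing $\uh=u_g+\uh^0$ with $u_g\in\Vh^g(\taun)$ an arbitrary fixed lifting of the boundary moments (which exists by unisolvence of the DoFs, since the boundary moments in~\eqref{Vkg} can be prescribed freely) and $\uh^0\in\Vh^0(\taun)$, method~\eqref{VEM} is equivalent to finding $\uh^0\in\Vh^0(\taun)$ such that $\ah(\uh^0,\vh)=(\fh,\vh)_{0,\Omega}-\ah(u_g,\vh)$ for all $\vh\in\Vh^0(\taun)$. The right-hand side is a bounded linear functional on $\Vh^0(\taun)$: the term $\ah(u_g,\cdot)$ is controlled by the global upper bound just proved, while $(\fh,\cdot)_{0,\Omega}$ is bounded by Cauchy-Schwarz together with the $L^2$-stability of $\PizEkmt$ when $k\ge2$ (respectively, the edge-DoF estimates used in Proposition~\ref{proposition:stability-SE} when $k=1$). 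Hence, by Lax-Milgram, it only remains to show that $\ah(\cdot,\cdot)$ is coercive on $\Vh^0(\taun)$ with respect to the full broken norm $\Norm{\cdot}_{1,h}$.

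This last point is the real obstacle, because the lower bound of Proposition~\ref{prop:Xnew} only controls the broken \emph{seminorm}, which is not a norm on $H^1(\taun)$. The key observation is that any $\vh\in\Vh^0(\taun)\subset H^{1,nc}(\taun,k)$ has vanishing jump moments against $\Pbbtilde_{k-1}$ on every interior edge and vanishing moments against $\Pbbtilde_{k-1}$ on every boundary edge; this is exactly the structural hypothesis needed for the Brenner-Poincar\'e inequality for nonconforming spaces~\cite{Brenner:2003}, which yields $\Norm{\vh}_{0,\Omega}\lesssim\SemiNorm{\vh}_{1,h}$. Combining, $\Norm{\vh}_{1,h}^2\lesssim\SemiNorm{\vh}_{1,h}^2\lesssim\ah(\vh,\vh)$ on $\Vh^0(\taun)$, while boundedness of $\ah(\cdot,\cdot)$ in the same norm is immediate from the global upper bound; Lax-Milgram then closes the argument. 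I expect the only delicate verification to be that the weak interelement continuity and weak boundary conditions of the space (stated against the full mapped polynomial space $\Pbbtilde_{k-1}$, including on curved edges) indeed match the hypotheses under which~\cite{Brenner:2003} is applicable; the rest is bookkeeping.
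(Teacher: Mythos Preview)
Your proposal is correct and matches the paper's own argument: the paper simply states that the global stability bounds follow by summing the local estimates of Proposition~\ref{prop:Xnew}, and that well posedness then follows from the Brenner--Poincar\'e inequality~\cite{Brenner:2003}. Your write-up is in fact more detailed than the paper's (which gives no proof beyond that one sentence), and your flagged caveat about verifying the hypotheses of~\cite{Brenner:2003} on curved edges is a fair point that the paper does not explicitly address.
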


\section{Polynomial and virtual element approximation estimates} \label{section:interpolation}
In this section, we introduce error estimates by means of polynomial and virtual element functions.
Notably, we investigate the approximation properties
of the three following approximants for sufficiently regular functions~$v$:
\begin{itemize}
\item the~$L^2$ projection~$\vpi$ of $v$ into the polynomial space~$\Pbb_k(\E)$;
\item
the virtual element function~$\vpE$ defined as the solution to
\begin{equation}\label{vpk}
\begin{cases}
-\Delta \vpE = -\PizEkmt\Delta v
& \text{in } \E \\
\nbfE\cdot \nabla \vpE =\Pitildeze (\nbfE \cdot \nabla v) - \cpartialE
& \text{on } \e\in\EcalE \\
\int_{\partial\E}(\vpE -v)\ \mathrm{d}s = 0,
\end{cases}
\end{equation}
where
\[
\cpartialE:=
\begin{cases}
    -\vert \partial\E\vert^{-1} \int_\E \Delta v    & \text{if } k=1 \\
    0                                               & \text{if } k\ge2; \\
\end{cases}
\]
\item the DoFs interpolant~$v_I^\E$ in~$\VhE$ of~$v$
defined as
\begin{equation}\label{vik}
\begin{cases}
\int_\E (\vIE-v) m\ \mathrm{d}K = 0
& \forall m \in \calM_{k-2}(\E) \\
\int_\e (\vIE-v) \widetilde{m}\ \mathrm{d}s  =0 
&  \forall \widetilde{m} \in \widetilde{\calM}_{k-1}(\e), \quad \forall e\in \EcalE.
\end{cases}
\end{equation}
\end{itemize}
The functions~$\vpi$ and~$\vIE$ are well defined by construction.
Also~$\vpE$ is well defined; in fact, problem~\eqref{vpk} is well posed
as compatibility conditions are valid.
For~$k\ge2$,
\[
\begin{split}
& \sum_{\e\in\EcalE} \int_\e \nbfE\cdot \nabla \vpE \mathrm{d}s 
  = \sum_{\e\in\EcalE} \int_\e
   \Pitilde_{k}^{0,\e} (\nbfE\cdot \nabla v) \mathrm{d}s 
  = \sum_{\e\in\EcalE} \int_\e
\nbfE\cdot \nabla v \ \mathrm{d}s \\
& = -\int_\E\Delta v\mathrm{d}K 
  =  -\int_\E\PizEkmt\Delta v\mathrm{d}K
  = -\int_\E \Delta \vpE \mathrm{d}K ;
\end{split}
\]
for~$k=1$,
\[
\begin{split}
& \sum_{\e\in\EcalE} \int_\e \nbfE\cdot \nabla \vpE \mathrm{d}s 
  = \sum_{\e\in\EcalE} \int_\e
   \Pitilde_{k}^{0,\e} (\nbfE\cdot \nabla v) \mathrm{d}s 
   - \cpartialE \vert\partial\E\vert 
  = \sum_{\e\in\EcalE} \int_\e
     \nbfE\cdot \nabla v \ \mathrm{d}s 
     - \cpartialE \vert\partial\E\vert \\
&  = -\int_\E\Delta v\mathrm{d}K  + \int_\E\Delta v\mathrm{d}K  
  =  0
  = -\int_\E \Delta \vpE \mathrm{d}K.
\end{split}
\]
While~$\vpE$ can be constructed elementwise providing a piecewise discontinuous
virtual element approximant,
the DoFs interpolant~$\vIE$ yields a global nonconforming virtual element interpolant~$\vI$
by coupling the face DoFs. The discontinuous approximant $\vpE$ will be instrumental in proving the approximation properties of $\vIE$.

\begin{assumption} \label{assumption:eta}
Henceforth, the regularity parameter~$\ord$ of the curved boundary
introduced in Section~\ref{section:introduction} satisfies $\ord \ge k$.
\end{assumption}

Polynomial error estimates for~$\vpi$ are well know; see e.g.\cite{Brenner-Scott:2008}:
\[
\|v-\vpi\|_{0,\E}+\hE|v-\vpi|_{1,\E}
\lesssim \hE^{k+1} |v|_{k+1,\E}
\qquad \forall v\in H^{k+1}(\E).
\]
To prove interpolation error estimates,
we present an auxiliary error estimate,
which can be proven proceeding along the same lines as in~\cite[Lemma~$3.2$]{BeiraodaVeiga-Russo-Vacca:2019}
for the operator $\Pitilde_{n}^{0,\e}$.

\begin{lemma}\label{lemma:proes}
Let~$n \in \Nbb$ and the regularity parameter~$\eta$ of the curved boundary satisfy~$\eta\ge n+1$.
Then, given an element~$\E$ and any of its edges~$\e$,
for all $0\leq m \leq s \leq n + 1$, we have
\[
|v - \Pitilde_{n}^{0,\e}v|_{m,\e}
\lesssim \he^{s-m}\Norm{v}_{s,\e}
\qquad \forall v \in H^s(\e) .
\]
\end{lemma}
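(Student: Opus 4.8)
The plan is to pull the problem back to the reference subinterval~$\Ie \subset I = [0,1]$ through the parametrization~$\gammae = \gamma|_{\Ie}$, to apply classical one dimensional polynomial approximation estimates on~$\Ie$, and to transfer the result back to~$\e$. The starting observation is that, since~$\gamma$ and~$\gamma^{-1}$ are fixed once and for all and of class~$\mathcal{C}^\eta$ with~$\eta \ge n+1$, the maps~$\gammae$ and~$\gammae^{-1}$ belong to~$W^{m,\infty}$ for every~$0 \le m \le n+1$ with norms bounded by a constant depending only on~$\gamma$; moreover, the weight~$\omega := \Norm{\gammae'}$ is bounded from above and from below by positive constants depending only on~$\gamma$, and by Remark~\ref{lengthofedges} one has~$\he \approx \elle \approx h_{\Ie}$.

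First I would record the change of variables estimates. For a function~$w$ on~$\e$, set~$\widehat w := w \circ \gammae$. Repeated application of the chain rule to~$\gammae$ and to~$\gammae^{-1}$, together with the identity~$\mathrm{d}s = \omega\,\mathrm{d}t$ and the uniform bounds above, gives, for all~$0 \le m \le n+1$,
\[
\SemiNorm{\widehat w}_{m,\Ie} \lesssim \sum_{j=0}^{m} \SemiNorm{w}_{j,\e}, \qquad \SemiNorm{w}_{m,\e} \lesssim \sum_{j=0}^{m} \SemiNorm{\widehat w}_{j,\Ie},
\]
with constants independent of~$\e$. Next, the change of variables~$s = \gammae(t)$ in~\eqref{L2-projection-edge} shows that~$\widehat{\Pitilde_n^{0,\e}v}$ is the~$\omega$-weighted~$L^2(\Ie)$ projection of~$\widehat v$ onto~$\Pbb_n(\Ie)$, i.e.,
\[
\int_{\Ie} q_n\,\bigl(\widehat v - \widehat{\Pitilde_n^{0,\e}v}\bigr)\,\omega\,\mathrm{d}t = 0 \qquad \forall q_n \in \Pbb_n(\Ie).
\]
Since the weighted and the unweighted~$L^2(\Ie)$ norms are equivalent with constants depending only on~$\gamma$, this projection is quasi-optimal also in the unweighted~$L^2(\Ie)$ norm.

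Then I would run the standard one dimensional argument on~$\Ie$. There exists~$p \in \Pbb_n(\Ie)$ (for instance an averaged Taylor polynomial) with~$\SemiNorm{\widehat v - p}_{m,\Ie} \lesssim h_{\Ie}^{s-m}\SemiNorm{\widehat v}_{s,\Ie}$ for all~$0 \le m \le s \le n+1$. By quasi-optimality, $\Norm{\widehat v - \widehat{\Pitilde_n^{0,\e}v}}_{0,\Ie} \lesssim \Norm{\widehat v - p}_{0,\Ie} \lesssim h_{\Ie}^{s}\SemiNorm{\widehat v}_{s,\Ie}$; combining this with a polynomial inverse inequality on~$\Ie$, which gives~$\SemiNorm{p - \widehat{\Pitilde_n^{0,\e}v}}_{m,\Ie} \lesssim h_{\Ie}^{-m}\Norm{p - \widehat{\Pitilde_n^{0,\e}v}}_{0,\Ie}$, and a triangle inequality, I would obtain
\[
\SemiNorm{\widehat v - \widehat{\Pitilde_n^{0,\e}v}}_{m,\Ie} \lesssim h_{\Ie}^{s-m}\SemiNorm{\widehat v}_{s,\Ie}, \qquad 0 \le m \le s \le n+1.
\]
Transferring back to~$\e$ with the change of variables estimates, bounding~$\SemiNorm{\widehat v}_{s,\Ie} \lesssim \Norm{v}_{s,\e}$, and absorbing the powers~$h_{\Ie}^{s-j} \lesssim h_{\Ie}^{s-m}$ for~$j \le m$ (using~$h_{\Ie} \lesssim 1$) together with~$h_{\Ie} \approx \he$, I would conclude~$\SemiNorm{v - \Pitilde_n^{0,\e}v}_{m,\e} \lesssim \he^{s-m}\Norm{v}_{s,\e}$, which is the claim.

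The delicate point — in fact the only one — is the uniformity in~$\e$ of all the constants above, namely those produced by the chain rule expansions and by the weight~$\omega$. This is exactly where the hypothesis~$\eta \ge n+1$ and the fact that~$\gamma$ is fixed once and for all enter: they guarantee that~$\gammae$ and~$\gammae^{-1}$ have uniformly bounded~$W^{m,\infty}$ norms up to order~$n+1$, and that~$\omega$ stays uniformly bounded away from~$0$ and~$\infty$ over all curved edges of all meshes in the sequence. Everything else is the classical polynomial approximation machinery, which is also why the argument is "along the same lines" as the corresponding result for the conforming curved virtual elements.
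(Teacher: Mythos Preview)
Your proposal is correct and follows precisely the route the paper indicates: the paper does not give a self-contained proof but states that the result ``can be proven proceeding along the same lines as in~\cite[Lemma~$3.2$]{BeiraodaVeiga-Russo-Vacca:2019}'', and your argument---pulling back to~$\Ie$ via~$\gammae$, identifying~$\widehat{\Pitilde_n^{0,\e}v}$ as the~$\omega$-weighted~$L^2$ projection, invoking standard one-dimensional polynomial approximation plus an inverse inequality, and transferring back with uniform chain-rule bounds guaranteed by~$\eta\ge n+1$---is exactly that argument. The only point you leave implicit is the case of noninteger~$m,s$, which the paper uses later; this follows by interpolation and is entirely routine.
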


We also recall the properties of the Stein's extension operator~$E$ in~\cite[Chapter~VI, Theorem~$5$]{Stein:1970}.
\begin{lemma} \label{lemma:Stein}
Given a Lipschitz domain~$\Omega$ in~$\Rbb^2$ and $s \in \Rbb$, $s \ge 0$,
there exists an extension operator
$E: H^s(\Omega)\rightarrow H^s(\bbR^2)$
such that
\begin{equation}\label{Extension}
Ev |_\Omega = v
\quad\text{and}\quad
\|Ev\|_{s,\bbR^d}\lesssim \Norm{v}_{s,\Omega}
\qquad\qquad \forall v\in H^s(\Omega).
\end{equation}
The hidden constant depends on~$s$
but not on the diameter of~$\Omega$.
\end{lemma}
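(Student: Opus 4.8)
The plan is to derive the statement directly from Stein's extension theorem for bounded Lipschitz (``minimally smooth'') domains, see~\cite[Chapter~VI, Theorem~$5$]{Stein:1970}, the only thing to be supplied being a scaling argument that turns the domain-dependent constant of Stein's theorem into one that is insensitive to the size of~$\Omega$. First I would recall Stein's result in the form I need: for a \emph{fixed} bounded Lipschitz domain there is a bounded linear operator $E$ from $H^s$ into $H^s(\Rbb^2)$ with $Ev|_\Omega=v$; moreover $E$ can be taken to be the same operator for every integer~$s\ge 0$ (it is built from a regularized distance, reflections across the local graph parametrizations of $\partial\Omega$, and a partition of unity), and the noninteger cases follow by interpolation. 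The constant produced by this construction depends only on the Lipschitz character of~$\partial\Omega$ --- that is, on the Lipschitz constants and the number of the local graph charts --- and not on anything else intrinsic to the domain.

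Next I would reduce a general~$\Omega$ to a reference configuration of unit size. Let $D$ be the diameter of~$\Omega$, let $\Phi(\bm x)=\bm x/D$ and $\widehat\Omega=\Phi(\Omega)$, so that $\widehat\Omega$ has unit diameter. The key observation is that dilations leave the Lipschitz character unchanged: $\widehat\Omega$ has the same Lipschitz constants and the same number of charts as~$\Omega$. Hence Stein's theorem applied on~$\widehat\Omega$ yields an operator $\widehat E$ whose norm is controlled by a constant depending only on~$s$ and on that scale-invariant Lipschitz character. For $v\in H^s(\Omega)$ I set $\widehat v=v\circ\Phi^{-1}$ and define $Ev:=(\widehat E\widehat v)\circ\Phi$; then $Ev|_\Omega=v$, and a change of variables in each Sobolev (semi)norm --- which multiplies the $j$-th seminorm squared by a fixed power of~$D$ on both sides of the inequality $\|\widehat E\widehat v\|_{s,\Rbb^2}\lesssim\|\widehat v\|_{s,\widehat\Omega}$ --- transfers the estimate back to~$\Omega$ with a constant that no longer involves~$D$, giving~\eqref{Extension}.

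The step I expect to be delicate is precisely this last bookkeeping of the dilation exponents, because the various orders of differentiation scale with different powers of~$D$: to keep the constant genuinely independent of~$D$ one works order by order, using that $\widehat E$ is bounded $H^j\to H^j$ on the fixed reference domain for every $j\le s$, and collects the corresponding powers of~$D$; equivalently, one phrases the transferred estimate in terms of the dimensionally consistent, scaled norms in which it will actually be invoked in the approximation analysis of Section~\ref{section:interpolation}. Beyond this scaling care, no new idea is required: the proof is a short reduction to~\cite{Stein:1970} together with the scale-invariance of the Lipschitz character of~$\partial\Omega$.
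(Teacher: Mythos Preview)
The paper provides no proof of this lemma at all: it is stated as a recall of a known result with a bare citation to \cite[Chapter~VI, Theorem~$5$]{Stein:1970}, and the text moves on immediately. So your proposal already goes well beyond what the paper does.

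Your reduction to Stein's theorem is correct, and you are right to flag the diameter-independence claim as the only nontrivial point. Your scaling argument is the natural approach, and your caveat is exactly the real issue: under the dilation $\Phi(\bm x)=\bm x/D$ the $j$-th seminorm picks up a factor $D^{j-1}$ (in two dimensions), so the various orders in the full $H^s$ norm do not transform uniformly, and a naive transfer of $\|\widehat E\widehat v\|_{s,\Rbb^2}\lesssim\|\widehat v\|_{s,\widehat\Omega}$ produces both positive and negative powers of~$D$. Your proposed fix---use that Stein's operator is the \emph{same} operator for every integer order and apply the boundedness $H^j\to H^j$ separately, then reassemble---is the standard way to handle this and yields a diameter-independent constant for each seminorm; the noninteger case then follows by interpolation. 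Equivalently, as you note, one can state the bound in the scaled norms $\sum_j D^{2j}|\cdot|_j^2$, which is dilation-invariant and is in any case the dimensionally consistent form used in local approximation arguments. Either route completes the lemma; the paper's own applications (end of Lemma~\ref{v-vpk} and end of Lemma~\ref{v-pikv2}) invoke the extension only on the fixed physical domain~$\Omega$, so the diameter independence is not actually exercised there.
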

We are in a position to show the first local interpolation result.
\begin{lemma}\label{v-vpk}
Given~$\vpE$ as in~\eqref{vpk},
then, for all $v$ in~$H^{s+1}(\E)$, $1 \le s \le k$,
we have
\[
\SemiNorm{v-\vpE}_{1,h}
\lesssim h^s\Norm{v}_{s+1,\Omega}.
\]
\end{lemma}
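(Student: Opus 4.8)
The plan is to estimate $\SemiNorm{v - \vpE}_{1,\E}$ element by element and then sum. The natural route is to compare $\vpE$ with the polynomial projection $\vpi$ (or equally well with any good polynomial approximant of $v$), exploit the already-available polynomial estimate $\SemiNorm{v - \vpi}_{1,\E} \lesssim \hE^s |v|_{s+1,\E}$, and reduce the problem to bounding $\SemiNorm{\vpE - \vpi}_{1,\E}$. Since $\vpE - \vpi \in H^1(\E)$ and has $\Delta(\vpE - \vpi) \in \Pbb_{k-2}(\E)$, I would integrate by parts on $\E$: writing $w := \vpE - \vpi$ (after subtracting its average so Poincar\'e applies), one gets
\[
|w|_{1,\E}^2 = -\int_\E \Delta w \, \bar w \ \mathrm{d}K + \sum_{\e \in \EcalE} \int_\e (\nbfE \cdot \nabla w)\, \bar w \ \mathrm{d}s,
\]
where $\bar w = w - \Pi_0^0 w$. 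The first term is controlled by $\hE \Norm{\Delta w}_{0,\E} |w|_{1,\E}$ via the scaled Poincar\'e inequality~\eqref{poincare}, and then by $|w|_{1,\E}$ times a data term using Lemma~\ref{lemma:inverse-L2Laplacian-H1} together with the defining relation $\Delta \vpE = \PizEkmt \Delta v$, which makes $\Delta w = \PizEkmt \Delta v - \Delta \vpi$ amenable to the polynomial estimate $\Norm{\PizEkmt \Delta v - \Delta v}_{0,\E} \lesssim \hE^{s-1} |v|_{s+1,\E}$ plus $\Norm{\Delta v - \Delta \vpi}_{0,\E} \lesssim \hE^{s-1}|v|_{s+1,\E}$.

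The boundary term is the delicate one. Here I would use the Neumann-trace machinery: bound $\sum_\e \int_\e (\nbfE \cdot \nabla w)\bar w$ by $\hE^{1/2}\Norm{\nbfE \cdot \nabla w}_{0,\partial\E}$ times $\hE^{-1/2}\Norm{\bar w}_{0,\partial\E}$, control the second factor by $|\bar w|_{1,\E} \approx |w|_{1,\E}$ via trace and Poincar\'e, and the first factor by passing to the scaled negative norm through Lemma~\ref{lemma:inv012}, i.e. $\hE^{1/2}\Norm{\nbfE\cdot\nabla w}_{0,\partial\E} \lesssim \NNorm{\nbfE \cdot \nabla w}_{-1/2,\partial\E}$. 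Wait — this last step only works if $\nbfE\cdot\nabla w$ lies in $\Pbbtilde_{k-1}(\e)$, which is true for $\vpE$ but not for $\vpi$; so I would instead bound the boundary term differently, writing $\nbfE \cdot \nabla w = (\Pitildeze(\nbfE\cdot\nabla v) - \cpartialE) - \nbfE\cdot\nabla\vpi$, add and subtract $\nbfE\cdot\nabla v$, and treat $\Pitildeze(\nbfE\cdot\nabla v) - \nbfE \cdot \nabla v$ by Lemma~\ref{lemma:proes} (using Assumption~\ref{assumption:eta}, $\eta \ge k$), the term $\nbfE\cdot\nabla(v - \vpi)$ by a scaled trace inequality on $v - \vpi$ combined with the polynomial estimate for $v-\vpi$ up to $H^2$, and $\cpartialE$ (nonzero only when $k=1$, in which case $s=1$) by a direct estimate of $\vert\partial\E\vert^{-1}\int_\E \Delta v$. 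Each contributes $\lesssim \hE^{s-1/2}\Norm{v}_{s+1,\E}$ to $\Norm{\nbfE\cdot\nabla w}_{0,\partial\E}$-type quantities, giving the boundary term $\lesssim \hE^{s}\Norm{v}_{s+1,\E}\,|w|_{1,\E}$.

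Combining, $|w|_{1,\E}^2 \lesssim \hE^s \Norm{v}_{s+1,\E} |w|_{1,\E}$, hence $|w|_{1,\E} \lesssim \hE^s\Norm{v}_{s+1,\E}$; the triangle inequality $\SemiNorm{v - \vpE}_{1,\E} \le \SemiNorm{v - \vpi}_{1,\E} + \SemiNorm{w}_{1,\E}$ then closes the local estimate, and squaring and summing over $\E \in \taun$ (using $\hE \le h$ and $\sum_\E \Norm{v}_{s+1,\E}^2 = \Norm{v}_{s+1,\Omega}^2$) yields $\SemiNorm{v - \vpE}_{1,h} \lesssim h^s \Norm{v}_{s+1,\Omega}$. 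The main obstacle I anticipate is precisely the handling of the boundary term: one must be careful that $\vpi$ restricted to a curved edge is not a mapped polynomial, so the polynomial inverse inequality of Lemma~\ref{lemma:inv012} cannot be applied to $\nbfE\cdot\nabla w$ directly, and instead the decomposition above — isolating the genuinely polynomial piece $\nbfE\cdot\nabla\vpE \in \Pbbtilde_{k-1}(\e)$ and estimating the projection error via Lemma~\ref{lemma:proes} — is what makes the argument go through. The use of the Stein extension (Lemma~\ref{lemma:Stein}) enters when the polynomial and projection estimates on $\e$ require a trace of $v$ on the edge with the right scaling in $\hE$.
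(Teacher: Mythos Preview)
Your argument is correct, but the paper takes a more direct route that avoids the intermediate polynomial $\vpi$ entirely. Since the definition of $\vpE$ already ensures $\int_{\partial\E}(v-\vpE)=0$, one can integrate by parts directly on $|v-\vpE|_{1,\E}^2$: the bulk term becomes $-( (I-\PizEkmt)\Delta v,\, v-\vpE)_{0,\E}$ and the boundary term becomes $\sum_\e ( (I-\Pitildeze)(\nbfE\cdot\nabla v),\, v-\vpE)_{0,\e}$ (the $\cpartialE$ contribution vanishes exactly because $v-\vpE$ has zero boundary average). One then bounds $\|v-\vpE\|_{0,\E}$ and $\|v-\vpE\|_{0,\partial\E}$ by Poincar\'e and trace, and the remaining factors are precisely the projection errors you end up estimating anyway. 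This saves the triangle-inequality detour through $\vpi$, the need to subtract the average of $w$, and the separate estimate of $\Delta(v-\vpi)$ and $\nbfE\cdot\nabla(v-\vpi)$; in exchange, your approach has the mild advantage of never needing $v-\vpE$ itself on the boundary. A small remark: your invocation of Lemma~\ref{lemma:inverse-L2Laplacian-H1} for the bulk term is unnecessary --- you bound $\|\Delta w\|_{0,\E}$ directly from polynomial approximation, not via an inverse inequality. Both proofs ultimately hinge on Lemma~\ref{lemma:proes} for the curved-edge projection error and on the Stein extension to pass from $\|\nabla v\|_{s-1/2,\Gamma_i}$ to $\|v\|_{s+1,\Omega}$.
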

\begin{proof}
An integration by part yields

\[
\begin{aligned}
& |v-\vpE|_{1,\E}^2
= -(\Delta (v-\vpE), v-\vpE)_{0,\E}
+\sum_{\e\in\EcalE}
( \nbfE \cdot \nabla (v-\vpE), v-\vpE )_{0,\e}\\
& = -((I-\PizEkmt)\Delta v, v-\vpE)_{0,\E}
   + \sum_{\e\in\EcalE}  ( (I -\Pitildeze) 
      (\nbfE \cdot \nabla v) , v - \vpE)_{0,\e}
   + (\cpartialE,v-\vpE)_{0,\partial\E}  \\
& = -((I-\PizEkmt)\Delta v, v-\vpE)_{0,\E}
   + \sum_{\e\in\EcalE}  ( (I -\Pitildeze) 
      (\nbfE \cdot \nabla v) , v - \vpE)_{0,\e}.
\end{aligned}
\]
We deduce
\[
|v-\vpE|_{1,\E}^2\leq \|(I-\PizEkmt)\Delta v\|_{0,\E}\|v-\vpE\|_{0,\E}
+ \|v-\vpE\|_{0,\partial \E}
\sum_{\e\in\EcalE} \|(I -\Pitildeze) 
(\nbfE \cdot \nabla v)\|_{0,\e}.
\]
Using polynomial approximation properties
and Lemma~\ref{lemma:proes} gives
\[
\|(I-\PizEkmt)\Delta v\|_{0,\E}
\lesssim \hE^{\s-1}\|\Delta v\|_{\s-1,\E}\lesssim \hE^{\s-1}\SemiNorm{v}_{\s+1,\E} .
\]
We split the edge contributions into curved and straight edges terms.
As for the curved edges terms,
we use Lemma~\eqref{lemma:proes} with~$n=k-1$
(recalling Assumption~\ref{assumption:eta})
and write
\[
\sum_{\e\in\EcalEc} \Norm{(I - \Pitildeze) (\nbfE \cdot \nabla v)}_{0,\e}
\lesssim \sum_{\e\in\EcalEc} \hE^{\s-\frac12} \| \nbfE \cdot \nabla v \|_{\s-\frac12,\e}
\lesssim \hE^{\s-\frac12} \sum_{\e\in\EcalEc}\| \nabla v \|_{\s-\frac12,\e}.
\]
As for the straight edges terms,
since~$\Pitildeze$ is the standard $L^2(\e)$ projection onto polynomials of maximum degree~$k-1$ over~$\e$,
we use the trace inequality and write
\[
\begin{split}
& \sum_{\e\in\EcalEs} \Norm{(I - \Pitildeze) (\nbfE \cdot \nabla v)}_{0,\e}
\le \sum_{\e\in\EcalEs} \Norm{\nbfE \cdot \nabla (v-\qk)}_{0,\e}\\
& \le \sum_{\e\in\EcalEs} \Norm{\nabla (v-\qk)}_{0,\e}
  \lesssim \hE^{-\frac12} \SemiNorm{v-\qk}_{1,\E} 
            + \hE^{\frac12} \SemiNorm{v-\qk}_{2,\E}
        \qquad  \forall \qk \in \Pbb_k(\E).
\end{split}
\]
Standard polynomial approximation properties imply
\[
\sum_{\e\in\EcalEs} \Norm{(I - \Pitildeze) (\nbfE \cdot \nabla v)}_{0,\e}
\lesssim \hE^{\s-\frac12} \SemiNorm{v}_{\s+1,\E}.
\]
By the Poincar\'e inequality and trace inequality,
we infer
\[
\|v-\vpE\|_{0,\E}
\lesssim \hE|v-\vpE|_{1,\E}
\]
and
\[
\|v-\vpE\|_{0,\partial \E}
\lesssim \hE^{-\frac12} \|v-\vpE\|_{0,\E} + \hE^{\frac12} |v-\vpE|_{1,\E}
\lesssim \hE^{\frac12} |v-\vpE|_{1,\E}.
\]
Collecting the above estimates leads us to
\[
|v-\vpE|_{1,\E}\lesssim \hE^{\s}(\SemiNorm{v}_{\s+1,\E}
                     + \sum_{\e \in \EcalEc} \Norm{\nabla v}_{\s-\frac12,\e}).
\]
Summing over all the elements, we arrive at
\[
|v-\vpE|_{1,h}
\lesssim h^\s(\SemiNorm{v}_{\s+1,\E}
              +\sum_{i=1}^N\Norm{\nabla v}_{\s-\frac12,\Gamma_i}).
\]
To end up with error estimates involving terms only in the domain~$\Omega$ and not on its boundary,
we use the Stein's extension operator
of Lemma~\ref{lemma:Stein}.
For any curve $\Gamma_i$ on the boundary of~$\partial\Omega$,
let $\mathcal C_i$ be a domain in $\bbR^2$
with part of its boundary given by $\partial \mathcal{C}_i\in C^{k,1}$.
Then, applying the standard trace theorem on smooth domains
and the stability of the (vector valued version) Stein's extension operator in~\eqref{Extension}, we get
\[
\Norm{\nabla v}_{\s-\frac12,\Gamma_i}
=\|E \nabla v\|_{\s-\frac12,\Gamma_i}
\leq \|E \nabla v\|_{\s-\frac12,\partial\mathcal{C}_i}
\lesssim \|E \nabla v\|_{\s-\frac12,\mathcal{C}_i}
\leq \|E \nabla v\|_{k,\bbR^d}\lesssim \Norm{v}_{\s+1,\Omega}.
\]
\end{proof}
Thanks to the approximation properties
of the piecewise discontinuous interpolant~$\vpE$
we deduce the approximation properties of the global nonconforming interpolant~$\vIE$.
\begin{lemma} \label{lemma:DoFs-interpolant-estimates}
For all~$v$ in~$H^{s+1}(\Omega)$,
$1 \le s \le k$, we have
\[
\SemiNorm{v-\vIE}_{1,h}
\lesssim h^s \Norm{v}_{s+1,\Omega}.
\]
\end{lemma}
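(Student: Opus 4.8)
The plan is to estimate $\SemiNorm{v-\vIE}_{1,h}$ by inserting the discontinuous approximant $\vpE$ from \eqref{vpk} as an intermediate term and using the triangle inequality,
\[
\SemiNorm{v-\vIE}_{1,h}
\le \SemiNorm{v-\vpE}_{1,h} + \SemiNorm{\vpE-\vIE}_{1,h}.
\]
The first term is already controlled by Lemma~\ref{v-vpk}, which gives $\SemiNorm{v-\vpE}_{1,h}\lesssim h^s\Norm{v}_{s+1,\Omega}$. So the task reduces to bounding the second term, elementwise, by the same quantity. Since both $\vpE$ and $\vIE$ lie in $\VhE$ and the stabilization $\SE(\cdot,\cdot)$ is coercive on $H^1$-seminorm modulo constants (Proposition~\ref{proposition:stability-ahE}, together with Proposition~\ref{prop:Xnew}), it suffices to estimate $\SE(\vpE-\vIE,\vpE-\vIE)$, i.e.\ to bound the DoFs of the difference $\vpE-\vIE$.

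\textbf{Key steps.} First I would compute the degrees of freedom of $w_h := \vpE - \vIE$. By the defining equations \eqref{vik} for $\vIE$, the bulk moments of $\vIE$ against $\calM_{k-2}(\E)$ equal those of $v$, and similarly the edge moments against $\widetilde\calM_{k-1}(\e)$ equal those of $v$; hence $\bm D_\E^j(w_h) = \bm D_\E^j(\vpE - v)$ and $\bm D_\e^i(w_h) = \bm D_\e^i(\vpE - v)$, so everything is expressed through $\vpE - v$, which Lemma~\ref{v-vpk} and its internal estimates already control. Using the Cauchy--Schwarz-type bounds for the DoFs exactly as in Proposition~\ref{proposition:stability-SE} (edge DoFs controlled by $|\e|^{-1}\Norm{\cdot}_{0,\e}^2$, bulk DoFs by $\hE^{-2}\Norm{\cdot}_{0,\E}^2$), I get
\[
\SE(w_h,w_h) \lesssim \hE^{-2}\Norm{\vpE - v}_{0,\E}^2 + \hE^{-1}\Norm{\vpE - v}_{0,\partial\E}^2.
\]
Then I invoke the two auxiliary bounds already established inside the proof of Lemma~\ref{v-vpk}, namely $\Norm{v-\vpE}_{0,\E}\lesssim \hE|v-\vpE|_{1,\E}$ and $\Norm{v-\vpE}_{0,\partial\E}\lesssim \hE^{1/2}|v-\vpE|_{1,\E}$, to conclude $\SE(w_h,w_h)\lesssim |v-\vpE|_{1,\E}^2$. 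Summing over elements, applying Proposition~\ref{proposition:stability-ahE} (after subtracting the piecewise mean, which does not affect the $H^1$-seminorm and is annihilated by the construction since both $\vpE$ and $\vIE$ reproduce the boundary/bulk averages of $v$), and using Lemma~\ref{v-vpk} once more gives $\SemiNorm{\vpE-\vIE}_{1,h}\lesssim h^s\Norm{v}_{s+1,\Omega}$.

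\textbf{Main obstacle.} The delicate point is the bookkeeping around constants: $\SE$ controls the $H^1$-seminorm only on the subspace where $\Pitildenabla$ vanishes (or equivalently modulo an additive constant), so I must verify that $\vpE - \vIE$, after subtracting an appropriate constant, has its DoFs estimated correctly and that the subtracted constant is harmless in the $\SemiNorm{\cdot}_{1,h}$-seminorm. Concretely, I would apply the coercivity to $w_h - \Pi_0^0 w_h$, note $\SemiNorm{w_h - \Pi_0^0 w_h}_{1,\E} = \SemiNorm{w_h}_{1,\E}$, and observe that the DoF estimates above, combined with the scaled Poincar\'e inequality \eqref{poincare}, still close the loop because the norms of $\vpE - v$ on $\E$ and $\partial\E$ are bounded by $|v-\vpE|_{1,\E}$ regardless of additive constants. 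Everything else is a routine repackaging of Proposition~\ref{proposition:stability-SE} and Lemma~\ref{v-vpk}.
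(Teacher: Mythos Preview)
Your proposal is correct but takes a different route from the paper. The paper avoids the triangle inequality and the stabilization machinery altogether: it shows directly that $\vIE$ is the \emph{best} $H^1$-approximation of $v$ in $\VhE$. Concretely, for any $\vh\in\VhE$ one integrates by parts
\[
|v-\vIE|_{1,\E}^2
= -\int_\E \Delta(v-\vIE)(v-\vIE)
+\sum_{\e\in\EcalE}\int_\e \nbfE\!\cdot\!\nabla(v-\vIE)(v-\vIE),
\]
then replaces $\vIE$ by $\vh$ inside $\Delta(\cdot)$ and $\nbfE\!\cdot\!\nabla(\cdot)$---this is legitimate because $\Delta(\vh-\vIE)\in\Pbb_{k-2}(\E)$ and $\nbfE\!\cdot\!\nabla(\vh-\vIE)\in\Pbbtilde_{k-1}(\e)$, and the DoF-matching conditions~\eqref{vik} annihilate the difference---and integrates back to obtain $|v-\vIE|_{1,\E}^2=\aE(v-\vh,v-\vIE)$. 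Cauchy--Schwarz and the choice $\vh=\vpE$ finish the proof in one line.

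Compared with your argument, the paper's approach is shorter and conceptually cleaner: it never touches $\SE$, never worries about constants, and yields the sharp best-approximation inequality $|v-\vIE|_{1,\E}\le\inf_{\vh\in\VhE}|v-\vh|_{1,\E}$ as a byproduct. Your route is the more ``generic'' VEM interpolation argument (insert $\vpE$, bound the discrete difference via coercivity of the stabilization), and it works fine here; incidentally, your concern about subtracting constants is unnecessary, since Proposition~\ref{proposition:stability-ahE} already gives $\SE(\vh,\vh)\gtrsim|\vh|_{1,\E}^2$ for \emph{all} $\vh\in\VhE$, not only those in $\ker\Pitildenabla$.
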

\begin{proof} 
We follow the guidelines of~\cite[Proposition~$3.1$]{Mascotto-Perugia-Pichler:2018}.
The idea is to use the definition of the DoFs interpolant
so as to bound the corresponding energy error
by the energy error of any piecewise discontinuous virtual element function.

Recalling the definition of~$\vIE$,
for any $\vh$ in~$\VhE$, we get
\[
\begin{aligned}
& \int_\E \nabla (v-\vIE) \cdot \nabla (v-\vIE)
= - \int_\E \Delta (v-\vIE) (v-\vIE)
+ \sum_{\e\in\EcalE} \int_\e \nbfE \cdot \nabla (v-\vIE) (v-\vIE) \\
& = -\int_\E \Delta (v-\vh) (v-\vIE)
+\sum_{\e\in\EcalE} \int_\e \nbfE \cdot \nabla (v-\vh) (v-\vIE)
= \int_\E \nabla(v-\vh) \cdot \nabla(v-\vIE) .
\end{aligned}
\]
The Cauchy–Schwarz inequality entails
\[
|v-\vIE|_{1,\E}
\leq \inf_{\vh \in \VhE} |v-\vh|_{1,\E}
\leq |v-\vpE|_{1,\E}.
\]
The assertion follows summing over the elements,
collecting the above inequalities,
and using Lemma~\ref{v-vpk}.
\end{proof}

\section{Convergence analysis} \label{section:convergence}
In this section, we present the error analysis in the energy and $L^2$-norms
for the nonconforming virtual element method~\eqref{VEM};
see Sections~\ref{subsection:H1-error} and~\ref{subsection:L2-error}, respectively.
In particular, we first derive Strang-like error bounds
and derive optimal error estimates
based on the tools in Section~\ref{section:interpolation}.

\subsection{Convergence analysis in the energy norm} \label{subsection:H1-error}

We prove the error estimates in the energy norm
in some steps.
First, we discuss details on geometrical errors due to the presence of curved elements.

Given a curved edge~$\e$,
we denote the straight segment with endpoints given by the vertices of~$\e$ by~$\ehat$; see Figure~\ref{gammaandzeta}.

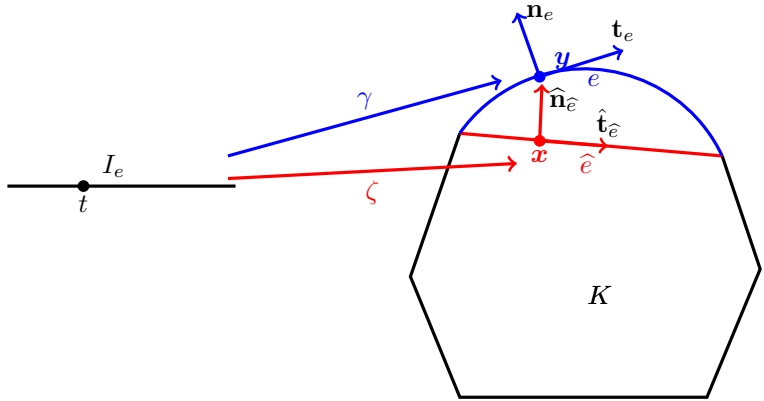
\begin{figure}[H]
\centering
{\scalefont{1}
\begin{tikzpicture}
\draw[very thick] (-7.9,0.2)--(-4.9,0.2);
\draw (-6.5,0.2) node [above] {$\Ie$};
\draw[very thick] (-1.95,0.9)--(-2.6,-1.0)--(-1.95,-2.6)--(1.3,-2.6)--(2,-0.9)--(1.5,0.6);
\draw[red,very thick] (1.5,0.6)--(-1.95,0.9);
\draw[blue,very thick] (1.5,0.6) arc (25:145:2);
\draw[red,very thick,->] (-0.9,0.8)--(-0.87,1.55);
\draw[red,very thick,->] (-0.9,0.8)--(0,0.73);
\draw[blue,very thick,->](-0.9,1.65)--(-1.2,2.5) ;
\draw[blue,very thick,->](-0.9,1.65)--(0.2,2) ;
\draw[red,very thick,->] (-5,0.3)--(-1.2,0.5);
\draw[blue,very thick,->](-5,0.6)--(-1.4,1.6) ;
\draw (-3.1,0.4) node [red,below] {$\zeta$};
\draw (-3.2,1.1) node[blue,above] {$\gamma$};
\draw (-0.9,0.8) node [red,below] {${\bm x}$};
\draw (-0.19,1.6) node[blue] {$\e$};
\draw (-0.29,0.5) node[red] {$\ehat$};
\draw (-0.6,1.62) node[blue,above] {$\bm y$};
\draw (-1.2,2.5) node[right] {$\nbfe$};
\draw (0.2,2) node[above] {${\bf t}_{\e}$};
\draw (0,0.73)node[above] {$\hat{\bf t}_{\ehat}$};
\draw (-0.9,1.35) node[right] {$\nbfhatehat$};
\draw (-6.9,0.2) node[below] {$t$};
\draw (-0.11,-1) node[below] {$ K$};
\draw (-0.11,-1) node[below] {$ K$};
\filldraw [red](-0.9,0.8) circle [radius=2pt];
\filldraw [blue](-0.9,1.65) circle [radius=2pt];
\filldraw [black](-6.9,0.2) circle [radius=2pt];
\end{tikzpicture}
}
\caption{\small{Parametrizations of the curved edge~$\e$ and straight segment~$\ehat$
by means of the parametrizations~$\gamma$ and~$\zeta$.}}
\label{gammaandzeta}
\end{figure}

Recall that~$\nbfe$ is an assigned unit normal vector to the curved edge~$\e$;
fix a unit normal vector~$\nbfhatehat$ to the segment~$\ehat$.
Given~$\gamma$ from the interval $\Ie:=[t_\e^1,t_\e^2]$ into $\Rbb^2$ 
the usual parametrization of the edge~$\e$, we have the following standard estimate; see, e.g., \cite[eq. (2.18)]{Burman-Hansbo-Larson:2018}:
\begin{equation}\label{O(h)}
\|\nbfe-\nbfhatehat\|_{L^{\infty}(\e)}\lesssim \hE \|\gamma\|_{W^{1,\infty}(I_{\e})}.
\end{equation}
With the notation as in Figure~\ref{gammaandzeta},
for any curved edge~$\e$ with endpoints~$\bm{x}_{\e}^1$ and~$\bm{x}_{\e}^2$, 
we introduce the linear map
$\zeta: \Ie \rightarrow \ehat$
given by
\[
\zeta(t) = \dfrac{t-t_{\e}^1}{t_{\e}^2 - t_{\e}^1}(\bm{x}_{\e}^2-\bm{x}_{\e}^1)+\bm{x}_{\e}^1 .
\]
We further define the constant tangent vector to~$\ehat$
\[
\hat{\mathbf{t}}_{\ehat} =\frac{ \bm{x}_{\e}^2 - \bm{x}_{\e}^1}{t_{\e}^2 - t_{\e}^1}.
\]
We have that $\hat{\mathbf{t}}_{\ehat}$ is equivalently defined as~$\zeta'(t)$.
Analogously, we can define ${\mathbf{t}}_{\e} :=\gamma'(t)$.
Since we are approximating a curved edge by a straight segment,
it is known that,
see, e.g., \cite[eq. (2.18)]{Burman-Hansbo-Larson:2018},
\[
\| \hat{\mathbf{t}}_{\ehat} - \mathbf{t}_{\e} \|_{L^\infty(\Ie)}
\lesssim \hE \|\gamma\|_{W^{1,\infty}(I_\e)}.
\]
We rewrite the two parametrizations~$\zeta$ and~$\gamma$ as (for any $t \in \Ie$)
\[
\bm{x}(t) = \zeta(t) = \int_{t_{\e}^1}^{t} \hat{\mathbf{t}}_{\ehat} ds + \bm{x}_{\e}^1,
\qquad\qquad
\bm{y}(t) = \gamma(t) = \int_{t_{\e}^1}^{t} \mathbf{t}_{\e}(s) ds+ \bm{x}_{\e}^1.
\]
Using that the length of the reference interval is approximately~$\hE$,
the difference of the two can be estimated as follows:
 \begin{equation}\label{O(h2)}
\|\bm{x}(t) - \bm{y}(t) \|
\le \int_{t_{\e}^1}^{t} \| \mathbf{t}_{\e}(s) - \hat{\mathbf{t}}_{\ehat} \| ds 
\lesssim \hE^2 .
\end{equation}
Finally, we clearly have $\psi\circ \zeta \in \Pbb_n(\Ie)$
for any $\psi$ in~$\Pbb_{n}(\ehat)$.
Thus, we write
 \begin{equation}\label{tildepsi}
 \tilde{\psi} = \psi\circ \zeta\circ\gamma^{-1}\in \Pbbtilde_{n}(\e) .
 \end{equation}
We further introduce the $H^1$ projection
$\Pinabla: H^1(\E) \to \Pbb_k(\E)$
for all elements~$\E$ as follows:
given a polynomial degree~$k$,
\begin{equation}\label{piknabla}
\begin{cases}
\int_\E \nabla \qk \cdot \nabla (v - \Pinabla v) = 0  \\
\int_{\partial \E} (v - {\Pi}_k^{\nabla,\E} v )
\end{cases}
\qquad   \forall \qk \in \Pbb_k(\E), 
\quad \forall v \in H^1(\E).
\end{equation}
With this notation at hand, we derive error estimates for the Ritz-Galerkin projector~$\Pitildenabla$
defined in~\eqref{RG-projection-bulk}--\eqref{RG-projection-zero-average}.

\begin{lemma}\label{v-pikv2}
For all $v$ in~$H^{s+1}(\Omega)$,
$1 \le s \le k$, we have
\begin{equation}\label{v-pikv}
|v-\Pitildenabla v|_{1,h}
\lesssim h^{s} \Norm{v}_{s+1,\Omega}.
\end{equation}
\end{lemma}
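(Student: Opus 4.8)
The plan is to estimate $|v-\Pitildenabla v|_{1,h}$ by comparing $\Pitildenabla v$ with the standard $H^1$ projection $\Pinabla v$ defined in~\eqref{piknabla}, for which optimal approximation estimates are classical. Write the splitting
\[
|v-\Pitildenabla v|_{1,\E}
\le |v-\Pinabla v|_{1,\E}
+ |\Pinabla v - \Pitildenabla v|_{1,\E}.
\]
The first term is bounded by $\hE^s\Norm{v}_{s+1,\E}$ by standard polynomial approximation theory (Brenner--Scott). The core of the argument is to show that the second, purely geometric, discrepancy term is of the same (or higher) order; heuristically it should be $O(\hE^{s})$ or better because the two projections differ only through the use of $\Pitildeze(\nbfE\cdot\nabla q_k)$ in place of $\nbfE\cdot\nabla q_k$ on the curved edges, an error that is itself small by Lemma~\ref{lemma:proes}.

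To quantify $|\Pinabla v - \Pitildenabla v|_{1,\E}$, set $p := \Pinabla v - \Pitildenabla v \in \Pbb_k(\E)$ and test the defining relation~\eqref{RG-projection-bulk} of $\Pitildenabla$ with $q_k = p$, and the defining relation of $\Pinabla$ with the same $p$. Subtracting, the bulk term $\int_\E \Delta p\, v$ and the straight-edge boundary terms cancel, leaving only a sum over the curved edges of the form
\[
|p|_{1,\E}^2
= \sum_{\e\in\EcalEc}\int_\e (I-\Pitildeze)(\nbfE\cdot\nabla p)\, v\ \mathrm{d}s
= \sum_{\e\in\EcalEc}\int_\e (I-\Pitildeze)(\nbfE\cdot\nabla p)\,(I-\Pitildeze) v\ \mathrm{d}s,
\]
where the last equality uses that $\Pitildeze$ is an $L^2(\e)$ projection. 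Now apply Cauchy--Schwarz on each curved edge, then Lemma~\ref{lemma:proes} with $n=k-1$ (legitimate by Assumption~\ref{assumption:eta}) to both factors: the factor involving $v$ gives $\he^{s+\frac12}\Norm{v}_{s+\frac12,\e}$ — or, more simply, bound $\Norm{(I-\Pitildeze)v}_{0,\e}\lesssim \he^{\frac12}\NNorm{v}_{\frac12,\partial\E}\lesssim\he^{\frac12}$-type trace quantities if one only wants an $O(\hE)$ gain — while the factor involving $\nbfE\cdot\nabla p$ is controlled using a polynomial inverse estimate on the curved edge (Lemma~\ref{lemma:inv012} combined with the trace inequality) by $\he^{-\frac12}\SemiNorm{p}_{1,\E}$. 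Combining and absorbing one power of $|p|_{1,\E}$ yields $|p|_{1,\E}\lesssim \hE\Norm{v}_{1,\E}$ at worst, and with the sharper Lemma~\ref{lemma:proes} bound on the $v$ factor one gets $|p|_{1,\E}\lesssim\hE^{s}\Norm{v}_{s+1,\E}$; in either case this is dominated by the first term of the splitting.

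Finally, summing over all elements, the curved-edge terms are confined to the boundary strip, and one passes from edgewise norms $\Norm{v}_{s+\frac12,\e}$ (or $\Norm{\nabla v}_{s-\frac12,\e}$) to the global norm $\Norm{v}_{s+1,\Omega}$ exactly as in the proof of Lemma~\ref{v-vpk}, via the trace theorem on a smooth enlargement $\mathcal C_i$ of $\Gamma_i$ and the stability of Stein's extension operator (Lemma~\ref{lemma:Stein}). The main obstacle is the treatment of the geometric discrepancy term: one must carefully balance the negative power $\he^{-1/2}$ produced by the curved-edge polynomial inverse inequality against the positive powers of $\he$ coming from the approximation properties of $\Pitildeze$ in Lemma~\ref{lemma:proes}, and verify that Assumption~\ref{assumption:eta} provides exactly the edge regularity needed to make Lemma~\ref{lemma:proes} applicable at the required order. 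Everything else is routine once this balance is secured.
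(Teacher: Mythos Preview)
Your overall architecture is correct and matches the paper: split via $\Pinabla$, reduce $|\Pinabla v-\Pitildenabla v|_{1,\E}^2$ to a sum over curved edges of $\int_\e (I-\Pitildeze)(\nbfE\cdot\nabla p)\,(I-\Pitildeze)v$, Cauchy--Schwarz, then pass to global norms via Stein's extension. The gap is in how you bound the first factor.

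You propose to control $\|(I-\Pitildeze)(\nbfE\cdot\nabla p)\|_{0,\e}$ by the crude chain $\le\|\nabla p\|_{0,\e}\lesssim \he^{-1/2}|p|_{1,\E}$. Combined with Lemma~\ref{lemma:proes} on the $v$ factor, this forces you to extract $\he^{s+1/2}$ from $\|(I-\Pitildeze)v\|_{0,\e}$, i.e.\ to invoke Lemma~\ref{lemma:proes} with exponent $s+\tfrac12$. But with $n=k-1$ that lemma only reaches order $k$, so for $s=k$ (the case that matters for optimal convergence) you are stuck at $\he^{k-1/2}$. Your ``balance'' therefore fails precisely at the top order, and the argument as written yields only $|v-\Pitildenabla v|_{1,h}\lesssim h^{k-1/2}\Norm{v}_{k+1,\Omega}$. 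The paper flags exactly this: the direct approach you describe is the suboptimal route mentioned in the remark following the lemma.

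The missing idea is geometric. On a curved edge $\e$, $\nbfE\cdot\nabla p$ is \emph{not} a mapped polynomial, but it is $O(\hE)$-close to one: if $\ehat$ is the chord of $\e$ with constant normal $\nbfhatehat$, and $\tilde{\bm\psi}:=\nabla p\circ\zeta\circ\gamma^{-1}$ is the pull-back of $\nabla p$ along the chord, then $\tilde{\bm\psi}\cdot\nbfhatehat\in\Pbbtilde_{k-1}(\e)$ is annihilated by $I-\Pitildeze$. The paper inserts this comparison function and bounds $\|\bm\psi\cdot\nbfE-\tilde{\bm\psi}\cdot\nbfhatehat\|_{L^\infty(\e)}$ using $\|\nbfe-\nbfhatehat\|_{L^\infty}\lesssim\hE$ and $\|\bm x(t)-\bm y(t)\|\lesssim\hE^2$ (equations~\eqref{O(h)}--\eqref{O(h2)}), obtaining $\|(I-\Pitildeze)(\nbfE\cdot\nabla p)\|_{0,\e}\lesssim \hE^{1/2}|p|_{1,\E}$ rather than $\hE^{-1/2}|p|_{1,\E}$. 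That extra factor of $\hE$ is exactly what lets you pair with $\|(I-\Pitildeze)v\|_{0,\e}\lesssim\hE^{s-1/2}\Norm{v}_{s-1/2,\e}$, which \emph{is} within the range of Lemma~\ref{lemma:proes} for all $s\le k$, and closes the estimate optimally.
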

\begin{proof}
According to the triangle inequality and the approximation properties of~$ \Pinabla$, we have
\[
\begin{aligned}
|v-\Pitildenabla v|_{1,\E}
&\leq |v - \Pinabla v|_{1,\E}+|\Pinabla v-\Pitildenabla v|_{1,\E}
\lesssim \hE^\s \SemiNorm{v}_{\s+1,\E}
+ |\Pinabla v - \Pitildenabla v|_{1,\E}.
\end{aligned}
\]
By definition of~$\Pitildenabla$ and~$\Pinabla$,
for all~$\qk$ in~$\Pbb_{k}(\E)$,
we can write
\[
\begin{aligned}
\int_\E \nabla \qk \cdot & \nabla(\Pinabla v -\Pitildenabla v)~ \mathrm{d}\E
= \int_\E \nabla \qk \cdot \nabla v~ \mathrm{d}\E
+\int_\E\Delta \qk v \ \mathrm{d}K
-\sum_{\e\in\EcalE}\int_\e\Pitildeze
(\nbfE \cdot \nabla \qk) v \ \mathrm{d}s\\
& = \sum_{\e\in\EcalE}\int_\e (I- \Pitildeze) (\nbfE \cdot \nabla \qk) v~\mathrm{d}s
= \sum_{\e\in\EcalEc}\int_\e 
(I- \Pitildeze) (\nbfE \cdot \nabla \qk) v ~\mathrm{d}s.
\end{aligned}
\]
If~$k=1$, we have by definition of~$\VhE$ that
$\Pinabla v = \Pitildenabla v$
and the right-hand side vanishes.
So, we focus on the case~$k\geq 2$. We set
\[\bm{\psi} 
:= \nabla(\Pinabla v - \Pitildenabla v)
\in [\Pbb_{k-1}(\E)]^2.
\]
We rewrite the above identities for the choice
$q_k = \Pinabla v-\Pitildenabla v$ and obtain
\begin{equation} \label{difference-Pinabla-Pitildenabla}
\begin{aligned}
& |\Pinabla v-\Pitildenabla v|_{1,\E}^2 
= \sum_{\e\in\EcalE_c}\int_\e (I- \Pitildeze)(\bm{\psi}\cdot\nbfE) v~\mathrm{d}s\\
& =\sum_{\e\in\EcalE_c}\int_\e(I-{\Pitildeze})(\bm{\psi}\cdot\nbfE) (I- \Pitildeze)v~\mathrm{d}s \\
& \leq\sum_{\e\in\EcalE_c}\|(I-{\Pitildeze})(\bm{\psi}\cdot\nbfE) \|_{0,\e}\|(I- \Pitildeze) v\|_{0,\e}.
\end{aligned}
\end{equation}
Let $\tilde{\bm{\psi}} : e \rightarrow \Rbb^2$ be the mapped polynomial associated to $\bm{\psi}_{|_\e}$,
i.e., $\tilde{\bm{\psi}} := \bm{\psi}\circ \zeta\circ\gamma^{-1}\in \Pbbtilde_{k-1}(\e)$.
Since, by the same observation in \eqref{tildepsi}, $\tilde{\bm{\psi}}\cdot \hat{\nbf} \in \Pbbtilde_{k-1}(\e)$, we deduce
\begin{equation}\label{psin}
\begin{aligned}
& \|(I-{\Pitildeze})(\bm{\psi}\cdot\nbfE)\|_{0,\e}
\leq \|\bm{\psi}\cdot \nbfE-\tilde{\bm{\psi}}\cdot \hat{\nbf}\|_{0,\e}+\| {\Pitildeze}(\bm{\psi}\cdot\nbfE) -\tilde{\bm{\psi}}\cdot \hat{\nbf} \|_{0,\e}\\
&\leq \|\bm{\psi}\cdot \nbfE-\tilde{\bm{\psi}}\cdot \hat{\nbf}\|_{0,\e}+\| {\Pitildeze}(\bm{\psi}\cdot \nbfE 
-\tilde{\bm{\psi}}\cdot \hat{\nbf}) \|_{0,\e}
\leq 2\|\bm{\psi}\cdot \nbfE -\tilde{\bm{\psi}}\cdot \hat{\nbf}\|_{0,\e}\\
&\lesssim\he^{\frac12}
 \max_{\bm{y}\in e}|\bm{\psi}(\bm{y})\cdot\nbf(\bm{y}) 
-\tilde{\bm{\psi}}(\bm{y})\cdot \hat{\nbf}|\\
&\lesssim \he^{\frac12} (\max_{\bm{y}\in e} |\tilde{\bm{\psi}}(\bm{y}) \cdot (\nbf(\bm{y}) -\hat{\nbf} )|
+ \max_{\bm{y}\in e}|(\bm{\psi}(\bm{y})
-\tilde{\bm{\psi}} (\bm{y}))\cdot\nbf(\bm{y}) |).
\end{aligned}
\end{equation}
By~\eqref{O(h)}, we have the following estimate for the first term on the right-hand side of~\eqref{psin}:
\begin{equation}\label{O(h)2}
\begin{aligned}
\max_{\bm{y}\in e}|\tilde{\bm{\psi}}(\bm{y})\cdot(\nbf(\bm{y}) -\hat{\nbf} )|
\lesssim \hE\max_{t\in \Ie}|\tilde{\bm{\psi}}(\bm{y}(t))|
\lesssim \hE\max_{t\in \Ie}|\bm{\psi}(\bm{x}(t))|
\lesssim \hE\|{\bm{\psi}}\|_{\infty,\E}.
\end{aligned}
\end{equation}
As for the second term on the right-hand side of~\eqref{psin}, we use~\eqref{O(h2)}:
\begin{equation}\label{O(h2)2}
\begin{aligned}
 \max_{\bm{y}\in e}|(\bm{\psi}(\bm{y})-\tilde{\bm{\psi}}(\bm{y}))\cdot\nbf(\bm{y})|
&= \max_{t\in \Ie}|\bm{\psi}(\bm{y}(t))-\tilde{\bm{\psi}}(\bm{y}(t))|\\
&=\max_{t\in \Ie}|\bm{\psi}(\bm{y}(t))-\bm{\psi}(\bm{x}(t))|
\lesssim h_\E^{\frac32}\|\bm{\psi}\|_{W^{1,\infty}(K)}.
\end{aligned}
\end{equation}
Collecting~\eqref{O(h)2} and~\eqref{O(h2)2} in~\eqref{psin}
and using a polynomial inverse estimate,
we obtain
\begin{equation}\label{psin2}
\|(I- {\Pitildeze})(\bm{\psi}\cdot\nbfE)\|_{0,\e} 
\lesssim \hE^{\frac{3}{2}} \|{\bm{\psi}}\|_{\infty,\E}
        +h_\E^{\frac52}\|\bm{\psi}\|_{W^{1,\infty}(K)} 
\lesssim \hE^{\frac12} \|{\bm{\psi}}\|_{0,\E}.
\end{equation}
On the other hand, the trace inequality for polynomial
and Lemma~\ref{lemma:proes} with~$n=k-1$ imply
\begin{equation} \label{second-bound-Pitildenabla-estimates}
\| (I- {\Pitildeze})v\|_{0,\e}
\lesssim \hE^{\s-\frac12}\Norm{v}_{\s-\frac12,\e}.
\end{equation}
Collecting~\eqref{psin2} and~\eqref{second-bound-Pitildenabla-estimates}
in~\eqref{difference-Pinabla-Pitildenabla},
we deduce
\[
|v-\Pitildenabla v|_{1,h}
\lesssim h^\s \left((\Norm{v}_{s+1,\Omega}
                +\sum_{i=1}^N\Norm{v}_{\s-\frac12,\Gamma_i}\right).
\]
Recalling the Stein's extension operator~$E$ in Lemma~\ref{lemma:Stein}
with stability properties as in~\eqref{Extension}
and proceeding along the same lines of Lemma~\ref{v-vpk},
we obtain
\[
\Norm{v}_{\s-\frac12,\Gamma_i}
\leq\|Ev\|_{\s-\frac12,\partial\mathcal{C}_i}
\lesssim \|Ev\|_{\s,\mathcal{C}_i}
 \leq \|Ev\|_{\s,\bbR^d}
 \lesssim \Norm{v}_{\s,\Omega}.
\]
The assertion follows combining the two estimates above.
\end{proof}

\begin{remark}
Lemma \ref{v-pikv2} plays a critical role in the optimality with respect to~$k$ of the convergence estimates
detailed in Theorems~\ref{theorem:H1} and~\ref{theorem:L2} below.
A more direct approach
would not exploit that~$e$ converges to~$\widehat{e}$ as $h \rightarrow 0$;
the ensuing approximation estimate would either be sub-optimal
(order $h^{k-1/2}$ in the $H^1$ norm)
or require a higher order polynomial degree on curved edges
in the definition of the local spaces
thus leading to a more computationally expensive scheme.
\end{remark}

Next, we provide results that are instrumental to derive optimal convergence of the method;
see Theorem~\ref{theorem:H1}.
We begin by recalling the discretization error on the right-hand side;
see, e.g., \cite[Section~$4.7$]{BeiradaVeiga-Brezzi-Cangiani-Manzini-Marini-Russo:2013}.

\begin{lemma}\label{esf}
Let the right-hand side~$f$ of~\eqref{model}
belong to~$H^{s-1}(\Omega)$, $\ell \le s \le k+1$,
where~$\ell=2$ if~$k=1$, $\ell=1$ if~$k \ge 2$,
and the discrete right-hand side~$\fh$ be as in~\eqref{fk1}--\eqref{fk2}.
Then, for any~$\vh$ in~$\Vh$, we have
\[
| (f- \fh, \vh )_{0,\E}|\lesssim 
\begin{cases}
h \SemiNorm{f}_{1,\Omega} \SemiNorm{\vh}_{1,h}
&  \text{for } k=1,\\
h^{s} \SemiNorm{f}_{s-1,\Omega}\SemiNorm{\vh}_{1,h}
& \text{for } k>1.
\end{cases}
\]
\end{lemma}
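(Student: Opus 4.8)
The plan is to estimate the load error elementwise and then sum, treating separately the two definitions~\eqref{fk1} and~\eqref{fk2} of~$\fh$; throughout, the trace and Poincar\'e inequalities invoked have constants uniform over~$\taun$ by virtue of~\textbf{(G1)}--\textbf{(G2)} and Remark~\ref{lengthofedges}. Consider first the case $k\ge 2$, where $\fh$ is a purely volumetric perturbation, so no curved–edge quantity enters. Since $\PizEkmt$ is the $L^2(\E)$–orthogonal projection onto $\Pbb_{k-2}(\E)$ and this space contains the constants, $\PizEkmt$ is self-adjoint on $L^2(\E)$ and reproduces $\Pbb_{k-2}(\E)$; hence on each~$\E$
\[
(f-\fh,\vh)_{0,\E}
= \int_\E f\,(\vh-\PizEkmt\vh)\,\mathrm{d}K
= \big((I-\PizEkmt)f,\,(I-\PizEkmt)\vh\big)_{0,\E}.
\]
I would apply the Cauchy--Schwarz inequality, bound the first factor by standard polynomial approximation, $\Norm{(I-\PizEkmt)f}_{0,\E}\lesssim\hE^{s-1}\SemiNorm{f}_{s-1,\E}$ for $1\le s-1\le k-1$ (for $s=1$ one simply keeps $\Norm{f}_{0,\E}$), and the second by the scaled Poincar\'e inequality~\eqref{poincare}, $\Norm{(I-\PizEkmt)\vh}_{0,\E}\le\Norm{\vh-\Pi_0^0\vh}_{0,\E}\lesssim\hE\SemiNorm{\vh}_{1,\E}$, using again that $\PizEkmt$ preserves constants. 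Summing over $\E\in\taun$ and applying the discrete Cauchy--Schwarz inequality yields $h^{s}\SemiNorm{f}_{s-1,\Omega}\SemiNorm{\vh}_{1,h}$.

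For $k=1$, denote by $\overline{\vh}^{\E}:=N_\e^{-1}\sum_{\e\in\EcalE}\bm{D}_\e^0(\vh)$ the constant appearing in~\eqref{fk2}, and observe that $\Pi_0^{0,\E}f=|\E|^{-1}\int_\E f\,\mathrm{d}K$, so that the element contribution of~\eqref{fk2} equals $|\E|(\Pi_0^{0,\E}f)\,\overline{\vh}^{\E}=\int_\E f\,\overline{\vh}^{\E}\,\mathrm{d}K$; therefore $(f-\fh,\vh)_{0,\E}=\int_\E f\,(\vh-\overline{\vh}^{\E})\,\mathrm{d}K$. Since $\overline{\vh}^{\E}$ reproduces constants, the crux is the bound $\Norm{\vh-\overline{\vh}^{\E}}_{0,\E}\lesssim\hE\SemiNorm{\vh}_{1,\E}$, which I would obtain by the triangle inequality through $\Pi_0^0\vh$: the term $\Norm{\vh-\Pi_0^0\vh}_{0,\E}$ is handled by~\eqref{poincare}, while $|\Pi_0^0\vh-\overline{\vh}^{\E}|$ -- a functional vanishing on constants -- is controlled by the uniform trace inequality together with~\eqref{poincare}, giving $|\Pi_0^0\vh-\overline{\vh}^{\E}|\lesssim\SemiNorm{\vh}_{1,\E}$ and hence $\Norm{\Pi_0^0\vh-\overline{\vh}^{\E}}_{0,\E}=|\E|^{1/2}|\Pi_0^0\vh-\overline{\vh}^{\E}|\lesssim\hE\SemiNorm{\vh}_{1,\E}$. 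The Cauchy--Schwarz inequality then yields $|(f-\fh,\vh)_{0,\E}|\lesssim\hE\Norm{f}_{0,\E}\SemiNorm{\vh}_{1,\E}$, and summation over the elements gives $h\Norm{f}_{1,\Omega}\SemiNorm{\vh}_{1,h}$ (in fact already with $\Norm{f}_{0,\Omega}$).

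The Cauchy--Schwarz and polynomial–approximation steps are routine; the only genuinely delicate point I expect is the $k=1$ estimate $\Norm{\vh-\overline{\vh}^{\E}}_{0,\E}\lesssim\hE\SemiNorm{\vh}_{1,\E}$, i.e.\ controlling the mismatch between the bulk mean of~$\vh$ and the average of its edge means uniformly on (possibly curved) elements. This rests on the uniform trace inequality on star-shaped elements granted by~\textbf{(G1)}--\textbf{(G2)} and on the equivalences $\he\approx\hE\approx|\E|^{1/2}$ from Remark~\ref{lengthofedges}, and it is the reason why the reference~\cite[Section~$4.7$]{BeiradaVeiga-Brezzi-Cangiani-Manzini-Marini-Russo:2013} carries over to the curved setting without essential changes.
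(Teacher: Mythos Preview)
Your proof is correct and is precisely the standard argument the paper defers to via the citation~\cite[Section~$4.7$]{BeiradaVeiga-Brezzi-Cangiani-Manzini-Marini-Russo:2013}; the paper gives no proof of its own beyond that reference, and your treatment of the curved case (uniform trace and Poincar\'e constants under~\textbf{(G1)}--\textbf{(G2)}) is exactly what is needed to carry it over. Your side observation that the $k=1$ bound already holds with~$\Norm{f}_{0,\Omega}$ in place of~$\SemiNorm{f}_{1,\Omega}$ is also correct.
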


Next, we introduce a bilinear form that will allow us to take into account the nonconformity of the method in the error analysis,
namely $\Ncalh: H^{\frac32+\varepsilon}(\Omega) \times H^{1,nc}(\taun,k) \to \Rbb$
defined by
\[
\Ncalh(u,\vh)
:= \sum_{\E \in \taun}
    \int_{\partial \E} (\nbfE \cdot \nabla u) \vh~\mathrm{d}s
=  \sum_{\e \in \Ecalh} \int_\e \nabla u \cdot \jump{\vh}~\mathrm{d}s.
\]
If~$u$, the solution to~\eqref{model}, belongs to~$H^{\frac32+\varepsilon}(\Omega)$
and is such that~$\Delta u$ belongs to~$L^2(\E)$ for all elements~$\E$,
an integration by parts implies that
\begin{equation}\label{eq:007}
a(u,v) 
= (f,\vh)+\Ncalh(u,\vh)
\qquad \forall \vh \in\Vh \subset H^{1,nc}(\taun,k) .
\end{equation}
In the following result,
we cope with the estimate of the term related to the nonconformity of the scheme.
\begin{lemma} \label{lemma:nc-term}
Let $u$, the solution to~\eqref{model} belong to~$H^{s+1}(\Omega)$,
$1/2 < s \le k$.
Then, for all~$\vh$ in~$\Vh$, we have
\[
|\Ncalh(u,\vh)|
\lesssim h^{s} \Norm{u}_{s+1,\Omega} \SemiNorm{v_h}_{1,h}.
\]
\end{lemma}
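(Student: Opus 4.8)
The plan is to exploit the nonconformity orthogonality built into $H^{1,nc}(\taun,k)$: for each edge $\e$, the jump $\jump{\vh}$ integrates to zero against mapped polynomials of degree $k-1$ in $\Pbbtilde_{k-1}(\e)$. The basic trick is to subtract, on each edge, a suitable (mapped) polynomial approximation of $\nabla u \cdot \nbfe$, so that the tested quantity becomes orthogonal to the jump. First I would rewrite $\Ncalh(u,\vh) = \sum_{\e\in\Ecalh} \int_\e \nabla u \cdot \jump{\vh}\,\mathrm{d}s$ and, on each edge, insert $-\boldsymbol{q}_\e$ where $\boldsymbol{q}_\e$ is a vector-valued polynomial on $\ehat$ (the straight segment with the endpoints of $\e$), mapped to $\e$ via $\zeta\circ\gamma^{-1}$ as in~\eqref{tildepsi}; the key observation is that $\widetilde{\boldsymbol{q}}_\e\cdot\nbfhatehat$ belongs to $\Pbbtilde_{k-1}(\e)$, so it is orthogonal to $\int_\e (\cdot)\jump{\vh}$ only if we test against $\nbfe$—hence the geometric correction term must be tracked. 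More precisely, I would write
\[
\int_\e \nabla u\cdot\jump{\vh}\,\mathrm{d}s
= \int_\e (\nabla u - \widetilde{\boldsymbol{q}}_\e)\cdot\nbfe\, w_\e\,\mathrm{d}s
  + \int_\e \widetilde{\boldsymbol{q}}_\e\cdot(\nbfe - \nbfhatehat)\, w_\e\,\mathrm{d}s ,
\]
where $w_\e$ is the scalar jump component ($\jump{\vh} = w_\e\,\nbfe$, recalling the definition of the jump), since $\int_\e (\widetilde{\boldsymbol{q}}_\e\cdot\nbfhatehat)\, w_\e\,\mathrm{d}s = 0$ by membership in $H^{1,nc}(\taun,k)$.

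For the first term, I would choose $\boldsymbol{q}_\e$ to be (componentwise) a good polynomial approximation of $\nabla u$ on a neighborhood of $\ehat$—using Stein's extension (Lemma~\ref{lemma:Stein}) to extend $u$ past the boundary when $\e$ is a boundary edge, exactly as in Lemmas~\ref{v-vpk} and~\ref{v-pikv2}—and apply a trace inequality plus polynomial approximation to get $\|\nabla u - \boldsymbol{q}_\e\|_{0,\e} \lesssim h_\E^{s-1/2}\|u\|_{s+1,\omega_\E}$ on a suitable patch $\omega_\E$; combined with a trace estimate $\|w_\e\|_{0,\e}\lesssim \SemiNorm{\vh}_{0,\partial\E}$ bounded via $\hE^{-1/2}\Norm{\vh}_{0,\E} + \hE^{1/2}\SemiNorm{\vh}_{1,\E}$ (and, for interior edges, the analogue on both sides), this yields the $h^s$ rate after summing and absorbing the $\hE^{-1/2}\Norm{\vh}_{0,\E}$ contribution via a broken Poincaré/Brenner-type inequality since $\vh\in\Vh$ has matching moments (so $\|\vh\|_{0,h}\lesssim h\,\SemiNorm{\vh}_{1,h}$ up to a global constant, or one works directly with $\SemiNorm{\vh}_{1,h}$ using the mean-zero structure of $\jump{\vh}$ on each edge). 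For the second, geometric term, I would use~\eqref{O(h)}: $\|\nbfe - \nbfhatehat\|_{L^\infty(\e)}\lesssim \hE$, together with a polynomial inverse estimate bounding $\|\widetilde{\boldsymbol{q}}_\e\|_{L^\infty(\e)}\lesssim \he^{-1/2}\|\widetilde{\boldsymbol{q}}_\e\|_{0,\e}\lesssim \he^{-1/2}(\|\nabla u\|_{0,\e} + h_\E^{s-1/2}\|u\|_{s+1,\omega_\E})$, so that this term contributes $\lesssim \hE\cdot \he^{-1/2}\|\nabla u\|_{0,\e}\cdot\|w_\e\|_{0,\e}$; using the trace estimate $\|\nabla u\|_{0,\e}\lesssim \hE^{-1/2}\|\nabla u\|_{0,\E} + \hE^{1/2}|\nabla u|_{1,\E}$ and $\|w_\e\|_{0,\e}\lesssim \hE^{-1/2}\|\vh\|_{0,\E}+\hE^{1/2}\SemiNorm{\vh}_{1,\E}$, this is $\lesssim \hE\,|u|_{1,\E}\SemiNorm{\vh}_{1,\E}$ at worst—which is only $O(h)$, not $O(h^s)$. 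To recover the sharp $h^s$ rate one subtracts a constant from $\nabla u$ inside this term as well (replacing $\nabla u$ by $\nabla u - \overline{\nabla u}^\E$ using that $\int_\e(\nbfe-\nbfhatehat)\,\overline{c}\,w_\e$ is also $O(h^2)$-small, cf.~\eqref{O(h2)}) so that $\|\nabla u - \boldsymbol{q}_\e\|$ rather than $\|\nabla u\|$ appears, regaining the extra powers of $h$; this is precisely the device that makes Lemma~\ref{v-pikv2} sharp and must be mirrored here.

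The main obstacle is the bookkeeping of the geometric error term: a naive estimate only gives $O(h)$, so one must argue—exactly as in the remark following Lemma~\ref{v-pikv2}—that the straight segment $\ehat$ converges to the curved edge $\e$ at rate $O(h^2)$ (estimates~\eqref{O(h)} and~\eqref{O(h2)}), and combine this with subtracting an appropriate polynomial from $\nabla u$ so that the small geometric factor multiplies a small approximation residual rather than $\nabla u$ itself. The remaining subtlety is handling interior versus boundary edges uniformly—on interior edges there is no curvature, $\e=\ehat$ and $\Pbbtilde_{k-1}(\e)=\Pbb_{k-1}(\e)$, so the argument is the classical nonconforming-FEM estimate (subtract the edge average of $\nabla u$ from both contributions of $\jump{\vh}$), while on boundary edges the Stein extension supplies the regularity needed to apply Lemma~\ref{lemma:proes} with $n=k-1$ (legitimate by Assumption~\ref{assumption:eta}, since $\ord\ge k$). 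Summing over all edges, using (\textbf{G1})--(\textbf{G2}) to control the overlap of the patches $\omega_\E$ by a fixed constant, and taking $s>1/2$ to make the fractional trace $\|u\|_{s-1/2,\e}$ meaningful, yields the claimed bound.
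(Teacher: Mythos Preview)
Your plan works far too hard and ultimately does not close. The whole detour through the straight segment $\ehat$, the mapped vector $\widetilde{\boldsymbol q}_\e$, and the geometric correction $\nbfe-\nbfhatehat$ is unnecessary here, and the fix you propose for the resulting $O(h)$ term does not deliver $O(h^s)$ for $s>1$: subtracting a constant $\overline c$ from $\nabla u$ still leaves you with $|\overline c|\cdot\big|\int_\e(\nbfe-\nbfhatehat)w_\e\big|$, and~\eqref{O(h2)} controls $\|\bm x(t)-\bm y(t)\|$, \emph{not} the integral of $(\nbfe-\nbfhatehat)$ against the jump. With only~\eqref{O(h)} at your disposal this contribution is $\lesssim h\,|u|_{1,\E}\,|\vh|_{1,\E}$, which is exactly the $O(h)$ obstruction you identified and never remove. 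The analogy with Lemma~\ref{v-pikv2} is misleading: there the gain comes because the quantity being projected is a \emph{fixed polynomial} $\bm\psi$, so inverse estimates convert the extra $h$ from geometry into a true gain; here $\nabla u$ has no inverse inequality to exploit.

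The paper's argument avoids all of this. The point you are missing is that the nonconforming orthogonality is stated \emph{directly} in terms of $\Pbbtilde_{k-1}(\e)$, so you should subtract the mapped-polynomial projector $\Pitildeze$ itself, not a standard polynomial pushed through $\zeta\circ\gamma^{-1}$. Writing $\jump{\vh}=w_\e\,\nbfe$, one has
\[
\int_\e \nabla u\cdot\jump{\vh}\,\mathrm d s
=\int_\e (I-\Pitildeze)(\nabla u)\cdot\jump{\vh}\,\mathrm d s
=\int_\e (I-\Pitildeze)(\nabla u)\cdot\big(\jump{\vh}-\jump{\Pi_0^0\vh}\big)\,\mathrm d s,
\]
since $\Pbb_0(\e)\subset\Pbbtilde_{k-1}(\e)$ makes the range of $I-\Pitildeze$ orthogonal to constants. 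Lemma~\ref{lemma:proes} with $n=k-1$ gives $\|(I-\Pitildeze)\nabla u\|_{0,\e}\lesssim h^{s-1/2}\|\nabla u\|_{s-1/2,\e}$ immediately (on curved edges one may equivalently project the scalar $\nbfe\cdot\nabla u$; the smoothness of $\nbfe$ guaranteed by Assumption~\ref{assumption:eta} keeps the norm on the right under control). For the jump factor, trace plus Poincar\'e on each neighbouring element give $\|\jump{\vh}-\jump{\Pi_0^0\vh}\|_{0,\e}\lesssim h^{1/2}\SemiNorm{\vh}_{1,\E^+\cup\E^-}$, with no need for a global broken Poincar\'e inequality or Stein extension. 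Summing over edges yields the lemma in two lines. In short: use $\Pitildeze$---that is what it is there for---and the geometric residual never appears.
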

\begin{proof}
The proof follows along the same line as those in~\cite[Lemma 4.1]{AyusodeDios-Lipnikov-Manzini:2016}. 
We briefly report it here for the sake of completeness. 

From the definitions of the space $H^{1,nc}(\taun,k)$ and the operator $\Pitilde_{k-1}^{0,\e}$, 
recalling that $\Pbb_0(e) \subset \widetilde{\Pbb}_{k-1}(e)$,
finally using the Cauchy–Schwarz inequality, we find
\[
\begin{aligned}
\Ncalh(u,\vh)
&=  \sum_{\e \in \Ecalh} \int_\e
(\nabla u - \Pitildeze \nabla u) \cdot \jump{\vh}~\mathrm{d}s
=  \sum_{\e \in \Ecalh} \int_\e 
(\nabla u -\Pitildeze \nabla u) \cdot (\jump{\vh}- [\![ \Pi_0^{0} \vh ]\!])~\mathrm{d}s\\
&=  \sum_{\e \in \Ecalh} 
\|\nabla u - \Pitildeze \nabla u\|_{0,\e} \|\jump{\vh} - [\![ \Pi_0^{0} \vh ]\!] \|_{0,\e} ,
\end{aligned}
\]
where $\Pi^0_0$ denotes as usual the $L^2$ projection on $\taun$-piecewise constant functions.
Using Lemma~\ref{lemma:proes} and the Poincar\'e inequality,
for each internal edge $e=\partial \E^+\cup\partial\E^-$, we get
\[
\|\nabla u -\Pitildeze\nabla u\|_{0,\e}
\lesssim h^{\s-\frac12}\| u\|_{\s+\frac12,\e}
    \lesssim h^{\s-\frac12}\| u\|_{\s+1,\E^+\cup\E^-}
\]
and
\[
\begin{aligned}
\|\jump{\vh}-  [\![ \Pi_0^{0} \vh ]\!] \|_{0,\e}
& \lesssim h^{-\frac12}\| \vh-  \Pi_0^{0} \vh \|_{0,\E^+\cup\E^-}
+ h^{\frac12}| \vh-\Pi_0^{0} \vh |_{1,\E^+\cup\E^-}
\lesssim h^{\frac12}|\vh|_{1,\E^+\cup\E^-}.
\end{aligned}
\]
An analogous estimate is valid for boundary edges.
The assertion follows summing over all elements.
\end{proof}

Next, we estimate from above a term measuring
the lack of polynomial consistency of the proposed method, i.e.,
the error between the bilinear functions~$a(\cdot,\cdot)$ and $\ah(\cdot,\cdot)$. 
\begin{lemma} \label{esa}
Let $u$ the solution to~\eqref{model} belong to~$H^{s+1}(\Omega)$,
$1\le s\le k$.
Then, we have
\[
|\ah(u,\vh)-a(u,\vh)|
\lesssim h^s \Norm{u}_{s+1,\Omega} \SemiNorm{\vh}_{1,h}
\qquad \forall \vh \in \Vh .
\]
\end{lemma}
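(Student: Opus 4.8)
The plan is to decompose $\ah(u,\vh) - a(u,\vh)$ elementwise and, on each element~$\E$, insert the polynomial approximant~$\vpi$ (the $L^2$ projection of~$u$ onto~$\Pbb_k(\E)$) and split the error into a part where the discrete bilinear form would be consistent if the element were straight, plus a genuinely new ``curvature'' defect coming from the fact that $\Pitildenabla$ does not reproduce polynomials on curved elements (Remark~\ref{remark:RG-projection-inconsistency}). Concretely, I would write
\[
\ahE(u,\vh) - \aE(u,\vh)
= \ahE(u - \vpi, \vh) + \bigl(\ahE(\vpi,\vh) - \aE(\vpi,\vh)\bigr) + \aE(\vpi - u, \vh).
\]
The first and third terms are controlled immediately: by the stability estimate of Proposition~\ref{prop:Xnew} (together with the Cauchy--Schwarz-type continuity of $\ahE$) and the polynomial approximation estimate for~$\vpi$ recalled in Section~\ref{section:interpolation}, each is bounded by $h_\E^s \Norm{u}_{s+1,\E}\SemiNorm{\vh}_{1,\E}$. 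So the crux is the middle term, the polynomial inconsistency defect $\ahE(\vpi,\vh) - \aE(\vpi,\vh)$.

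For this middle term I would unfold the definition of $\ahE$: writing $p = \vpi \in \Pbb_k(\E)$,
\[
\ahE(p,\vh) - \aE(p,\vh)
= \aE(\Pitildenabla p - p, \Pitildenabla \vh) + \aE(p, \Pitildenabla \vh - \vh)
  + \SE\bigl((I-\Pitildenabla)p, (I-\Pitildenabla)\vh\bigr).
\]
On straight elements $\Pitildenabla p = p$ and $\SE((I-\Pitildenabla)p,\cdot)$ contributions are handled by the standard virtual element consistency argument (integration by parts turns $\aE(p,\Pitildenabla\vh - \vh)$ into a boundary term that vanishes by the definition of the nonconforming space and the edge DoFs); so the only surviving contributions come from curved edges. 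The key quantitative input is that $|\Pitildenabla p - p|_{1,\E}$ is small: this is precisely the mechanism quantified in the proof of Lemma~\ref{v-pikv2}, where the difference $\Pinabla v - \Pitildenabla v$ was bounded using the $O(h_\E)$ normal-vector defect~\eqref{O(h)} and the $O(h_\E^2)$ edge-displacement bound~\eqref{O(h2)}, producing an extra power of $h_\E$ beyond the naive estimate. Applying the same chain of estimates with $v = p$ a degree-$k$ polynomial — so that $|p|_{j,\E}$ is controlled by $h_\E^{1-j}\SemiNorm{p}_{1,\E}\lesssim h_\E^{1-j}\Norm{u}_{1,\E}$ via inverse inequalities, and ultimately by $h_\E^{1-j}(\,\SemiNorm{u}_{1,\E} + h_\E^s\SemiNorm{u}_{s+1,\E})$ after reintroducing~$u$ — yields $|\Pitildenabla p - p|_{1,\E} \lesssim h_\E^{s}\Norm{u}_{s+1,\E}$ (the gain of the extra $h_\E$ from the geometric estimates is exactly what upgrades the otherwise-$h_\E^{1/2}$-type bound to an optimal one). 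Combined with $|\Pitildenabla\vh|_{1,\E}\lesssim\SemiNorm{\vh}_{1,\E}$ (Lemma~\ref{PTcont}) and the stabilization continuity (Proposition~\ref{proposition:stability-SE}), this controls the first and third pieces of the middle term.

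For $\aE(p,\Pitildenabla\vh - \vh)$ I would integrate by parts: since $\Delta p$ and $\Delta\Pitildenabla\vh$ live in $\Pbb_{k-2}(\E)$ and the bulk DoFs of $\vh$ and $\Pitildenabla\vh$ agree, the volume term cancels, leaving $\sum_{\e}\int_\e (\nbfE\cdot\nabla p)(\Pitildenabla\vh - \vh)\,\mathrm{d}s$; on straight edges this vanishes because $\nbfE\cdot\nabla p\in\Pbb_{k-1}(\e) = \Pbbtilde_{k-1}(\e)$ and $\vh,\Pitildenabla\vh$ have matching edge moments up to degree $k-1$, while on curved edges $\nbfE\cdot\nabla p$ is \emph{not} in $\Pbbtilde_{k-1}(\e)$, so I insert $\Pitildeze(\nbfE\cdot\nabla p)$ and estimate the remainder $\|(I-\Pitildeze)(\nbfE\cdot\nabla p)\|_{0,\e}$ by the same geometric argument used in~\eqref{psin}--\eqref{psin2}, i.e.\ comparing $\nbfE\cdot\nabla p$ on $\e$ with its pullback mapped polynomial and invoking~\eqref{O(h)} and~\eqref{O(h2)}, which again costs an extra $h_\E$ and gives an $O(h_\E^{s})$ bound after pairing with $\|\Pitildenabla\vh - \vh\|_{0,\e}\lesssim h_\E^{1/2}\SemiNorm{\vh}_{1,\E}$ (trace plus Poincaré on the zero-average function). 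Summing all element contributions and applying Cauchy--Schwarz across elements, together with the Stein extension trick of Lemma~\ref{lemma:Stein} to convert the boundary-localized norms $\Norm{u}_{s-1/2,\Gamma_i}$ or $\Norm{\nabla u}_{s-1/2,\Gamma_i}$ into $\Norm{u}_{s+1,\Omega}$, gives the claimed estimate. The main obstacle I anticipate is bookkeeping the powers of $h_\E$ in the curved-edge terms: one must verify that each geometric defect~\eqref{O(h)}, \eqref{O(h2)} is genuinely exploited so the final exponent is $s$ and not $s-1/2$, and that the inverse-inequality bounds on $|p|_{j,\E}$ for the polynomial $p = \vpi$ do not eat the gain — this is the same subtlety flagged in the Remark following Lemma~\ref{v-pikv2}, and it is the reason the proof cannot simply mimic the straight-element consistency argument.
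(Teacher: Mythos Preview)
Your strategy of inserting $\vpi$ and isolating a polynomial-inconsistency defect is workable in principle, but the handling of $\aE(p,\Pitildenabla\vh-\vh)$ contains a genuine error: you assert that the volume term $\int_\E \Delta p\,(\Pitildenabla\vh-\vh)$ vanishes because ``the bulk DoFs of $\vh$ and $\Pitildenabla\vh$ agree'', and that the straight-edge terms vanish because ``$\vh,\Pitildenabla\vh$ have matching edge moments''. Neither is true. The operator $\Pitildenabla$ is \emph{not} a DoF-preserving interpolant; it is defined only through the weak identity~\eqref{RG-projection-bulk} and the single average constraint~\eqref{RG-projection-zero-average}. You have conflated $\Pitildenabla\vh$ with the DoF interpolant~$\vIE$. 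The correct cancellation comes precisely from~\eqref{RG-projection-bulk} itself: taking $q_k=p$ and $v=\vh$ there gives $\aE(p,\Pitildenabla\vh)=-\int_\E(\Delta p)\vh+\sum_\e\int_\e\Pitildeze(\nbfE\cdot\nabla p)\vh$, and subtracting the plain integration-by-parts expression for $\aE(p,\vh)$ yields $\aE(p,\Pitildenabla\vh-\vh)=-\sum_{\e\in\EcalEc}\int_\e(I-\Pitildeze)(\nbfE\cdot\nabla p)\,\vh$ directly. The paper exploits exactly this identity but bypasses the detour through~$\vpi$ altogether: it splits $\ahE(u,\vh)-\aE(u,\vh)=\aE(\Pitildenabla u,\Pitildenabla\vh-\vh)-\aE(u-\Pitildenabla u,\vh)+\SE((I-\Pitildenabla)u,(I-\Pitildenabla)\vh)$ and applies the identity to the first term with $\Pitildenabla u$ in place of~$p$; the other two terms are dispatched by Lemma~\ref{v-pikv2} and Proposition~\ref{proposition:stability-SE}.

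A second gap appears in your curved-edge estimate: you propose to bound $\|(I-\Pitildeze)(\nbfE\cdot\nabla p)\|_{0,\e}$ via the geometric argument~\eqref{psin}--\eqref{psin2}, but that yields only $h_\E^{1/2}\|\nabla p\|_{0,\E}$, and since $\nabla p=\nabla\vpi$ is $O(1)$ rather than $O(h_\E^s)$, pairing with $h_\E^{1/2}\SemiNorm{\vh}_{1,\E}$ produces only $O(h_\E)$. The paper instead estimates this factor by Lemma~\ref{lemma:proes}, obtaining $h_\E^{s-1/2}\|\nbfE\cdot\nabla\upi\|_{s-1/2,\e}$, and then controls the high-order edge norm by $\|u\|_{s+1,\E}$ via polynomial trace bounds; the geometric estimates~\eqref{psin2} are reserved for the piece where the polynomial factor is already small (namely $\nabla(\Pitildenabla u-\upi)$, bounded through Lemma~\ref{v-pikv2}). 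Without this distinction your bookkeeping of $h_\E$-powers cannot close at order~$s$.
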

\begin{proof}
For any element~$\E$, we have
\[
\begin{aligned}
\ahE(u,\vh)-\aE(u,\vh) 
& = \aE(\Pitildenabla u,\Pitildenabla\vh)+\SE((I-\Pitildenabla )u,(I-\Pitildenabla )\vh)-\aE(u,\vh)\\
& = \aE(\Pitildenabla u,\Pitildenabla\vh-\vh)-\aE(u-\Pitildenabla u,\vh)\\
&\quad+\SE((I-\Pitildenabla )u,(I-\Pitildenabla )\vh)
=: I_1+I_2+I_3.
 \end{aligned}  
\]
As for the term~$I_1$,
we denote $\bar{v}_h = \vh-\Pi_0^0\vh$ and use the definition of the operator $\Pitildenabla$, then 
\[
\begin{aligned}
I_1
& = \aE(\Pitildenabla u,\bar{v}_h-\Pitildenabla\bar{v}_h) 
= \sum_{\e\in\EcalE_c} \int_\e 
(I- \Pitildeze) (\nbfE\cdot \nabla \Pitildenabla u) \bar{v}_h~\mathrm{d}s\\
& \leq \|\bar{v}_h\|_{0,\partial\E} \sum_{\e\in\EcalE_c}
\Norm{(I- \Pitildeze) (\nbfE\cdot \nabla \Pitildenabla u)}_{0,\e}.
\end{aligned}
\]
Using the approximation of the operator $\Pitildeze$,
the Poincar\'e inequality,
and the trace inequality,
we infer, for~$\upi$ as in~\eqref{L2-projection-bulk},
the two inequalities
 \begin{equation}\label{I1}
\begin{aligned}
& \hE^{\frac12}
\Norm{(I- \Pitildeze) (\nbfE\cdot \nabla \Pitildenabla u)
 }_{0,\e}\\
& \lesssim \hE^{\frac12}
\Norm{(I- \Pitildeze) (\nbfE\cdot \nabla u_\pi)}_{0,\e}
+\hE^{\frac12}
\Norm{(I- \Pitildeze) (\nbfE\cdot \nabla (\Pitildenabla u-u_\pi))}_{0,\e}\\
& \lesssim \hE^{s} 
\Norm{\nbfE\cdot \nabla u_\pi}_{s-\frac12,\e}
+ \hE^{\frac12} \Norm{\nbfE\cdot \nabla (\Pitildenabla u - u_\pi)}_{0,\e}\\
&\lesssim \hE^\s\|\nabla u_\pi\|_{\s-\frac12,\e} 
    +\hE^{\frac12}\|\nabla( u_\pi-\Pitildenabla u)\|_{0,\e}
\lesssim \hE^\s\|u_\pi\|_{\s+1,\E}
    +|u_\pi-\Pitildenabla u|_{1,\E}\\
&\lesssim \hE^\s\Norm{u}_{\s+1,\E}
        +\hE^\s \|u-u_\pi\|_{\s+1,\E}
        +|u_\pi-u|_{1,\E}
        +\SemiNorm{u-\Pitildenabla u}_{1,\E}\\
&\lesssim \hE^\s \Norm{u}_{\s+1,\E}
        +\SemiNorm{u-\Pitildenabla u}_{1,\E}
\end{aligned}
\end{equation}
and
\[
\hE^{-\frac12}\|\bar{v}_h\|_{0,\partial \E}
\lesssim \hE^{-1}\|\bar{v}_h\|_{0,\E}+|\bar{v}_h|_{1,\E}
\lesssim\SemiNorm{\vh}_{1,\E}.
\]
As for the term~$I_2$,
by the continuity~\eqref{tpikbd} and approximation properties~\eqref{v-pikv} of~$\Pitildenabla$, we deduce
\[
I_2
\leq \SemiNorm{u-\Pitildenabla u}_{1,\E} \SemiNorm{\vh}_{1,\E}.
\]
We handle the term~$I_3$
using Proposition~\ref{proposition:stability-SE},
the Poincar\'e inequality,
the approximation properties~\eqref{v-pikv} of~$\Pitildenabla$,
and Lemma~\ref{PTcont}:
$$
\begin{aligned}
I_3
&\lesssim\big{(} \hE^{-2}\|(I-\Pitildenabla)u\|_{0,\E}^2
+\SemiNorm{(I-\Pitildenabla)u}_{1,\E}^2\big{)}^{\frac12}\\
& \quad\ {(} \hE^{-2}\|(I-\Pitildenabla)\vh\|_{0,\E}^2
+|(I-\Pitildenabla)\vh|_{1,\E}^2\big{)}^{\frac12} \\
& \lesssim \SemiNorm{(I-\Pitildenabla)u}_{1,\E} |(I-\Pitildenabla)\vh|_{1,\E}
\lesssim |u-\Pitildenabla u|_{1,\E} \SemiNorm{\vh}_{1,\E}.
\end{aligned}$$
The assertion follows collecting the above estimates,
summing over the elements,
and using the approximation properties of~$\Pitildenabla$ in Lemma~\ref{v-pikv2}.
\end{proof}

We are now in a position to state the abstract error analysis in the energy norm for method~\eqref{VEM}
and deduce optimal error estimates.

 \begin{theorem}\label{theorem:H1} 
Let $u$ and~$\uh$ be the solutions to~\eqref{model} and~\eqref{VEM}, respectively.
Then, for every $\uI$ in~$\Vh$, we have
\begin{equation}\label{Strang}
 \SemiNorm{u-\uh}_{1,h}
 \lesssim |u-\uI|_{1,h}
          +\sup_{\vh\in \Vh}
          \left(\frac{| (f-\fh,\vh)_{0,\Omega}|}{\SemiNorm{\vh}_{1,h}}
          +\frac{\Ncalh(u,\vh)}{\SemiNorm{\vh}_{1,h}}
          +\frac{|\ah(u,\vh)-a(u,\vh)|}{\SemiNorm{\vh}_{1,h}} \right).
\end{equation}
Furthermore, if~$u \in H^{s+1}(\Omega)$ and~$f \in H^{s-1}(\Omega)$, with $1 \le s \le k$,
we also have
\begin{equation} \label{error-estimates-H1}
\SemiNorm{u-\uh}_{1,h}
\lesssim h^{s}(\Norm{u}_{s+1,\Omega}
+\SemiNorm{f}_{s-1,\Omega}).
\end{equation}
\end{theorem}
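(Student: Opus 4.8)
The plan is to prove the abstract estimate~\eqref{Strang} first and then specialize it to the DoFs interpolant. Fix $\uI$ so that $\delta_h:=\uh-\uI$ is an admissible test function, i.e.\ $\uI\in\Vh^g(\taun)$ so that $\delta_h\in\Vh^0(\taun)$; by the triangle inequality $\SemiNorm{u-\uh}_{1,h}\le\SemiNorm{u-\uI}_{1,h}+\SemiNorm{\delta_h}_{1,h}$, so it suffices to bound $\SemiNorm{\delta_h}_{1,h}$. The global stability of Theorem~\ref{theorem:global-stability&well-posedness} gives $\SemiNorm{\delta_h}_{1,h}^2\lesssim\ah(\delta_h,\delta_h)$. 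I would expand $\ah(\delta_h,\delta_h)=\ah(\uh,\delta_h)-\ah(\uI,\delta_h)$, use~\eqref{VEM} to replace $\ah(\uh,\delta_h)$ by $(\fh,\delta_h)_{0,\Omega}$, and insert the continuous problem via the consistency identity~\eqref{eq:007}, obtaining
\[
\begin{aligned}
\ah(\delta_h,\delta_h)
&=(\fh,\delta_h)_{0,\Omega}-\ah(\uI,\delta_h)\\
&=(\fh-f,\delta_h)_{0,\Omega}-\Ncalh(u,\delta_h)+\big(a(u,\delta_h)-\ah(u,\delta_h)\big)+\ah(u-\uI,\delta_h),
\end{aligned}
\]
where $\ah(u,\cdot)$ is meaningful because $\Pitildenabla$ and the moments entering $\SE(\cdot,\cdot)$ act on $H^1(\E)$.

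The next step is to absorb the last summand into $\SemiNorm{u-\uI}_{1,h}$ by continuity of $\ah(\cdot,\cdot)$, which extends to arguments in $H^1(\taun)$. For the projection part one uses $\SemiNorm{\Pitildenabla w}_{1,\E}\lesssim\SemiNorm{w}_{1,\E}$ (Lemma~\ref{PTcont}, whose proof works verbatim for $w\in H^1(\E)$); for the stabilization part, since $(I-\Pitildenabla)w$ has zero average on $\E$ (if $k\ge2$) or on $\partial\E$ (if $k=1$) by~\eqref{RG-projection-zero-average}, the Poincar\'e-type bound of Proposition~\ref{proposition:stability-SE}---whose proof does not use that its argument is a virtual function---gives $\SE((I-\Pitildenabla)w,(I-\Pitildenabla)w)\lesssim\SemiNorm{(I-\Pitildenabla)w}_{1,\E}^2\lesssim\SemiNorm{w}_{1,\E}^2$. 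Hence $\ah(u-\uI,\delta_h)\lesssim\SemiNorm{u-\uI}_{1,h}\,\SemiNorm{\delta_h}_{1,h}$. Dividing the displayed identity by $\SemiNorm{\delta_h}_{1,h}$, bounding the remaining three summands by the corresponding quotients (each at most the supremum over $\vh\in\Vh$), and using the triangle inequality, yields~\eqref{Strang}.

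For the rate~\eqref{error-estimates-H1} I would take $\uI=\vI$, the global nonconforming DoFs interpolant of $u$ from Section~\ref{section:interpolation}; since $u=g$ on $\partial\Omega$, its boundary moments coincide with those of $g$, so $\vI\in\Vh^g(\taun)$ and $\delta_h$ is admissible. Then each of the four contributions in~\eqref{Strang} is already estimated by $h^s$ times the appropriate norm: $\SemiNorm{u-\vI}_{1,h}\lesssim h^s\Norm{u}_{s+1,\Omega}$ by Lemma~\ref{lemma:DoFs-interpolant-estimates}; the right-hand side term by Lemma~\ref{esf}; the nonconformity term by Lemma~\ref{lemma:nc-term}; and the inconsistency term $|a(u,\vh)-\ah(u,\vh)|$ by Lemma~\ref{esa}. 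Summing the four $h^s$-bounds gives~\eqref{error-estimates-H1}.

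Since the substantive analysis---the geometric-error estimates~\eqref{O(h)}--\eqref{O(h2)}, the approximation of the Ritz--Galerkin projector $\Pitildenabla$ (Lemma~\ref{v-pikv2}), the interpolation estimate (Lemma~\ref{lemma:DoFs-interpolant-estimates}), and the inconsistency estimate (Lemma~\ref{esa})---is already in hand, Theorem~\ref{theorem:H1} is essentially an assembly. I expect the only genuinely delicate point to be the continuity step, where $\ah(\cdot,\cdot)$ must be made sense of and bounded against the non-virtual function $u-\uI$; this is exactly what the zero-average refinement of Proposition~\ref{proposition:stability-SE} is for. A minor nuisance is the regularity bookkeeping when $k=1$, where Lemma~\ref{esf} needs $f\in H^1(\Omega)$ rather than merely $f\in H^{s-1}(\Omega)$.
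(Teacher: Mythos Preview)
Your proof is correct and follows essentially the same route as the paper: triangle inequality, coercivity of~$\ah$, the consistency identity~\eqref{eq:007}, and then the four lemmas for the rate. You actually supply more detail than the paper on two points the paper leaves implicit---namely that $\uI$ must lie in $\Vh^g(\taun)$ for $\delta_h$ to be an admissible test function, and that the continuity of $\ahE(\cdot,\cdot)$ extends to $H^1(\E)$ arguments via Lemma~\ref{PTcont} and the Poincar\'e-type form of Proposition~\ref{proposition:stability-SE}; the paper simply writes $\ahE(\uI-u,e_h)$ and $\ahE(u,e_h)$ and invokes ``standard manipulations.''
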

\begin{proof}
We first prove the Strang-type estimate~\eqref{Strang}.
We split~$u-\uh$ as~$(u-\uI )+(\uI-\uh)$,
use the triangle inequality, and get
\[
\SemiNorm{u-\uh}_{1,h}
\leq |u-\uI|_{1,h}+|\uI-\uh|_{1,h}.
\]
Set $e_h := \uh-\uI$.
Following the lines of \cite[Theorem~$4.3$]{AyusodeDios-Lipnikov-Manzini:2016} and recalling \eqref{eq:007},
the coercivity of~$\ah(\cdot,\cdot)$ allows us to write
\[
\begin{aligned}
|e_h|_{1,h}^2
&\lesssim \ah(e_h,e_h)= \ah(\uh,e_h)-\ah(\uI,e_h)\\
& = (\fh- f,e_h)_{0,\Omega} - \Ncalh(u,e_h)
-\sum_{\E\in\taun}\ahE(\uI-u,e_h)
-\sum_{\E\in\taun}\ahE(u,e_h)
+\sum_{\E\in\taun}\aE(u,e_h) .
\end{aligned}
\]
The proof of~\eqref{Strang} follows by standard manipulations of the right-hand side above.
The error estimates~\eqref{error-estimates-H1} finally follow bounding the terms
on the right-hand side of~\eqref{Strang} by means of
Lemmas~\ref{lemma:DoFs-interpolant-estimates}, \ref{esf}, \ref{lemma:nc-term},
and~\ref{esa}.
\end{proof}

\subsection{Convergence analysis in a weaker norm} \label{subsection:L2-error}
In this section, we prove $L^2$ error estimates for~\eqref{VEM} based on extra assumptions on~$\Omega$.
Also in this case, we follow the guidelines of the nonconforming element method error analysis;
see \cite[Section~$4.1$]{AyusodeDios-Lipnikov-Manzini:2016}.
We focus on the case~$k\ge3$,
and discuss the cases~$k=1$ and~$2$ in Remark~\ref{remark:k12} below.

\begin{theorem}\label{theorem:L2}
Let~$k\ge3$.
Assume that~$u$ and~$f$,
the solution and right-hand side of~\eqref{model},
belong to~$H^{s+1}(\Omega)$,
and~$H^{s-1}(\Omega)$, $1 \le s \le k$.
Let~$\uh$ be the solution to~\eqref{VEM}.
If~$\Omega$ is convex, then we have
\[
\begin{split}
\|u-\uh\|_{0,\Omega}
& \lesssim h^{s+1}(\Norm{u}_{s+1,\Omega}
+\SemiNorm{f}_{s-1,\Omega}).
\end{split}
\]
\end{theorem}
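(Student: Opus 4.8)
The plan is to run an Aubin--Nitsche duality argument, tracking carefully the extra power of $h$ that the curved edges try to eat. To lighten notation I restrict to $g=0$ (the inhomogeneous case follows by standard arguments: the additional boundary integrals are controlled exactly like the nonconformity term $\mathrm{T}_2$ below). Introduce the dual problem: find $z\in H^1_0(\Omega)$ with $-\Delta z=u-\uh=:e$ in $\Omega$; since $\Omega$ is convex, elliptic regularity gives $z\in H^2(\Omega)$, $\Norm{z}_{2,\Omega}\lesssim\Norm{e}_{0,\Omega}$, and $\Delta z\in L^2(\E)$ elementwise. Let $z_I\in\Vh^0(\taun)$ be the DoFs interpolant of $z$ from \eqref{vik}. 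Besides the results already in the paper I will use an $L^2$ interpolation bound $\Norm{v-\vIE}_{0,h}\lesssim h^{s+1}\Norm{v}_{s+1,\Omega}$ for $1\le s\le k$: this follows from Lemma~\ref{v-vpk}, the scaled Poincar\'e inequality~\eqref{poincare}, and the fact that $\vIE$ and $\vpE$ share the edge averages of $v$, hence $\int_{\partial\E}(\vIE-\vpE)\,\mathrm{d}s=0$; in particular $\Norm{z-z_I}_{0,h}\lesssim h^2\Norm{z}_{2,\Omega}$.

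Testing $-\Delta z=e$ against $e$, integrating by parts elementwise, inserting $z_I$, and using the discrete equation $\ah(\uh,z_I)=(\fh,z_I)_{0,\Omega}$ (legitimate since $z_I\in\Vh^0(\taun)$) together with the consistency identity~\eqref{eq:007}, one arrives at a decomposition of the form
\[
\Norm{e}_{0,\Omega}^2
= \sum_{\E\in\taun}\aE(u-\uh,z-z_I)
+ \Ncalh(z,u-\uh)
+ (f-\fh,z_I)_{0,\Omega}
+ \Ncalh(u,z_I)
+ \Big[\ah(\uh,z_I)-\textstyle\sum_{\E\in\taun}\aE(\uh,z_I)\Big]
=: \mathrm{T}_1+\mathrm{T}_2+\mathrm{T}_3+\mathrm{T}_4+\mathrm{T}_5
\]
(up to signs and an obvious rearrangement), where $\sum_\E\aE$ is the broken $H^1$ bilinear form. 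The goal is to bound each $\mathrm{T}_i$ by $h^{s+1}(\Norm{u}_{s+1,\Omega}+\SemiNorm{f}_{s-1,\Omega})\Norm{z}_{2,\Omega}$ and conclude with $\Norm{z}_{2,\Omega}\lesssim\Norm{e}_{0,\Omega}$.

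The first four terms use tools already available. For $\mathrm{T}_1$: Cauchy--Schwarz, the energy estimate of Theorem~\ref{theorem:H1} for $u-\uh$, and Lemma~\ref{lemma:DoFs-interpolant-estimates} with $s=1$ for $z-z_I$ give $\lesssim h^{s}(\cdots)\cdot h\Norm{z}_{2,\Omega}$. For $\mathrm{T}_2$ and $\mathrm{T}_4$ one exploits that $\jump{u-\uh}$ and $\jump{z_I}$ have vanishing $\Pbbtilde_{k-1}(\e)$-moments on every edge (including boundary ones, since $u=g=0$ on $\partial\Omega$ and $\uh,z_I\in\Vh^0$): subtract $\Pitildeze$ from the relevant smooth gradient and elementwise averages from the jumps, then estimate the two factors by Lemma~\ref{lemma:proes} and by the trace and Poincar\'e inequalities, using the smallness of $\nabla u-\Pitildeze\nabla u$ and of the jumps of $z_I$ (together with the $L^2$ interpolation bound above for $z$); this yields $h^{s-1/2}\cdot h^{3/2}\Norm{z}_{2,\Omega}$ in both cases. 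For $\mathrm{T}_3$, note $(\fh,v_h)_{0,\Omega}=(\PizEkmt f,v_h)_{0,\Omega}$ with $\PizEkmt$ understood elementwise; splitting $z_I=(z_I-z)+z$ and using that $k\ge3$ (so $\PizEkmt$ resolves $H^2$ functions to order $h^2$) together with $\Norm{z-z_I}_{0,h}\lesssim h^2\Norm{z}_{2,\Omega}$ gives $\lesssim h^{s-1}\SemiNorm{f}_{s-1,\Omega}\cdot h^2\Norm{z}_{2,\Omega}$.

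The genuinely delicate term is $\mathrm{T}_5$, which measures the lack of polynomial consistency produced by the curved edges (Remarks~\ref{remark:RG-projection-inconsistency},~\ref{remark:inconsistency}); a naive bound would cost only $O(h^s)$, since $z_I$ carries $O(1)$ energy. Writing elementwise
\[
\ahE(\uh,z_I)-\aE(\uh,z_I)
= \SE\big((I-\Pitildenabla)\uh,(I-\Pitildenabla)z_I\big)
- \int_\E\nabla(\uh-\Pitildenabla\uh)\cdot\nabla z_I
+ \int_\E\nabla\Pitildenabla\uh\cdot\nabla(\Pitildenabla z_I-z_I),
\]
the first summand is controlled by Proposition~\ref{proposition:stability-SE} and by $\SemiNorm{(I-\Pitildenabla)\uh}_{1,\E}\lesssim h^s(\cdots)$, $\SemiNorm{(I-\Pitildenabla)z_I}_{1,\E}\lesssim h\Norm{z}_{2,\E}$ (Theorem~\ref{theorem:H1}, Lemma~\ref{v-pikv2} with $s=1$, Lemma~\ref{PTcont}). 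For the remaining two summands I will use the defining identity~\eqref{RG-projection-bulk} of $\Pitildenabla$, which converts each into a sum over the \emph{curved} edges only of terms $\int_\e(I-\Pitildeze)(\nbfE\cdot\nabla q)\,w\,\mathrm{d}s$ with $q$ a polynomial of degree $\le k$ and $w\in\{\uh,z_I\}$. The crucial observation is that, after replacing $\Pitildenabla\uh$ by $u$ (an $O(h^s)$ perturbation by Lemma~\ref{v-pikv2} and Theorem~\ref{theorem:H1}), the curved-edge contribution of $\int_\E\nabla\Pitildenabla\uh\cdot\nabla(\Pitildenabla z_I-z_I)$ cancels the curved-edge part of $\mathrm{T}_4=\Ncalh(u,z_I)$; the surviving residuals, together with the $\nabla(\uh-\Pitildenabla\uh)$ term, are estimated by the geometric bound distilled in~\eqref{psin2} for the polynomial pieces and by Lemma~\ref{lemma:proes} for the smooth pieces, each paired with $\Norm{z_I}_{0,\e}=\Norm{z_I-z}_{0,\e}\lesssim h^{3/2}\Norm{z}_{2,\E}$ (valid because curved edges lie on $\partial\Omega$, where $z$ vanishes and $z_I$ has zero $\Pbbtilde_{k-1}(\e)$-moments) and with~\eqref{poincare} to replace traces and jumps of $\uh$ by $\SemiNorm{\uh}_{1,\E}$-quantities; all residuals come out $O(h^{s+1})$. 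Collecting $\mathrm{T}_1,\dots,\mathrm{T}_5$, summing over the mesh, and dividing by $\Norm{e}_{0,\Omega}$ using $\Norm{z}_{2,\Omega}\lesssim\Norm{e}_{0,\Omega}$ yields the claim. The main obstacle is precisely $\mathrm{T}_5$: its treatment hinges on the cancellation against the nonconformity term $\mathrm{T}_4$ and on the $h^s$-smallness of $\nabla(\Pitildenabla\uh-u)$ provided by Lemma~\ref{v-pikv2}; the cases $k=1,2$ (where $\fh$ uses $\Pi_0^{0,\E}$ and the right-hand side term is treated separately) are addressed in Remark~\ref{remark:k12}.
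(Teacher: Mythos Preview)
Your proposal follows the same Aubin--Nitsche framework as the paper, and your five-term decomposition $\mathrm{T}_1,\dots,\mathrm{T}_5$ matches the paper's (with $\phi$ in the role of your $z$). The treatment of $\mathrm{T}_1,\dots,\mathrm{T}_4$ is essentially identical.

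The genuine difference lies in $\mathrm{T}_5$. The paper does \emph{not} use any cancellation against $\mathrm{T}_4$. Instead it splits the consistency defect as $I_1+I_2+I_3+I_4$ with
\[
I_1=\aE(\Pitildenabla\uh,\Pitildenabla\phiI-\phiI),\quad
I_2=-\aE(\uh-\Pitildenabla\uh,\Pitildenabla\phiI),\quad
I_3=-\aE(\uh-\Pitildenabla\uh,\phiI-\Pitildenabla\phiI),
\]
and $I_4$ the stabilisation term. The point of separating $I_2$ from $I_3$ is that only $I_1$ and $I_2$ convert to curved-edge integrals via~\eqref{RG-projection-bulk}, because in each of them the \emph{polynomial} slot is occupied ($\Pitildenabla\uh$ in $I_1$, $\Pitildenabla\phiI$ in $I_2$). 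Your grouping $-\int_\E\nabla(\uh-\Pitildenabla\uh)\cdot\nabla z_I$ does \emph{not} directly convert, since $z_I$ is not a polynomial; you would need to split off $\Pitildenabla z_I$ first, which is exactly the paper's $I_2$/$I_3$ decomposition. Once on the curved edges, the paper extracts the extra factor of $h$ directly: for $I_1$ from $\|(I-\Pitildeze)\phiI\|_{0,\e}\lesssim \hE^{3/2}\|\phi\|_2$ (via Lemma~\ref{lemma:proes} and $\phiI\approx\phi$), and for $I_2$ from the geometric bound~\eqref{psin2} applied to $\bm\psi=\nabla(\Pitildenabla\phiI-\phiI)$ paired with $\|(I-\Pitildeze)\bar u_h\|_{0,\e}$; $I_3$ and $I_4$ are handled by Cauchy--Schwarz and Lemma~\ref{PTcont}. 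Your cancellation idea between the curved-edge part of $I_1$ and the curved-edge part of $\mathrm{T}_4$ is a legitimate alternative (indeed, on boundary curved edges $\Pitildeze z_I=0$, so the two integrands match after replacing $\Pitildenabla\uh$ by $u$), but the paper's direct route avoids tracking signs and the replacement error, and handles $I_2$ by the same mechanism rather than by a separate argument.
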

\begin{proof}
Consider the dual problem
\[
\begin{cases}
-\Delta \phi = u-\uh & \text{in } \Omega,\\
\phi         = 0     & \text{on } \partial\Omega.
\end{cases}
\]
The convexity of~$\Omega$ entails that $\phi\in H^2(\Omega)\cap H^1_0(\Omega)$ is the unique solution to the above problem
and satisfies the elliptic regularity estimates
\[
\|\phi\|_{2,\Omega}\lesssim \|u-\uh\|_{0,\Omega}.
\]
Proceeding as in \cite[Theorem~$4.5$]{AyusodeDios-Lipnikov-Manzini:2016}
using that~$k\ge3$,
and taking~$\phiI$ in~$\Vh$ as the DoFs interpolant of~$\phi$ in~\eqref{vik},
we obtain
\[
\begin{aligned}
\|u-\uh\|_{0,\Omega}^2
&= \sum_{\E\in\taun}\aE(\phi - \phiI,u-\uh)
    +\Ncalh(\phi,u-\uh)+\Ncalh(u,\phiI)\\
&\quad +  (f-\fh, \phiI)_{0,\Omega}
    + \sum_{\E\in\taun}(\ahE(\uh,\phiI)-\aE(\uh,\phiI))\\
&\lesssim (h\SemiNorm{u-\uh}_{1,h} +h^{s+1})\Norm{u}_{s+1,\Omega} 
                \|u-\uh\|_{0,\Omega}\\
&\quad + h^2 \|f-\fh\|_{0,\Omega} \|u-\uh\|_{0,\Omega}
 + \sum_{\E\in\taun}(\ahE(\uh,\phiI)-\aE(\uh,\phiI)).
\end{aligned}
\]
For any element~$\E$, we have
\[
\begin{aligned}
& \ahE(\uh,\phiI)-\aE(\uh,\phiI) \\
& = \aE(\Pitildenabla \uh,\Pitildenabla\phiI)-\aE(\uh,\phiI)
 +\SE((I-\Pitildenabla )\uh,(I-\Pitildenabla )\phiI)\\
&= \aE(\Pitildenabla\uh,\Pitildenabla\phiI-\phiI)
-\aE(\uh-\Pitildenabla\uh,\Pitildenabla\phiI)\\
&\quad-\aE(\uh-\Pitildenabla\uh,\phiI-\Pitildenabla\phiI)
+\SE((I-\Pitildenabla )\uh,(I-\Pitildenabla )\phiI)
=: I_1 + I_2 + I_3 + I_4.
 \end{aligned}
 \]
As for the term~$I_1$,
we use the definition of~$\Pitildenabla$,
see~\eqref{RG-projection-bulk}--\eqref{RG-projection-zero-average},
and obtain
\[
\begin{aligned}
I_1
& = \sum_{\E\in\taun} \sum_{\e\in\EcalEc} 
\int_\e (I- \Pitildeze) (\nbfE\cdot \nabla \Pitildenabla \uh) \phiI~ \mathrm{d}s \\
& = \sum_{\E\in\taun} \sum_{\e\in\EcalEc} \int_\e
(I- \Pitildeze) (\nbfE\cdot \nabla \Pitildenabla \uh) \ (I- \Pitildeze) \phiI~ \mathrm{d}s\\
& \leq \sum_{\E\in\taun} \sum_{\e\in\EcalEc}
\Norm{(I- \Pitildeze) (\nbfE\cdot \nabla \Pitildenabla \uh) }_{0,\e}
\Norm{(I- \Pitildeze)\phiI}_{0,\e}.
\end{aligned}
\]
Similarly with the inequality \eqref{I1}, we then infer
\[
\begin{aligned}
& \hE^{\frac12} \Norm{(I- \Pitildeze)
(\nbfE\cdot \nabla (\Pitildenabla \uh)) }_{0,\e}\\
& \leq \hE^{\frac12}
\Norm{(I- \Pitildeze) (\nbfE\cdot \nabla \Pitildenabla u)
}_{0,\e}
+ \hE^{\frac12} 
\Norm{(I- \Pitildeze) ( \nbfE\cdot \nabla (\Pitildenabla u - \uh) )}_{0,\e}\\
&\lesssim \hE^{s} \Norm{u}_{\s+1,\E}
        + \hE^{\frac12}\|\nabla \Pitildenabla(u-\uh)\|_{0,\e}
\lesssim \hE^{s} \Norm{u}_{\s+1,\E}
        + | \Pitildenabla(u-\uh) |_{1,\E}\\
&\lesssim \hE^{s} \Norm{u}_{\s+1,\E}
        + |u- \Pitildenabla u|_{1,\E}
        + |\Pitildenabla(u-\uh)|_{1,\E}\\
& \lesssim \hE^{s} \Norm{u}_{\s+1,\E}
        + |u- \Pitildenabla u|_{1,\E}
        + \SemiNorm{u-\uh}_{1,\E},
\end{aligned}
\]
and
\[
\begin{aligned}
& \hE^{-\frac12}\|(I- \Pitildeze)\phiI\|_{0,\e}
\leq \hE^{-\frac12}\|(I- \Pitildeze)(\phiI-\phi)\|_{0,\e}+\hE^{-\frac12}\|(I- \Pitildeze)\phi\|_{0,\e}\\
& \lesssim \hE^{-1} \|\phiI-\phi\|_{0,\E}+|\phiI-\phi|_{1,\E}+\hE\|\phi\|_{\frac{3}{2},\e}
\lesssim |\phiI-\phi|_{1,\E}+\hE\|\phi\|_{\frac{3}{2},\e}.
\end{aligned}
\]
As for the term~$I_2$,
we set $\bm{\psi} := \nabla \Pitildenabla\phiI - \nabla\phiI$
and $\bar{u}_h := \uh-\Pi_0^0\uh$,
and arrive at
\[
\begin{aligned}
& I_2
= - \aE(\bar{u}_h-\Pitildenabla\bar{u}_h,\Pitildenabla\phiI)
= - \sum_{\e\in\EcalEc}\int_{\e}
(I-\Pitildeze) (\nbfE\cdot \nabla \Pitildenabla \phiI) \bar{u}_h ~\mathrm{d}s \\
& = - \sum_{\e\in\EcalEc} \int_{\e}
(I-\Pitildeze) (\nbfE\cdot \nabla (\Pitildenabla \phiI-\phiI)) \bar{u}_h~\mathrm{d}s\\
& = - \sum_{\e\in\EcalEc}\int_{\e}(I-\Pitildeze) (\bm{\psi} \cdot \nbfE) (I-\Pitildeze)\bar{u}_h~\mathrm{d}s
\leq \sum_{\e\in\EcalEc}\|(I-\Pitildeze) (\bm{\psi}\cdot\nbfE)\|_{0,\e}\|(I-\Pitildeze)\bar{u}_h\|_{0,\e}.
\end{aligned}
\]
Inequality~\eqref{psin2} entails
\[
\begin{aligned}
\|(I-\Pitildeze) (\bm{\psi}\cdot\nbfE)\|_{0,\e}
& \lesssim \hE^{\frac12}\|\bm{\psi}\|_{0,\E}\\
& \lesssim \hE^{\frac12}| \Pitildenabla\phiI - \phiI|_{1,\E}
 \lesssim \hE^{\frac12}( |\phi-\phiI|_{1,\E}+|(I-\Pitildenabla)\phi|_{1,\E})
 \end{aligned}
\]
and
\[
\begin{aligned}
& \|(I-\Pitildeze)\bar{u}_h\|_{0,\e}\
\leq\|(I-\Pitildeze)(\bar{u}-\bar{u}_h)\|_{0,\e}
+ \|(I-\Pitildeze)\bar{u}\|_{0,\e}
\lesssim \|\bar{u}-\bar{u}_h\|_{0,\e}
        +\hE^{s} \|\bar{u}\|_{\s,\e}\\
&\lesssim \hE^{-\frac12}\|\bar{u}-\bar{u}_h\|_{0,\E}+\hE^{\frac12}|\bar{u}-\bar{u}_h|_{1,\E}
         +\hE^{s}\Norm{u}_{\s,\e}
\lesssim \hE^{\frac12}\SemiNorm{u-\uh}_{1,\E}
        +\hE^{s}\Norm{u}_{\s,\e}.
 \end{aligned}
\]
Furthermore, we have
$$
| I_3 |
\leq |\uh-\Pitildenabla\uh|_{1,h}|\phiI-\Pitildenabla\phiI|_{1,h}.
$$
Finally, we bound $I_4$:
$$
\begin{aligned}
|I_4|
&\lesssim\big{(} \hE^{-2}\|(I-\Pitildenabla)\uh\|_{0,\E}^2
+|(I-\Pitildenabla)\uh|_{1,\E}^2\big{)}^{\frac12}
{(} \hE^{-2}\|(I-\Pitildenabla)\phiI\|_{0,\E}^2
+|(I-\Pitildenabla)\phiI|_{1,\E}^2\big{)}^{\frac12} \\
&\lesssim
|\uh-\Pitildenabla\uh|_{1,\E}|\phiI-\Pitildenabla\phiI|_{1,\E}.
\end{aligned}
$$
Each of the above terms can be readily estimated
by adding and subtracting~$u$, $\Pitildenabla u$, $\phi$, and~$\Pitildenabla\phi$:
\[
\begin{aligned}
|\uh-\Pitildenabla\uh|_{1,\E}
&\lesssim \SemiNorm{u-\uh}_{1,\E}
    +\SemiNorm{(I-\Pitildenabla)u}_{1,\E}
    +|\Pitildenabla(u-\uh)|_{1,\E}\\
&\lesssim \SemiNorm{u-\uh}_{1,\E}
        +\SemiNorm{(I-\Pitildenabla)u}_{1,\E}
\end{aligned}
\]
and
\[
\begin{aligned}
|\phiI-\Pitildenabla\phiI|_{1,\E}
&\lesssim\ |\phi-\phiI|_{1,\E}
    +|(I-\Pitildenabla)\phi|_{1,\E}
    +|\Pitildenabla(\phi-\phiI)|_{1,\E}\\
&\lesssim \ |\phi-\phiI|_{1,\E}
            +|(I-\Pitildenabla)\phi|_{1,\E}.
\end{aligned}
\]
Combining the above estimates
and using Lemmas~\ref{lemma:DoFs-interpolant-estimates}, \ref{esf}, \ref{lemma:nc-term},
and~\ref{esa}, the assertion follows.
\end{proof}

\begin{remark} \label{remark:k12}
The proof of Theorem~\ref{theorem:L2}
does not cover the cases~$k=1$ and~$2$,
the reason being the presence of the term
\[
(f-\fh, \phiI)_{0,\Omega}.
\]
Following~\cite[Section~2.7]{BeiraodaVeiga-Brezzi-Marini:2013},
one can derive optimal convergence in the $L^2$ norm for~$k=1$
and one order suboptimal convergence for~$k=2$.
In this case, optimality can be recovered
by either a suitable modification of the right-hand side
(using better discretization of the test function in the discrete loading term)
or by adapting the enhanced version of the virtual element method.
\end{remark}

\section{The 3D version of the method}\label{section:3D}

In this section, we discuss the nonconforming virtual element method for 3D curved domain
and why the theoretical results are an extension of their two-dimensional counterparts.
Polyhedral meshes are now employed
and curved faces are the parametrizations of flat polygons.
The geometric assumptions \textbf{(G1)}--\textbf{(G2)}
in Section~\ref{section:meshes} are still required,
but are valid for~$\E$ being a polyhedron.
We further require
\begin{itemize}
\item[\textbf{(G3)}] every face~$F$ of~$\E$ (or, if the face is curved, the associated parametric polygon $\widehat{F}$) is star-shaped with respect to all the points of a disk radius larger than or equal to $\rho\hE$.
\end{itemize}
The local nonconforming virtual element space
on the (possibly curved) polyhedron $\E$ reads
\[
\VhE
:= \left\{ \vh \in H^1(\E) 
\middle| \ \Delta \vh \in \Pbb_{k-2}(\E),\
\nbfE \cdot \nabla \vh \in \Pbbtilde_{k-1}(F)\ 
\forall F \subset \partial K
\right\} ,
\]
where $\Pbbtilde_{k-1}(F)$ is the push forward on $F$ of $\Pbb_{k-1}(\widehat{F})$. 
The definition of~$\Vh$ is the natural extension of its 2D counterpart.
The degrees of freedom are given by scaled (possibly curved) face moments
with respect to (possibly mapped) polynomials up to order~$k-1$
and scaled bulk moments up to order~$k-2$.
The global nonconforming virtual element space is obtained
by a nonconforming coupling of the face degrees of freedom as in the 2D case:
\[
\Vh(\taun)
:=
\left\{ \vh \in H^{1,nc}(\taun,k) \middle|
\vh{}_{|\E} \in \VhE \; \forall \E\in\taun \right\},
\]
where $\llbracket \cdot \rrbracket_F$ is the jump across the (possibly curved) face and
\[
H^{1,nc}(\taun,k):=
\left\{
v\in H^1(\taun) \middle|  
\int_{F}\llbracket v \rrbracket_F\cdot\nbf_F q \ {\mathrm{d}} F,\;
\forall q\in \Pbbtilde_{k-1}(F),\ \forall F\subset\partial \E,~\E\in\taun
\right\}.
\]
No essential modifications of the 2D structure take place in 3D.
This is the major advantage of using the nonconforming version of the method.
The 3D version of the conforming VEM on curved domains in~\cite{BeiraodaVeiga-Russo-Vacca:2019}
should involve curved virtual element spaces on faces
that are currently unknown.

The construction of the local and global discrete bilinear forms in Section~\ref{subsection:polynomial-projections-bf}
directly extends to the 3D case.
The only difference resides in the design of the stabilizing term
that requires a different scaling.
It can be proved that the stabilization
\[
S^\E(u_h,\vh) 
:= \sum_{l=1}^{N_{\mathrm{dof}(K)}} \hE
\bm{D}^l(\uh)\bm{D}^l(\vh)\quad\quad\forall u_h,~\vh\in \VhE
\]
satisfies the 3D version of the estimates in~\eqref{stability-bounds}.
In fact, the key technical tools in stability analysis
are inverse inequalities, see Lemma \ref{lemma:inverse-L2Laplacian-H1};
the Neumann trace inequality, see Lemma~\ref{lemma:Neumann-trace-inequality}; 
polynomial inverse inequalities on the element boundaries, see Lemma \ref{lemma:inv012};
``direct estimates'' such as Poincar\'e-type and trace-type inequalities.
All such bounds are valid also in 3D.

The abstract error analysis is dealt with similarly to the 2D case; see Theorems~\ref{theorem:H1} and~\ref{theorem:L2}.
The only minor modification is in the definition of the nonconformity term
which in 3D is defined as
\[
\Ncalh(u,v) 
:= \sum_{F\in\Ecalh^3} \int_F \llbracket v\rrbracket_F\cdot\nabla u \ {\mathrm{d}} F,
\]
where~$\Ecalh^3$ denotes the set of (curved) faces in the polyhedral decomposition.
Thus, the proof of error estimates for the nonconforming term follows along the same lines as in the 2D case,
since \cite[Lemma 4.1]{AyusodeDios-Lipnikov-Manzini:2016}
is valid in arbitrary space dimension.

\section{Numerical experiments}\label{section:numerical-esperiments}

In this section, we present some numerical experiments,
so as to validate of Theorems~\ref{theorem:H1}
and~\ref{theorem:L2}.

Since the energy and $L^2$ errors are not computable,
we rather consider the computable error quantities:
\begin{equation} \label{computable-quantities}
\begin{aligned}
E_{H^1}
:= \frac{(\sum_{\E\in\taun}
| u- \Pitildenabla \uh|_{1,\E}^2)^{\frac12}}{|u|_{1,\Omega}},
\qquad 
E_{L^2}
:= \frac{(\sum_{\E\in\taun}
\|u- \Pitildenabla \uh\|_{0,\E}^2)^{\frac12}}{\Norm{u}_{0,\Omega}}.\\
\end{aligned}
\end{equation}
The two quantities above convergence with the same rate as the exact errors $\SemiNorm{u-\uh}_{1,h}$ and $\Norm{u-\uh}_{0,\Omega}$.

In Section~\ref{subsection:curved-boundary},
we consider two test cases on domains with curved boundary;
in Section~\ref{subsection:curved-interface},
we consider a test case with an internal curved interface and curved boundary.

\subsection{Curved boundary} \label{subsection:curved-boundary}

As a first test case,
we consider the domain $\Omega=\{(x,y)|x^2+y^2 < 1\}$, see Figure~\ref{figure:test-case-1} (left-panel)
and the exact (analytic) solution
\[
u_1(x,y) =\sin(\pi x)\cos(\pi y).
\]
The function~$u_1$ has inhomogeneous Dirichlet boundary conditions over~$\partial \Omega$.

\begin{figure}[H]
\begin{center}
\includegraphics[width=2.5in]{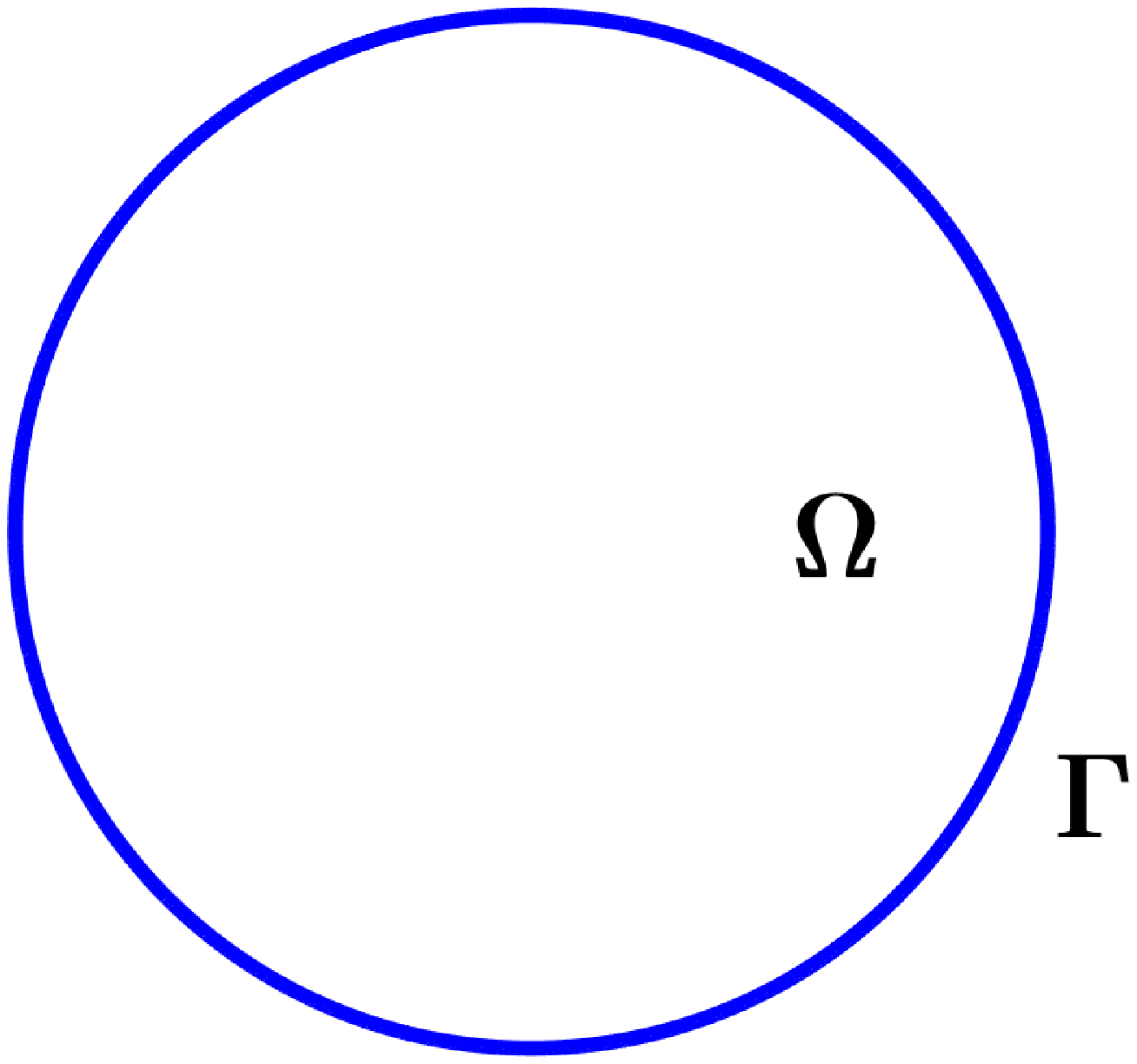} 
\includegraphics[width=2.5in]{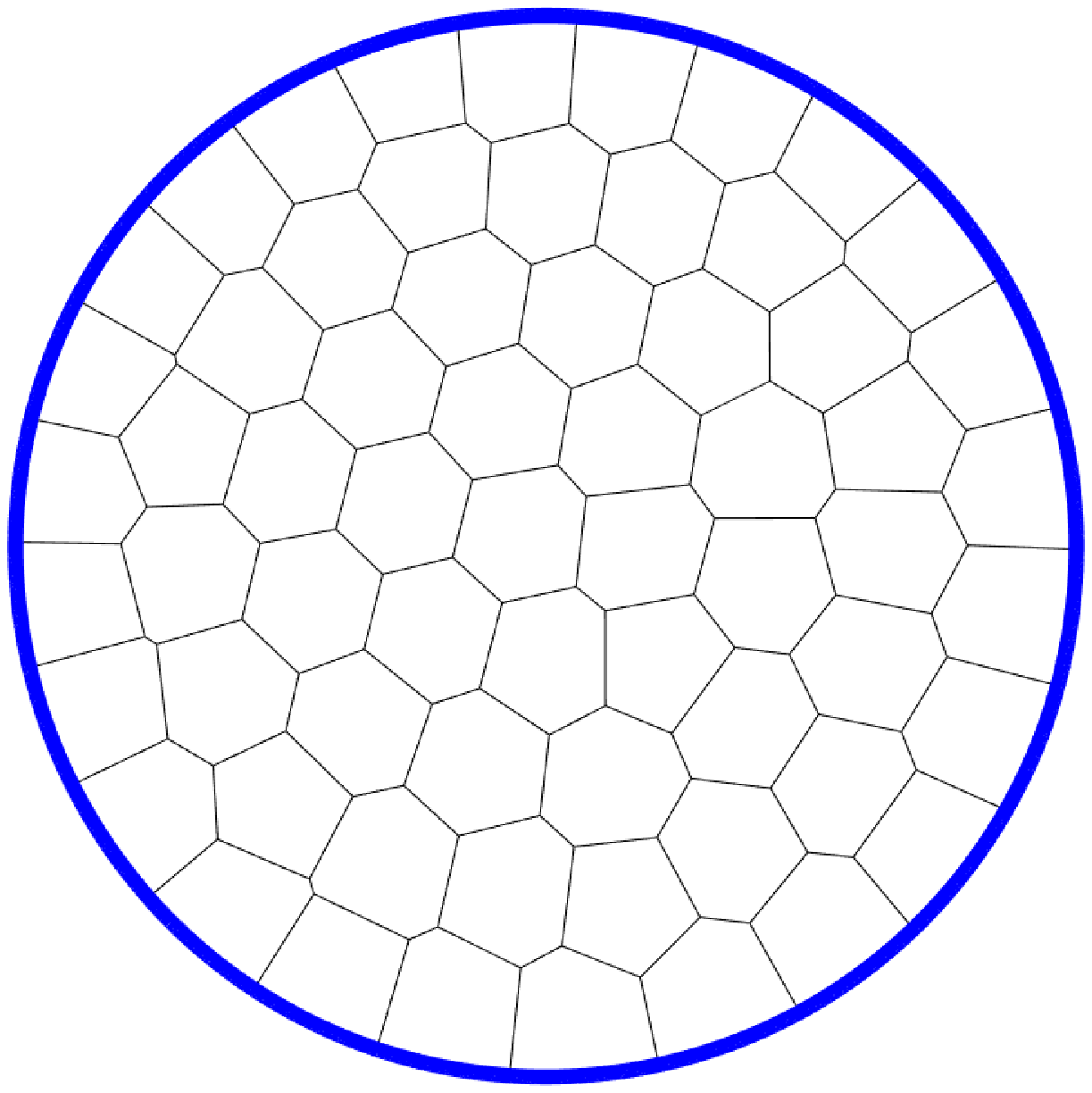}
\caption{\small{
Left-panel: a circular domain~$\Omega$.
Right-panel: an example of (curved) Voronoi mesh over~$\Omega$.}}
\label{figure:test-case-1}
\end{center}
\end{figure}

In Figure~\ref{figure:convergence-test-case-1},
we show the convergence of the two error quantities in~\eqref{computable-quantities} on the given sequence of Voronoi meshes with decreasing mesh size;
see Figure~\ref{figure:test-case-1} (right-panel) for a sample mesh.
We consider virtual elements of ``orders'' $k=2$, $3$, and~$4$.

\begin{figure}[H]
\begin{center}
\includegraphics[width=2.5in]{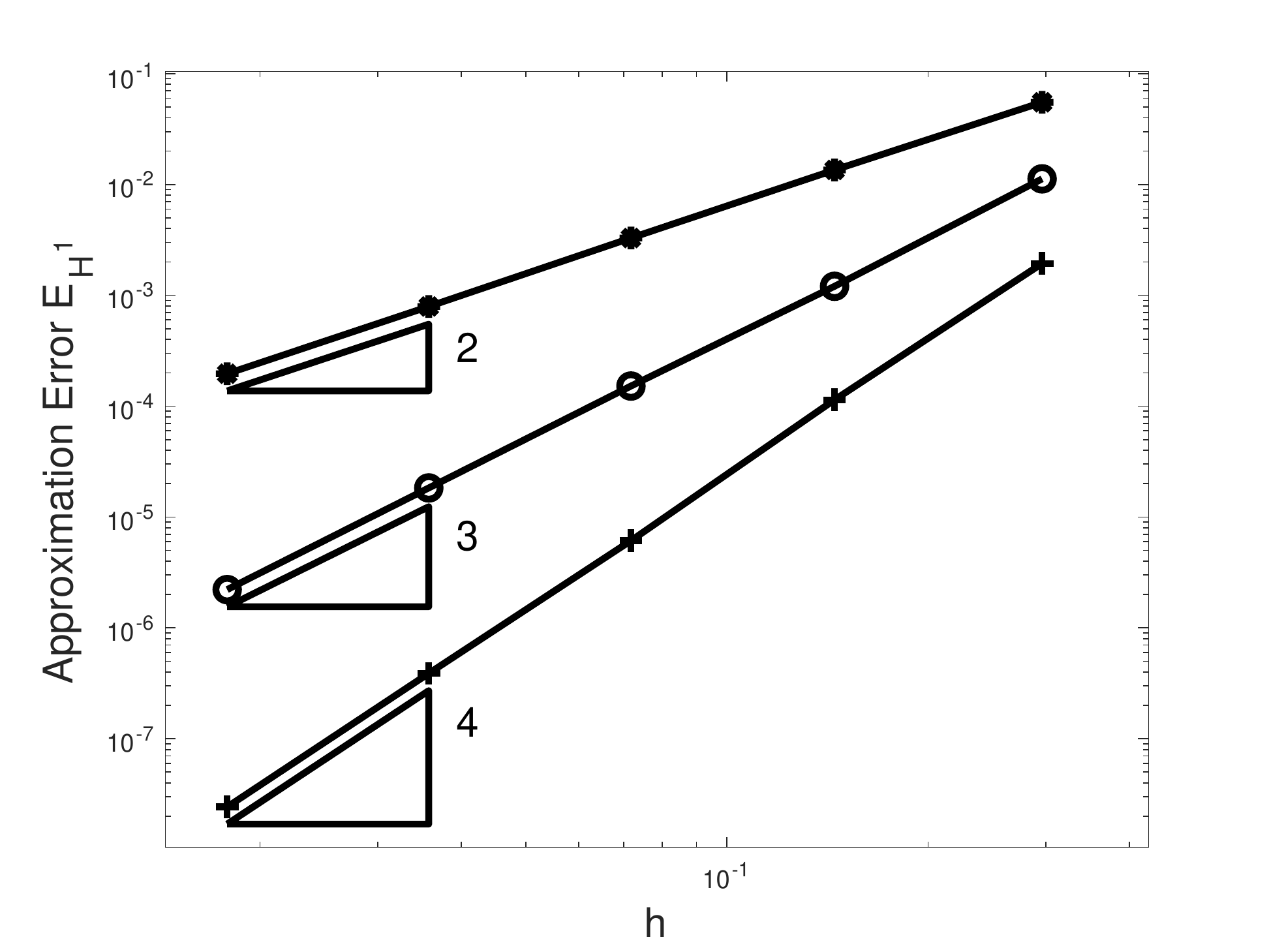} 
\includegraphics[width=2.5in]{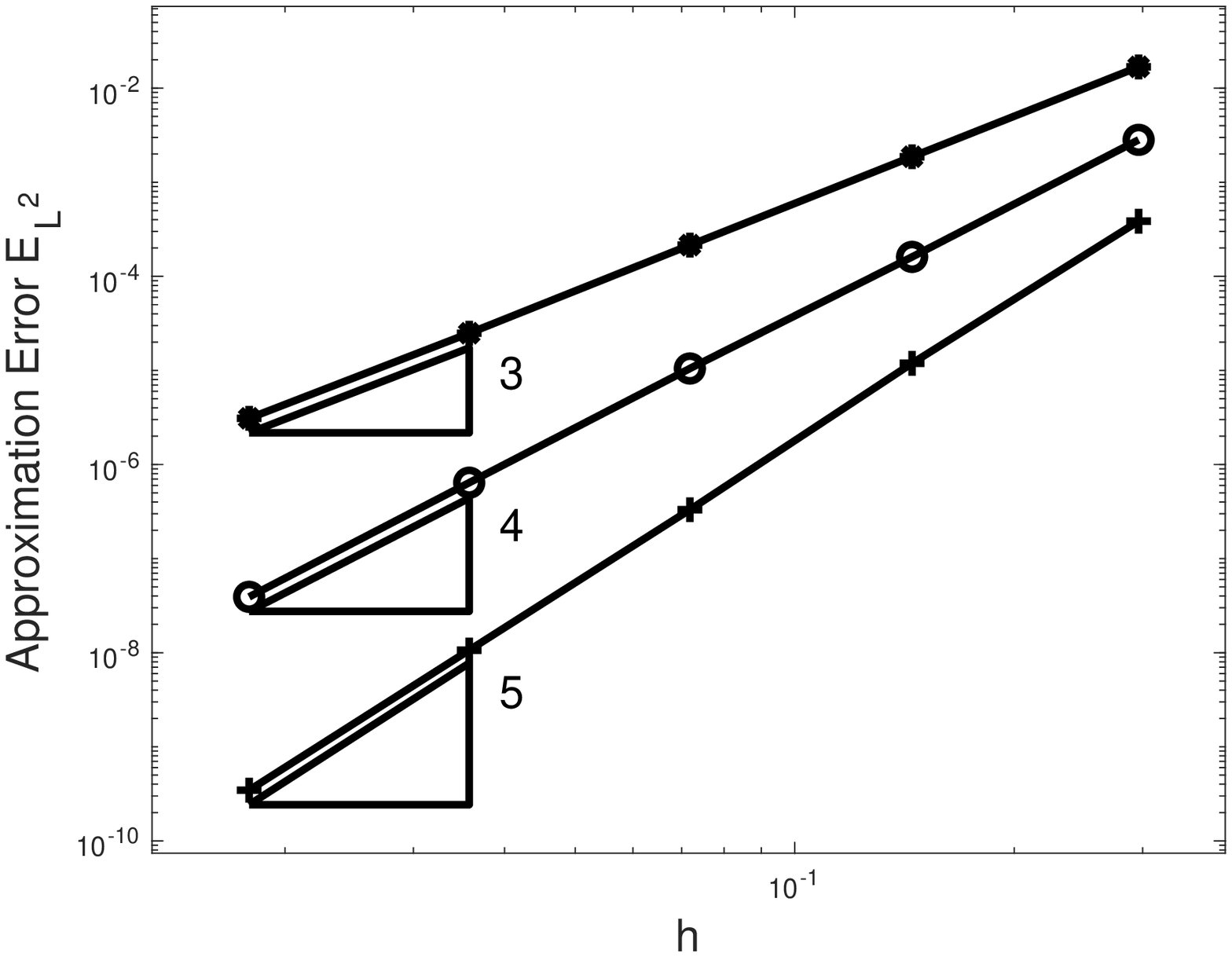}
\caption{\small{Left-panel: convergence of~$E_{H^1}$.
Right-panel: convergence of~$E_{L^2}$.
The exact solution is~$u_1$.
Voronoi meshes with decreasing mesh size are employed.
The ``orders'' of the virtual element spaces are
$k=2$, $3$, and~$4$.}}
\label{figure:convergence-test-case-1}
\end{center}
\end{figure}

As a second test case,
we consider the curved domain~$\Omega$
introduced in~\cite{BeiraodaVeiga-Russo-Vacca:2019}
and defined as
\begin{equation} \label{eqn:sindomain}
  \Omega:=\{(x, y) ~\text{s.t}~ 0 < x < 1,~\text{and}~ g_1(x) < y < g_2(x)\},    
\end{equation}
where
\[
g_1(x):= \frac{1}{20}\sin(\pi x)\quad \text{and}\quad g_2(x):= 1 + \frac{1}{20}\sin(3\pi x).
\]
We represent the domain in Figure~\ref{figure:test-case-2domain}.
On such an~$\Omega$, we consider
the exact (analytic) solution
\[
u_2(x,y) =-(y-g_1(x))(y-g_2(x))(1-x)x(3+\sin(5x)\sin(7y)).
\]
The function~$u_2$ has homogeneous Dirichlet boundary conditions over~$\partial \Omega$.

\begin{figure}[H]
\begin{center}
\includegraphics[width=2.5in]{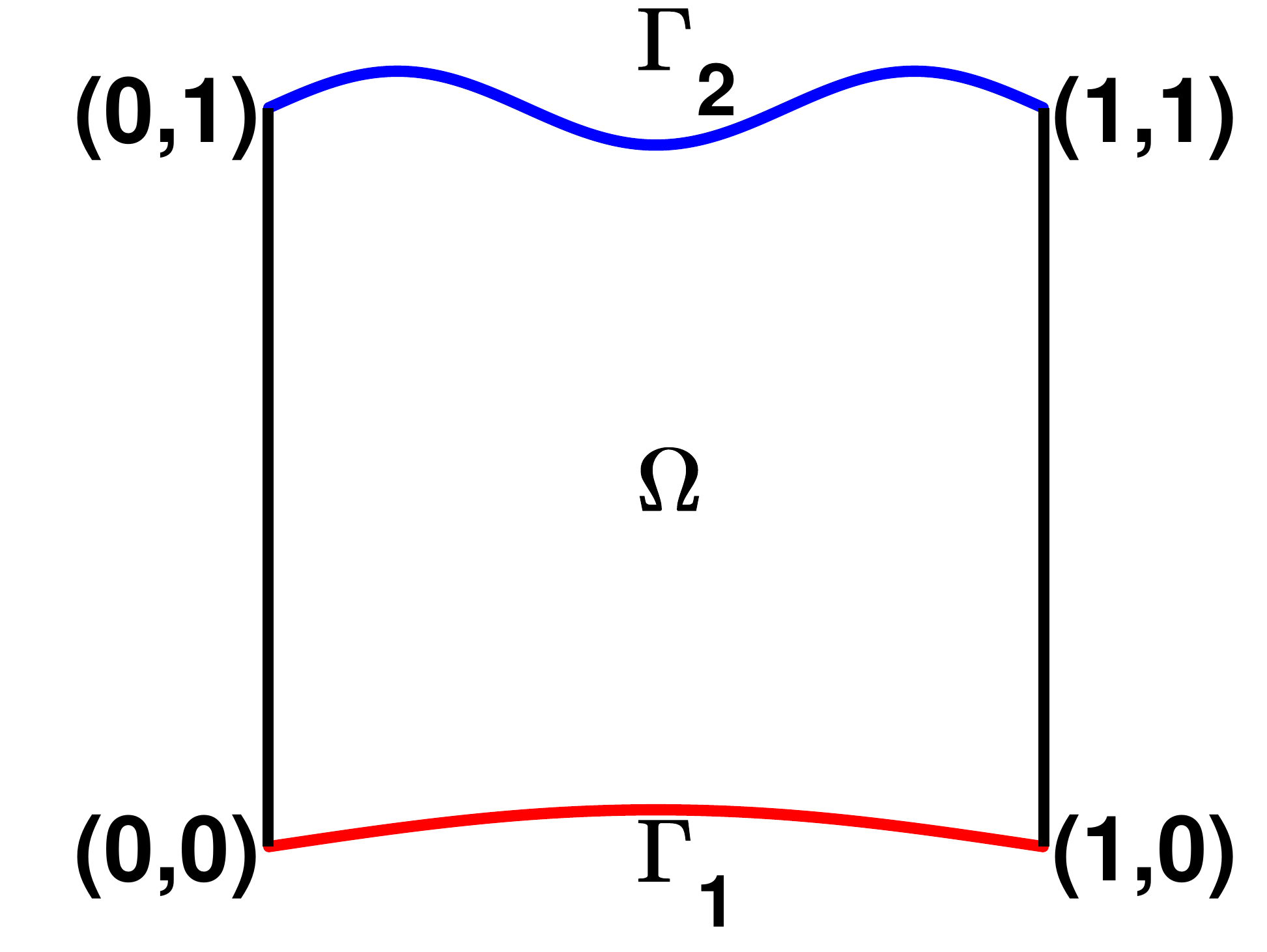}
\caption{Domain~$\Omega$ described in \eqref{eqn:sindomain}.}
\label{figure:test-case-2domain}
\end{center}
\end{figure}
\begin{figure}[H]
\begin{center}
\includegraphics[width=2.5in]{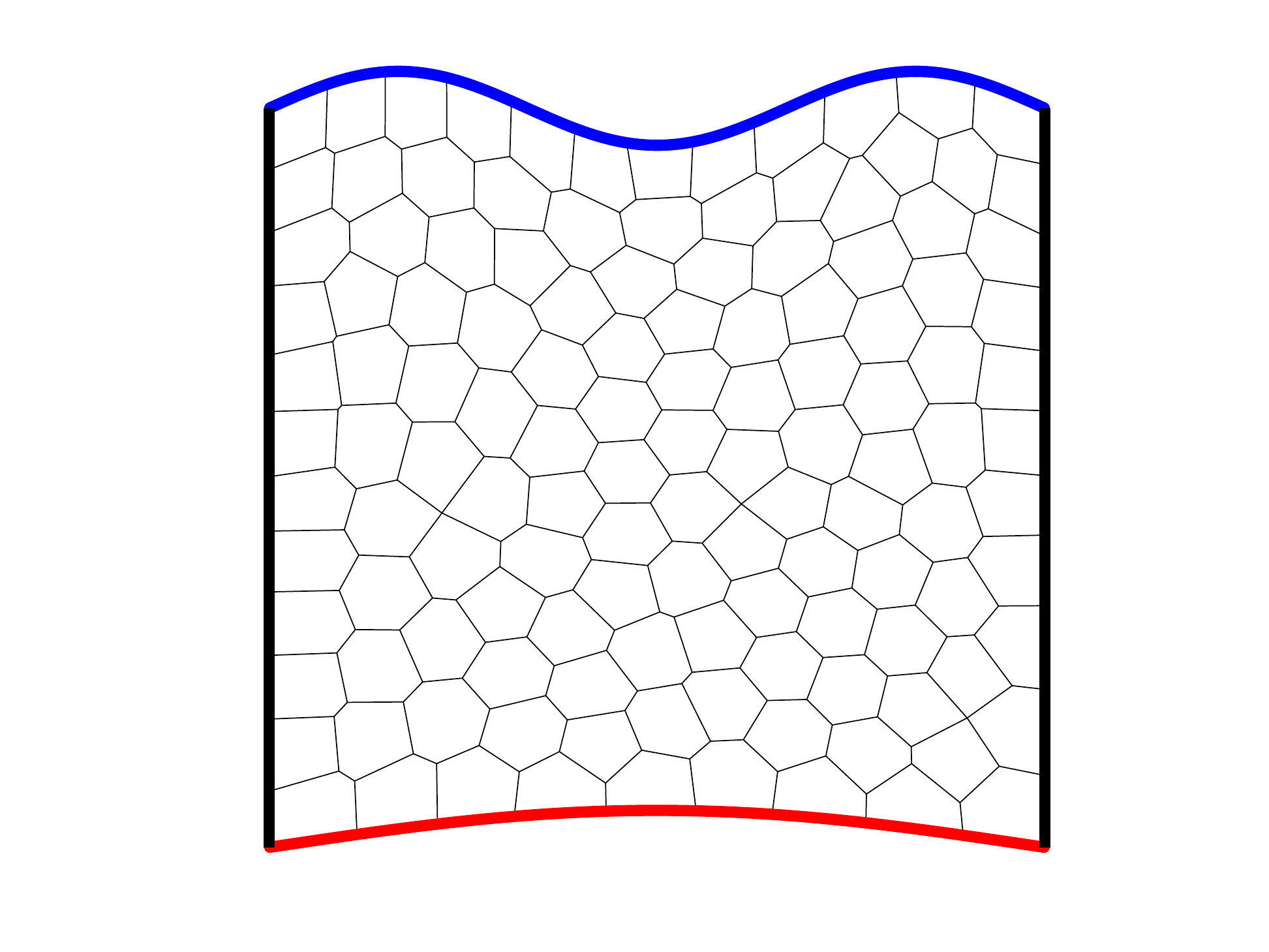}
\includegraphics[width=2.5in]{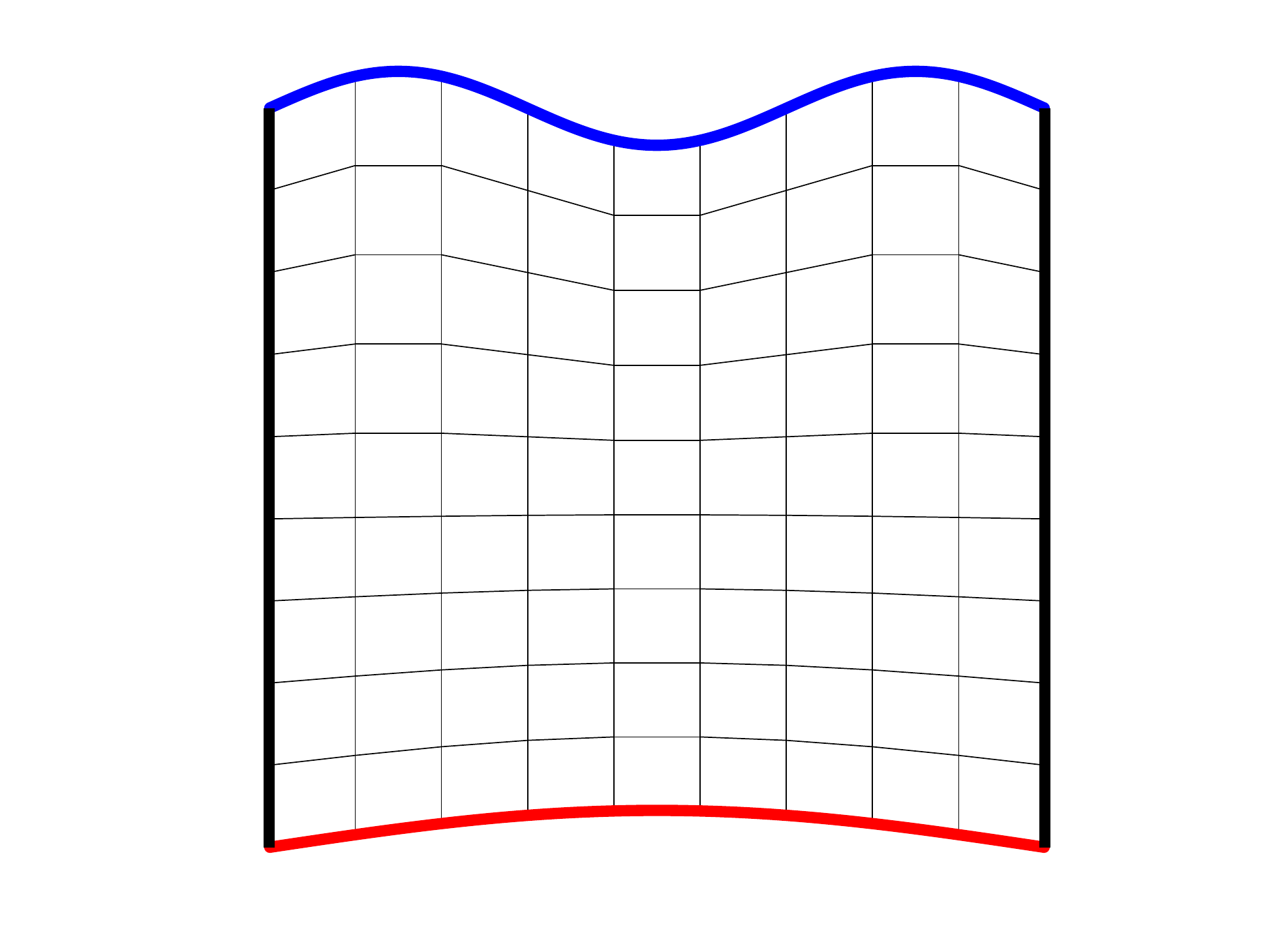}
\caption{Left-panel: an example of (curved) Voronoi mesh over~$\Omega$.
Right-panel: an example of (curved) quadrilateral mesh over~$\Omega$.}
\label{figure:test-case-2}
\end{center}
\end{figure}

The finite element partition on the curved domain~$\Omega$
is constructed starting from a mesh for the square $[0,1]^2$ and mapping the nodes accordingly to the following rule:
\[
(x_{\Omega},y_\Omega)=
\begin{cases}
(x_s,y_s+g_1(x_s)(1-2y_s)),   &\text{if}~ y_s \leq \frac12 ,\\
(x_s, 1-y_s+g_2(x_s)(2y_s-1), & \text{if}~ y_s \geq \frac12
\end{cases}
\]
Above, $(x_s, y_s)$
denotes the mesh generic node on the square domain~$(0,1)^2$, and~$(x_\Omega,y_\Omega)$
denotes the associated node in the curved domain~$\Omega$.
The edges on the curved boundary consist of
an arc of~$\Gamma_1$ or~$\Gamma_2$, while all the internal edges are straight. 
In Figure~\ref{figure:test-case-2},
we display two examples of meshes,
namely, a (curved) Voronoi and a (curved) square mesh.

In Figure~\ref{figure:convergence-test-case-2},
we show the convergence of the two error quantities in~\eqref{computable-quantities}
on the given sequences of meshes under uniform mesh refinements
for ``orders'' $k=2$, $3$, and~$4$.

\begin{figure}[H]
\begin{center}
\includegraphics[width=2.5in]{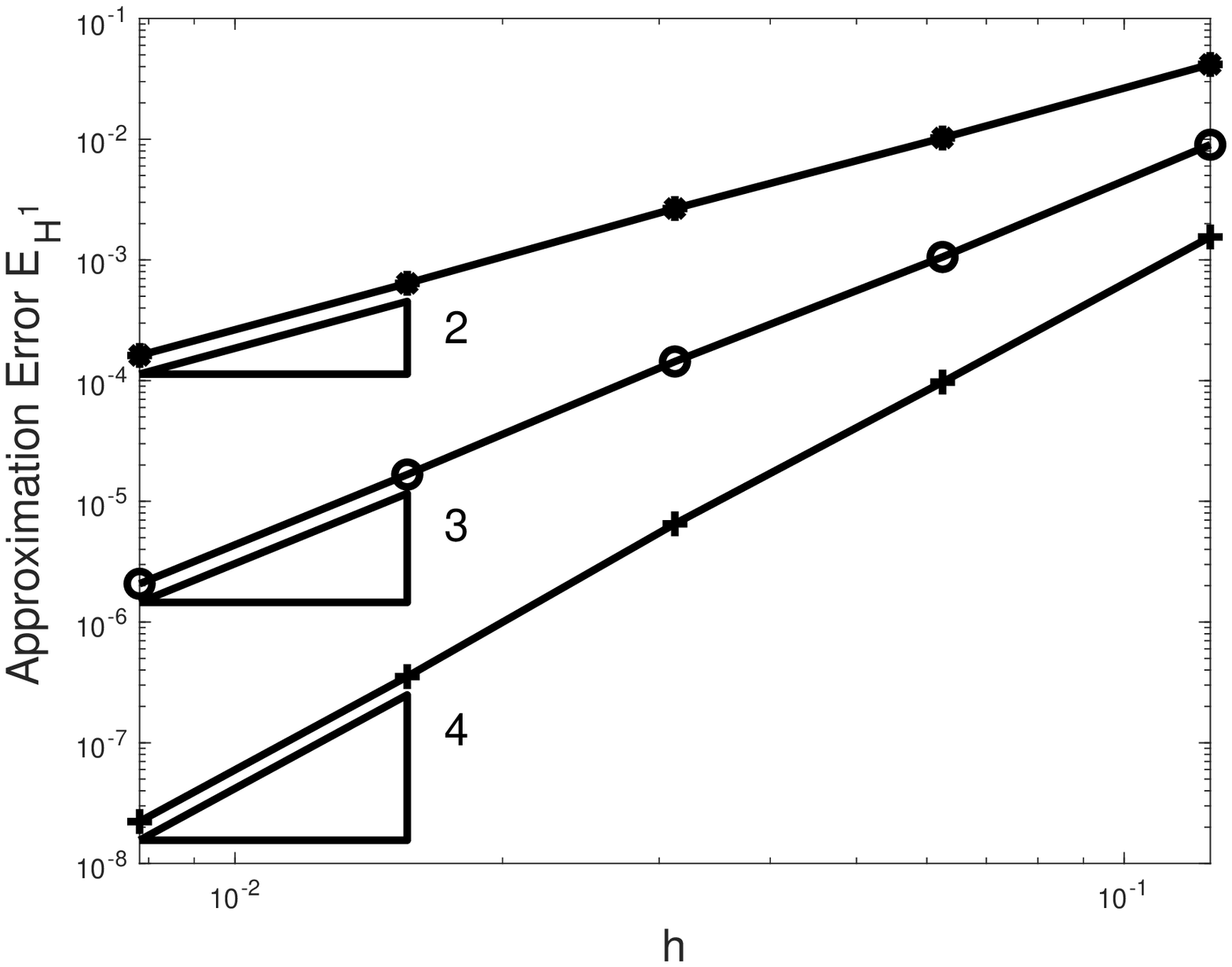} 
\includegraphics[width=2.5in]{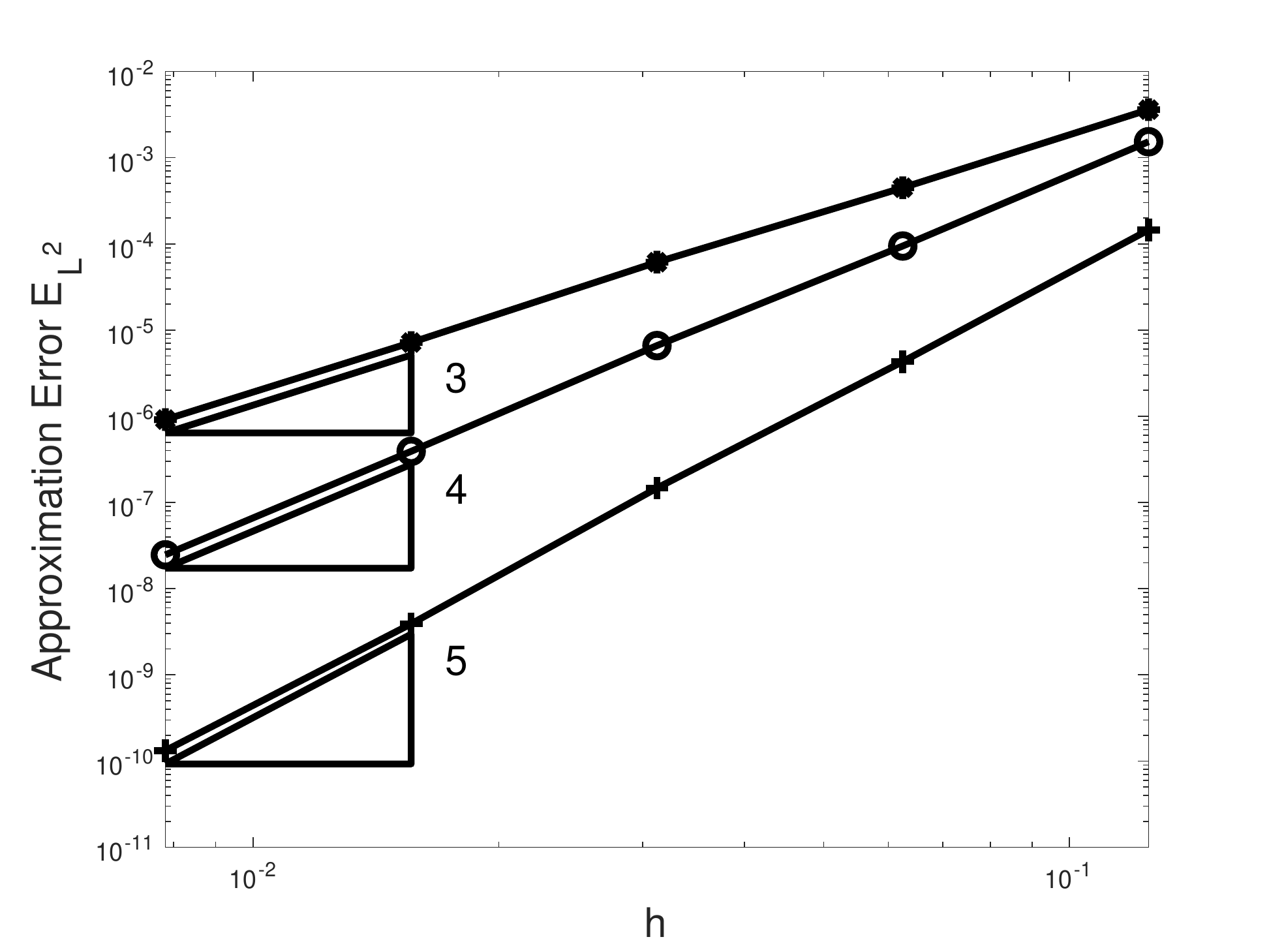}
\includegraphics[width=2.5in]{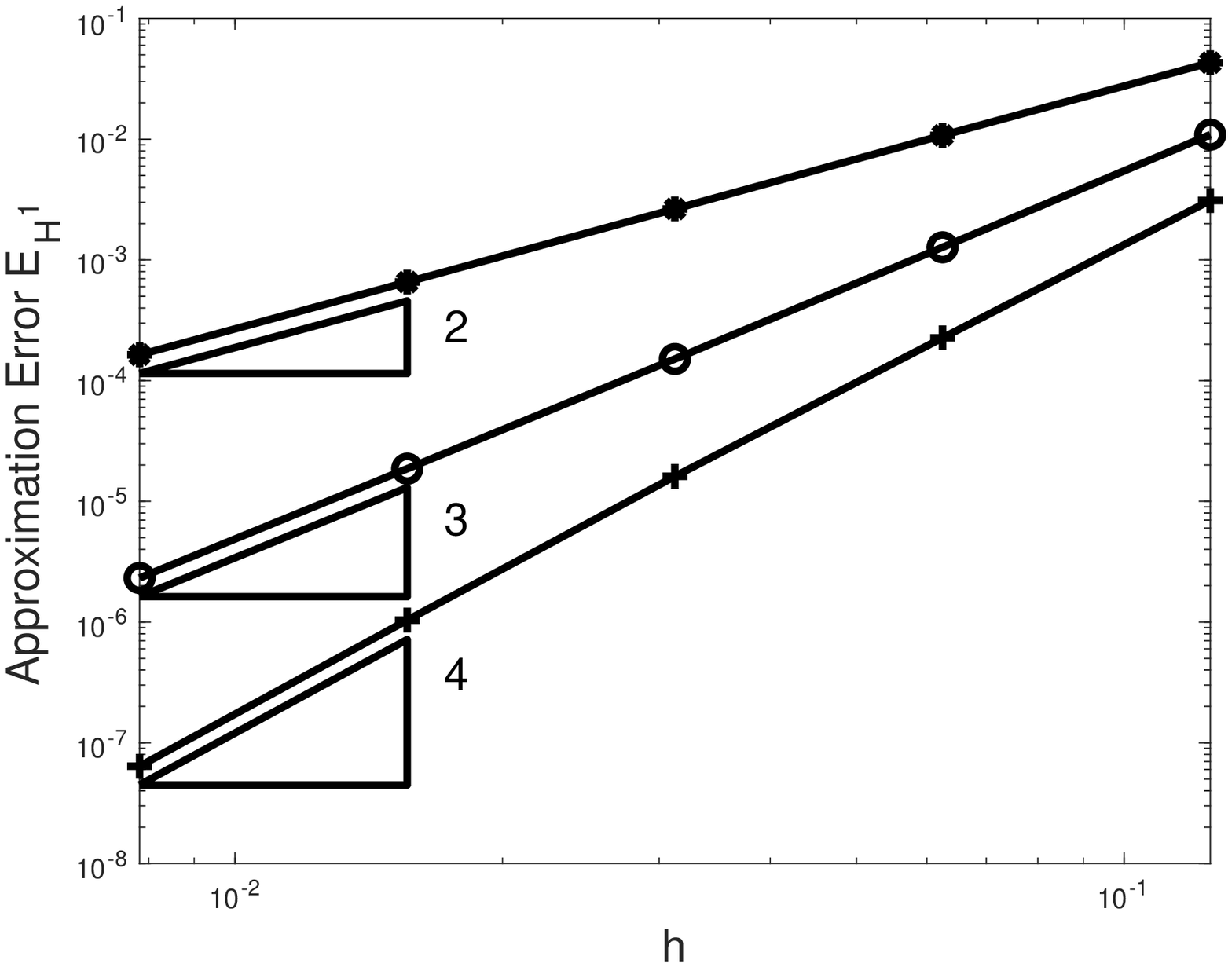} 
\includegraphics[width=2.5in]{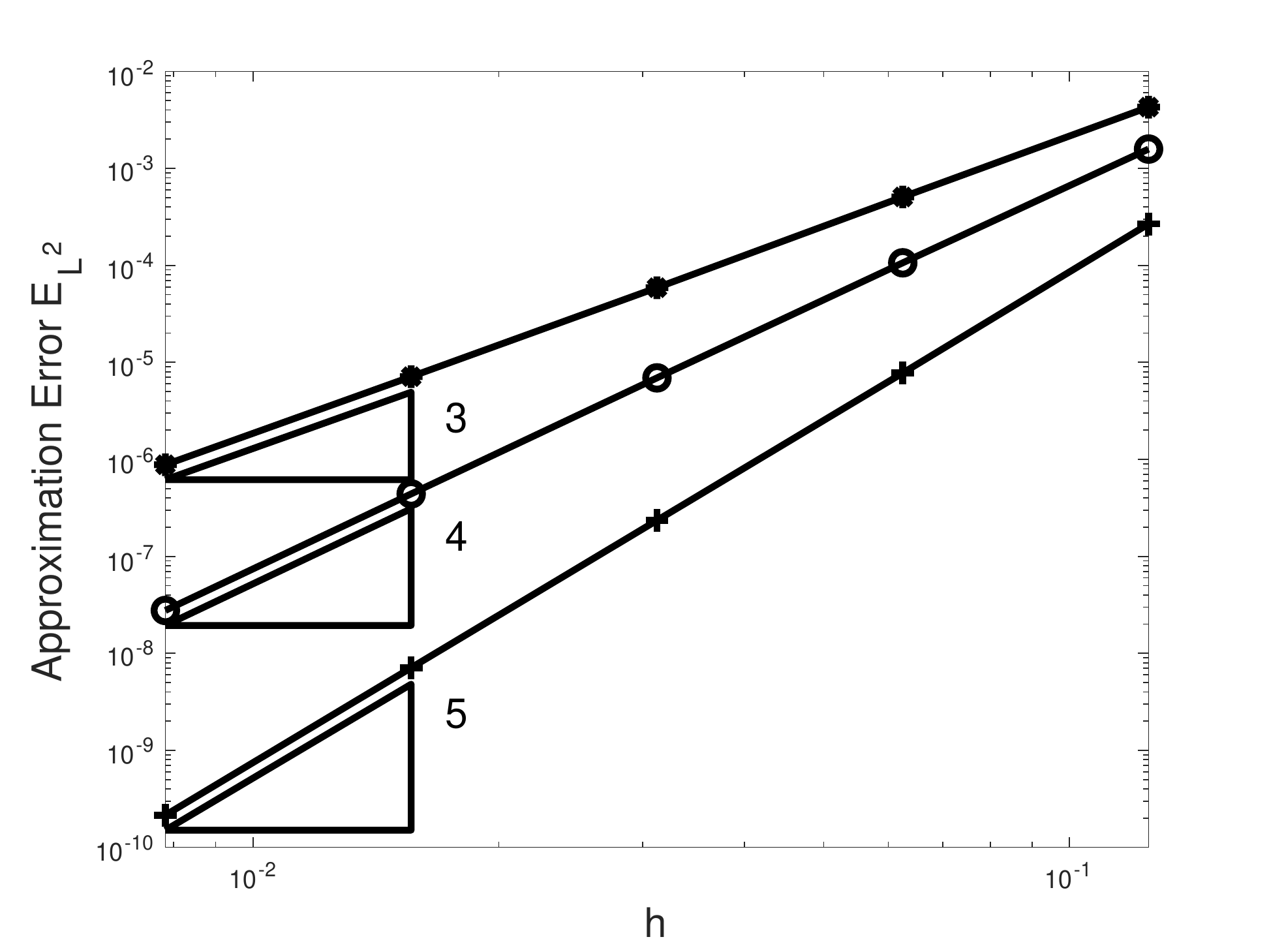}
\caption{\small{Left-panels: convergence of~$E_{H^1}$.
Right-panels: convergence of~$E_{L^2}$.
The exact solution is~$u_2$.
We employ sequences of (curved)} Voronoi meshes (first row)
and (curved) square meshes (second row)
with decreasing mesh size are employed.
The ``orders'' of the virtual element spaces are
$k=2$, $3$, and~$4$.}
\label{figure:convergence-test-case-2}
\end{center}
\end{figure}

The theoretical predictions of Section \ref{section:convergence} are confirmed:
convergence of order~$\mathcal O(h^k)$
and~$\mathcal O(h^{k+1})$ is observed for the energy and $L^2$-type errors in~\eqref{computable-quantities}.

Next, we approximate the curved domain by using a polygonal mesh sequence
of elements with straight edges.
Notably, we approximate
the curved boundary by straight segments and
force homogeneous Dirichlet boundary conditions;
see Figure~\ref{figure:straightmesh-test-case-2}.

In Figure \ref{figure:weak-convergence-test-case-2} we plot the results for the sequence of Voronoi meshes on the approximated domain, obtained with the standard nonconforming VEM on polygons.

\begin{figure}[H]
\begin{center}
\includegraphics[width=2.5in]{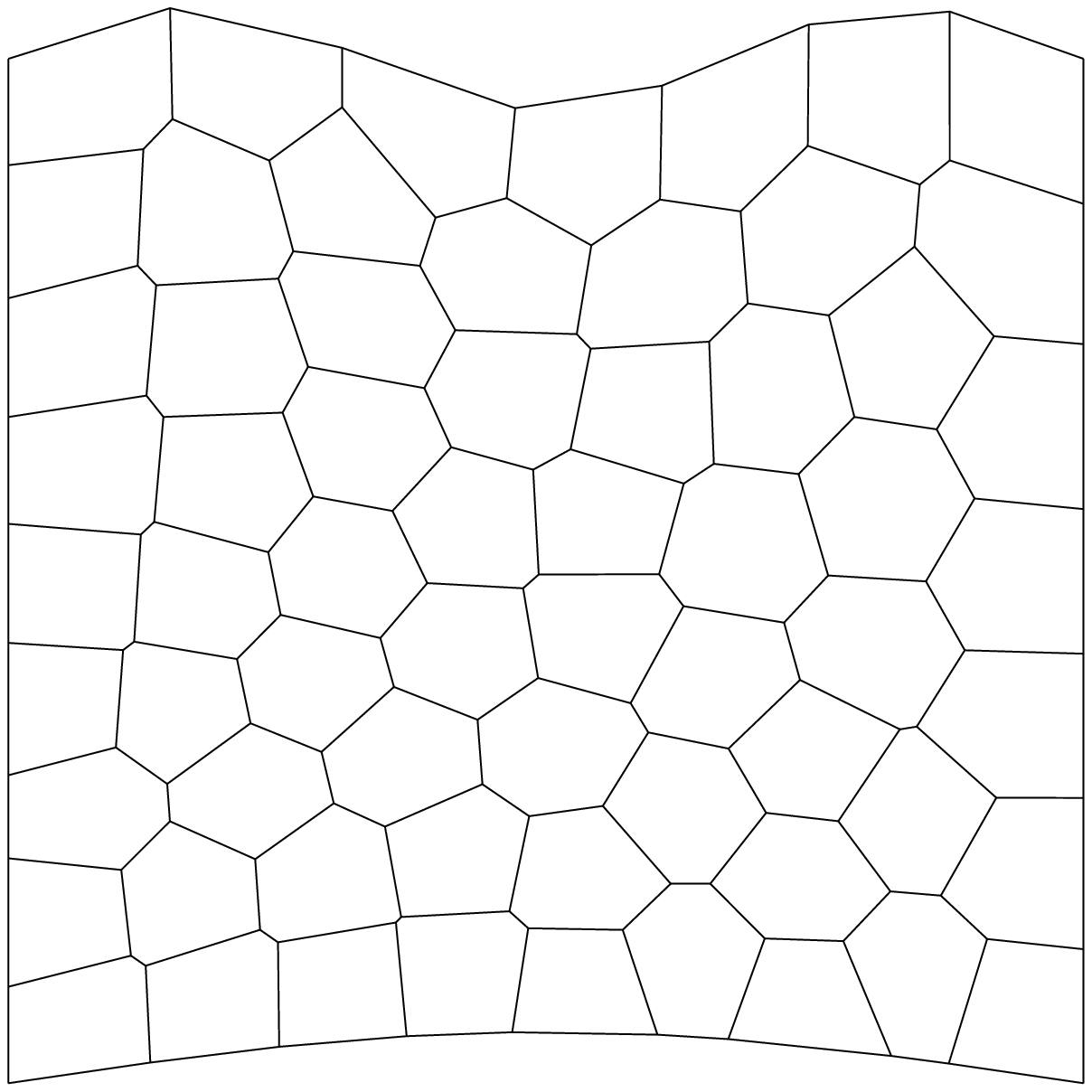} 
\caption{\small{An example of (straight) Voronoi mesh over~$\Omega$.}}
\label{figure:straightmesh-test-case-2}
\end{center}
\end{figure}
\begin{figure}[H]
\begin{center}
\includegraphics[width=2.5in]{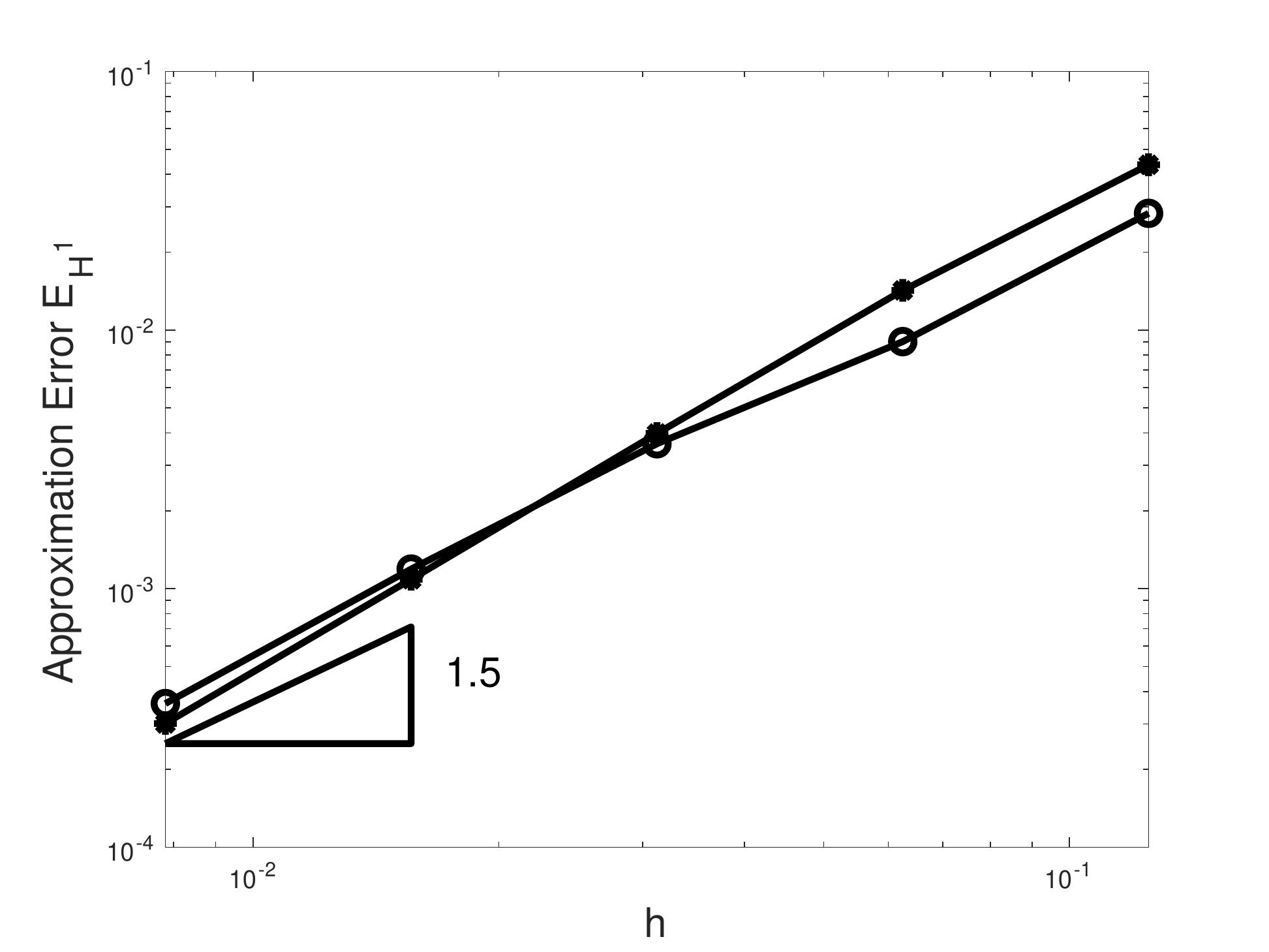} 
\includegraphics[width=2.5in]{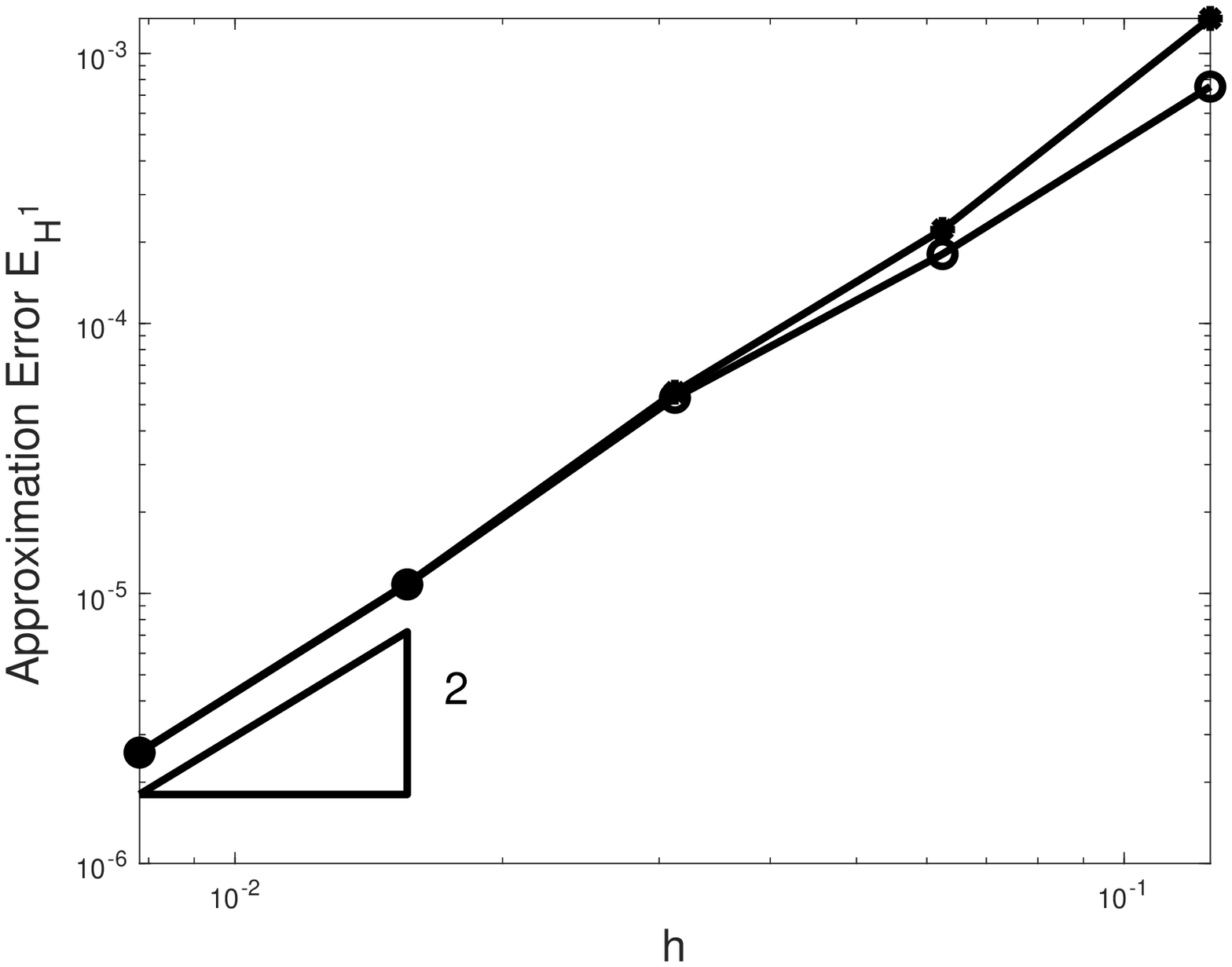}
\caption{\small{Left-panel: convergence of~$E_{H^1}$.
Right-panel: convergence of~$E_{L^2}$.
The exact solution is~$u_2$.
Voronoi meshes with decreasing mesh size are employed.
The ``orders'' of the virtual element spaces are
$k=2$, $3$.}}
\label{figure:weak-convergence-test-case-2}
\end{center}
\end{figure}

In this case, we observe that the geometrical error dominates
and the rate of convergence is approximately
1.5 and 2 for the energy and $L^2$ type
approximate errors in~\eqref{computable-quantities}.

\subsection{Curved interfaces}
\label{subsection:curved-interface}
As a third test case, see~\cite{BeiraodaVeiga-Russo-Vacca:2019},
we consider the same circular domain~$\Omega$ with boundary~$\Gamma_2$ as in Section~\ref{subsection:curved-boundary},
and we split into two subdomains~$\Omega_1$ and~$\Omega_2$
by an internal interface~$\Gamma_1$
so as~$\Omega_2$ has half the radius of~$\Omega$;
see Figure~\ref{figure:test-case-3} (left-panel).
Further, we are given a diffusion coefficient~$\kappa$
and a loading term~$f$ piecewise defined as
\[
\begin{cases}
\kappa = 1 & \text{in } \Omega_1,\\
\kappa = 5 & \text{in } \Omega_2,
\end{cases}
\qquad \qquad
\begin{cases}
f = 5  & \text{in } \Omega_1,\\
f = 1  & \text{in } \Omega_2,
\end{cases}
\]
We are interested in approximating the solution $u_2$ to the elliptic problem
\[
\begin{cases}
-{\mathrm{div}} (\kappa \nabla u) =f  & \text{in } \Omega,\\
u =0                                  & \text{on } \Gamma_2,
\end{cases}
\]
which is given by (here $r:= \sqrt{x^2+y^2}$)
\[
u_3 (x,y) =
u_3 (r) :=
\begin{cases}
-\frac{5}{4}r^2 + \frac{7}{20} + \frac{\ln(2)}{10}
& \text{if }  r \le 1/2,\\
-\frac{1}{20}r^2 - \frac{1}{10}\ln(r) + \frac{1}{20}
& \text{if } 1/2<r<1.
\end{cases}
\]
The function~$u_3$ is analytic
in~$\Omega_1$ and~$\Omega_2$
but has finite Sobolev regularity across the interface~$\Gamma_1$,
see Figure \ref{figure:test-case-3}).

\begin{figure}[H]
\begin{center}
\includegraphics[width=2.5in]{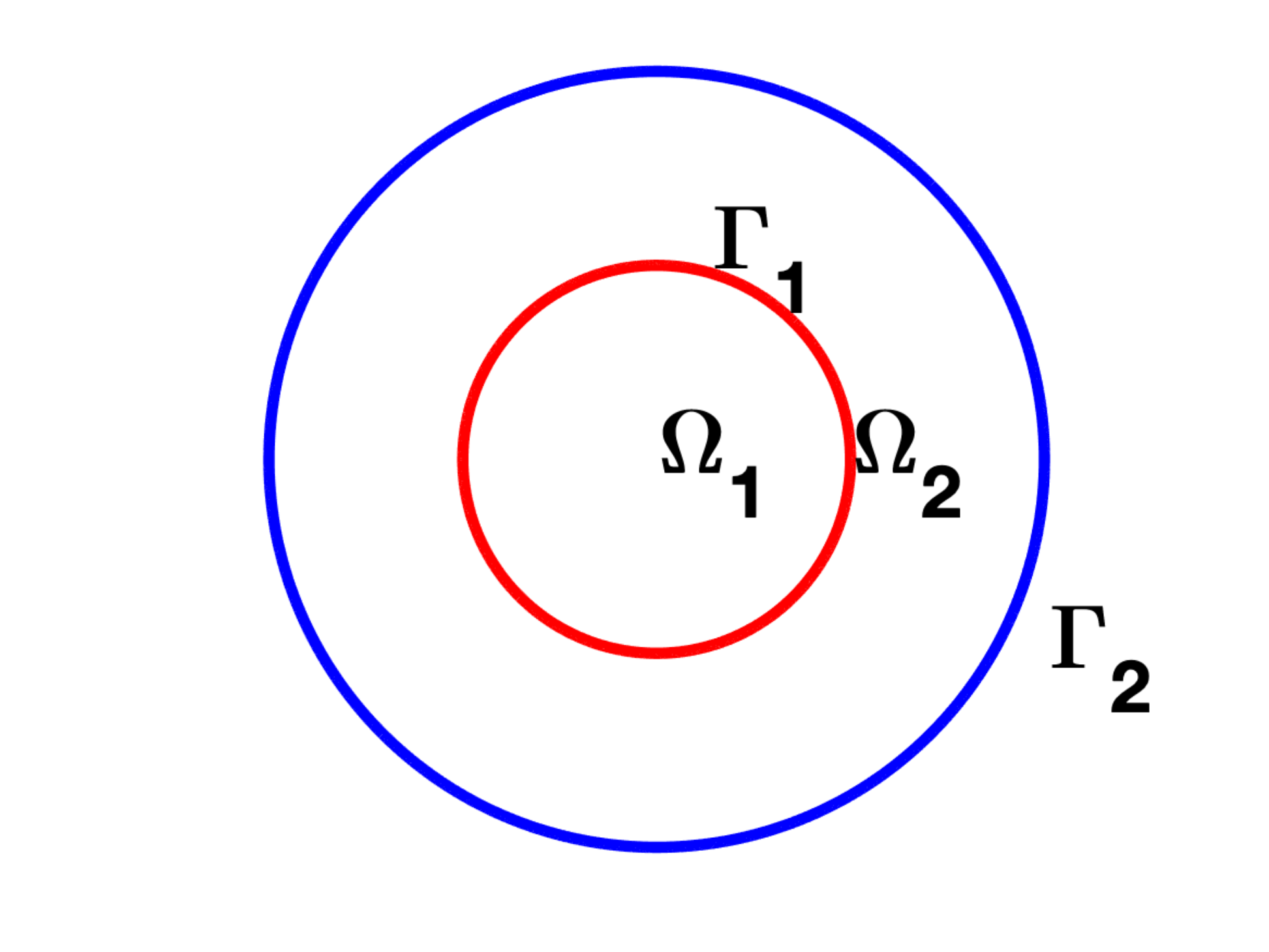} 
\includegraphics[width=2.5in]{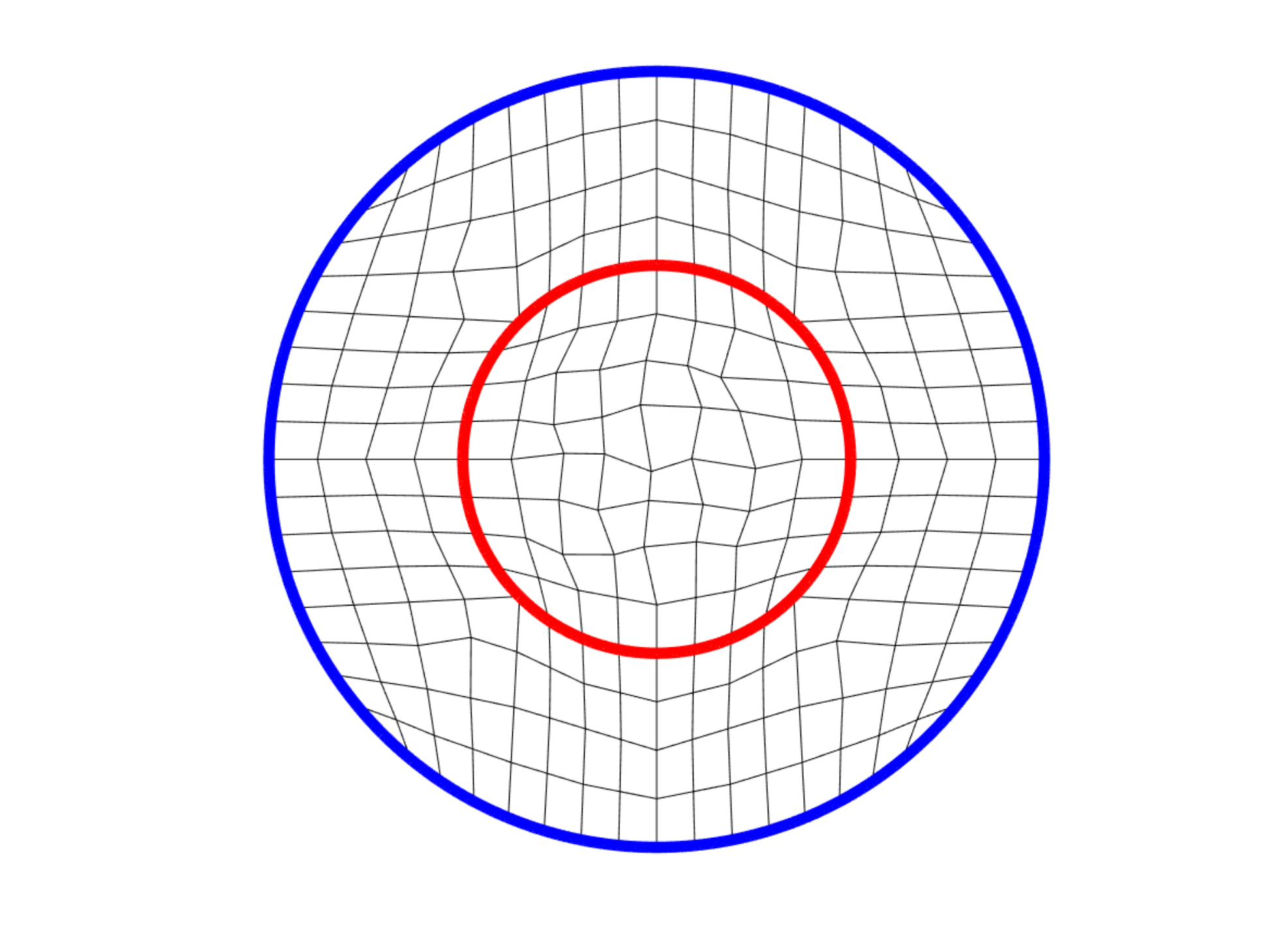}
\caption{\small{
Left-panel: a circular domain~$\Omega$ with a curved internal interface~$\Gamma_1$.
Right-panel: an example of (curved) mesh conforming with respect to the internal interface.}}
\label{figure:test-case-3}
\end{center}
\end{figure}

In Figure~\ref{figure:convergence-test-case-3},
we show the convergence of the two quantities in~\eqref{computable-quantities} on the given sequence of meshes with decreasing mesh size
that are conforming with respect to the curved internal interface~$\Gamma_1$;
see Figure~\ref{figure:test-case-3} (right-panel).
We consider virtual elements of ``order'' $k=2$, $3$, and~$4$.

\begin{figure}[H]
\begin{center}
\includegraphics[width=2.5in]{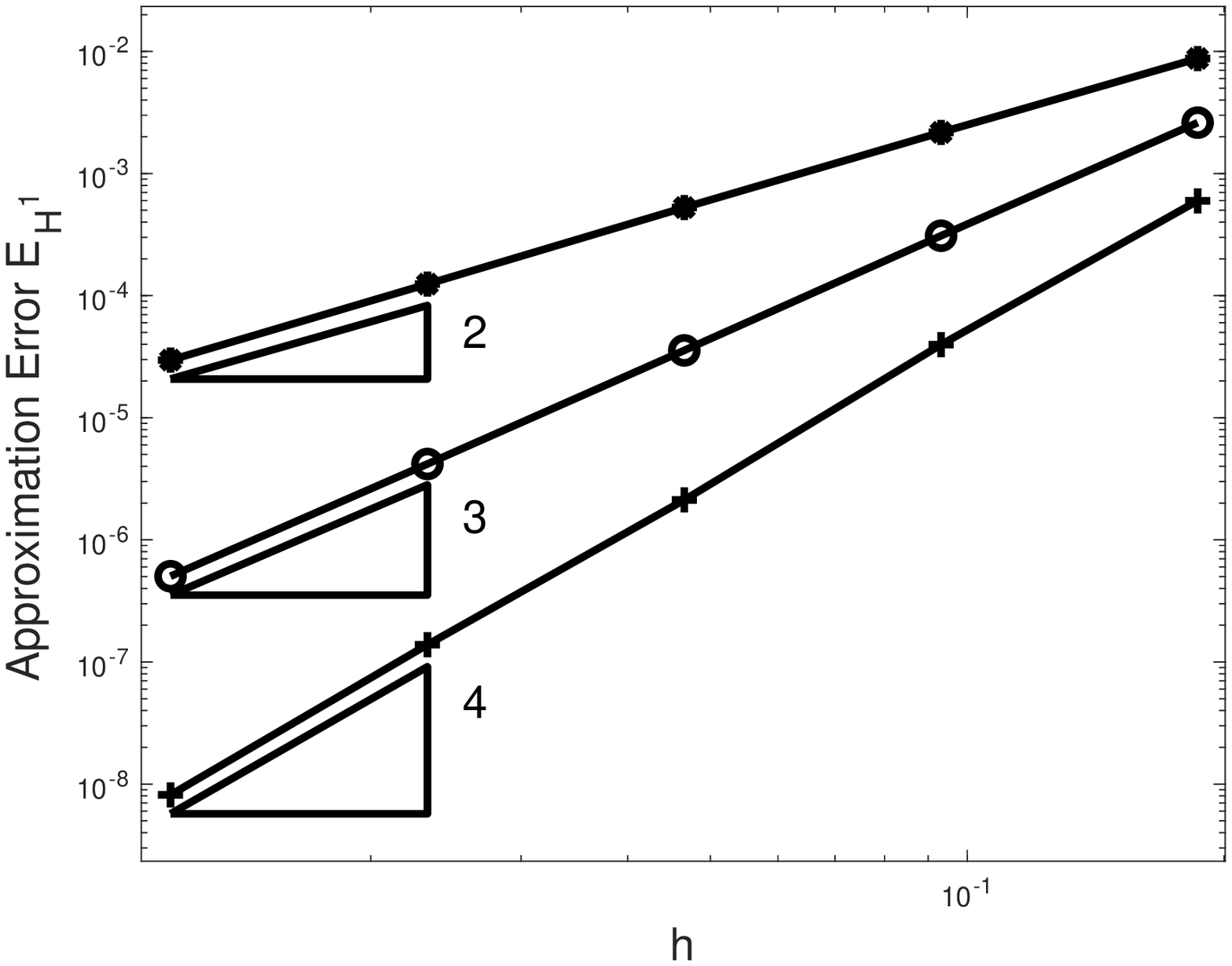}
\includegraphics[width=2.5in]{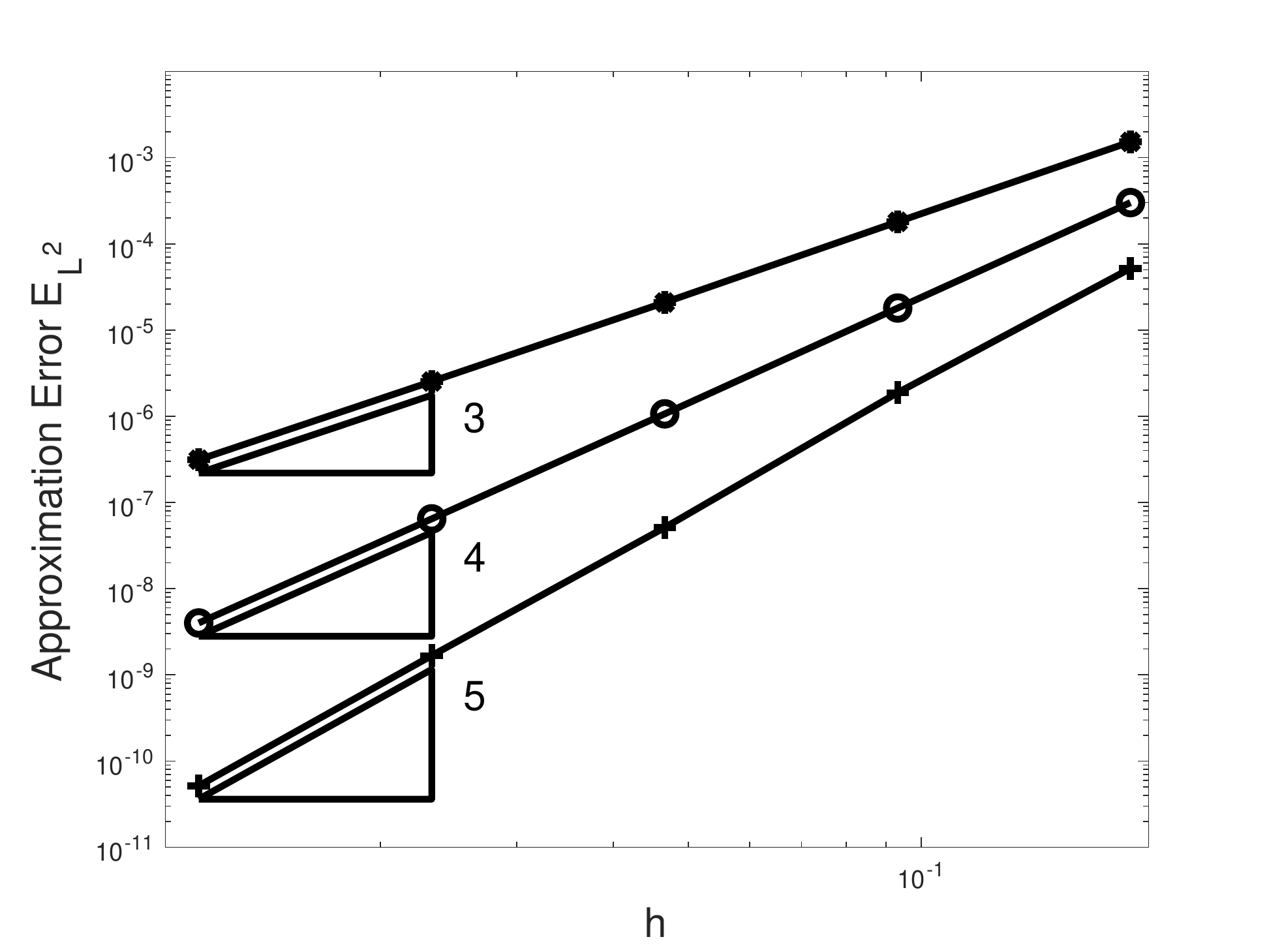}
\caption{\small{Left-panel: convergence of~$E_{H^1}$.
Right-panel: convergence of~$E_{L^2}$.
The exact solution is~$u_3$.
Meshes with decreasing mesh size are employed
that are conforming with respect to the internal interface~$\Gamma_1$.
The ``orders'' of the virtual element spaces are
$k=2$, $3$, and~$4$.}}
\label{figure:convergence-test-case-3}
\end{center}
\end{figure}

The theoretical predictions of Section~\ref{section:convergence} are confirmed
also for domains with internal curved interface:
convergence of order~$\mathcal O(h^k)$
and~$\mathcal O(h^{k+1})$ is observed for the energy and $L^2$-type errors in~\eqref{computable-quantities}.


\paragraph*{Acknowledgment}
Y.L. is supported by the NSFC grant 12171244 and China Scholarship Council 202206860034.
L. Beir\~ao da Veiga was partially supported by the Italian MIUR through the PRIN Grant No. 905 201744KLJL.

\bibliographystyle{plain}
{\footnotesize\bibliography{nc_vem.bib}}
\end{document}